\theoremstyle{plain}
\newtheorem{thm}{Theorem}[section]
\newtheorem{cor}[thm]{Corollary}
\newtheorem{lemma}[thm]{Lemma}
\newtheorem{prop}[thm]{Proposition}
\theoremstyle{definition}
\newtheorem{defi}[thm]{Definition}
\newtheorem{rem}[thm]{Remark}
\numberwithin{equation}{section}
\newcommand{\udc}{\check{u}_{\delta}}
\newcommand{\uderh}{\hat{u}_{\delta,\epsilon,R}}
\newcommand{\uder}{u_{\delta,\epsilon,R}}
\newcommand{\Fde}{F_{\delta,\epsilon}}
\DeclareMathOperator{\vp}{\varphi}
\DeclareMathOperator{\PP}{\mathbb{P}}
\DeclareMathOperator{\EE}{\mathbb{E}}
\DeclareMathOperator{\NN}{\mathbb{N}}
\DeclareMathOperator{\MM}{\mathcal{M}}
\DeclareMathOperator{\sgn}{sgn}
\DeclareMathOperator{\TT}{\mathbb{T}}
\DeclareMathOperator{\RR}{\mathbb{R}}
\DeclareMathOperator{\ZZ}{\mathbb{Z}}
\DeclareMathOperator{\X}{\mathcal{X}}
\newcommand{\WW}{\mathcal{W}}
\newcommand{\cO}{\color{black}}
\definecolor{OliveGreen}{gray}{0}
\title{Solutions to the stochastic thin-film equation for the range of mobility exponents $n\in (2,3)$}
\author{Max Sauerbrey\textsuperscript{1}}
\thanks{\textsuperscript{1}Max Planck Institute for Mathematics in the Sciences,	Inselstr.\!\!\! 22,	04103 Leipzig,	Germany  (\href{mailto:maxsauerbrey97@gmail.com}{maxsauerbrey97@gmail.com}). This research was conducted while the author was employed at Delft University of Technology, 2628 CD Delft, Netherlands}
\keywords{Thin-Film Equation, Noise, {\color{OliveGreen}Energy Estimate, $\alpha$-Entropy estimates}}
\subjclass[2020]{35R60, 76A20} 
\begin{document}
	\maketitle
	\begin{abstract}
		Recently, many existence results for the stochastic thin-film equation were established in the case of a quadratic mobility exponent $n=2$, in which the noise term $\partial_x(u^\frac{n}{2}\mathcal{W})$ becomes linear. In the case of a non-quadratic mobility exponent, results are only available in the situation that $n\ge \frac{8}{3}$ leaving the interval of mobility exponents $n\in (2,\frac{8}{3})$ untreated. In this article we resolve the current gap in the literature by presenting a proof, which works under the assumption $n\in (2,3)${\cO, i.e., the regime of weak slippage}. The key idea is to use that the $\log$-entropy dissipation coincides with the energy production due to the noise. To realize this idea, we approximate the stochastic thin-film equation by stochastic thin-film equations with inhomogeneous mobility functions{\color{OliveGreen}, which behave like a higher power near $0$}. As a consequence the approximate solutions are non-negative, which is vital to use the $\log$-entropy estimate.
	\end{abstract}

	\tableofcontents
	{\cO 
	\section{Introduction and statement of the results}
	The} question of existence of martingale solutions to the stochastic thin-film equation
	\begin{equation}\label{Eq100}
		\partial_t u \,=\,  -\partial_x (u^n \partial_x^3u )\,+\, \partial_x(u^\frac{n}{2}\WW)
	\end{equation}
	has attracted much interest during the last years. A solution $u$ to this stochastic partial differential equation describes the evolution of the height of a thin liquid film driven by surface tension and thermal noise.  The appearing parameter $n\in \RR_{>0}$ is called mobility exponent and the corresponding nonlinearity $u^n$ the mobility function of \eqref{Eq100}. {\cO It reflects the boundary condition imposed on the fluid velocity near the substrate, where, e.g., $n=3$ corresponds to a no-slip condition and $n=2$ to a Navier slip condition. The range $n\in (2,3)$ arises from a variable slip length, which decays as the film height approaches $0$ and is called the regime of weak slippage.} We remark that a physically meaningful solution to \eqref{Eq100} is non-negative, {\cO so} that there is no ambiguity in the interpretation of these nonlinearities. The variable $\WW$ denotes a {\cO Gaussian }random field, which is white in time. 
	
	A derivation of \eqref{Eq100} was given in \cite{DMES2005} using the fluctuation dissipation relation. Similarly, in \cite{GruenMeckeRauscher2006} a lubrication approximation was used to derive 
	\begin{equation}\label{Eq101}
		\partial_t u \,=\,  -\partial_x (u^n \partial_x(\partial_x^2 u - g'(u) ) )\,+\, \partial_x(u^\frac{n}{2}\WW)
	\end{equation}
	with spatio-temporal white noise $\WW$. We remark that the SPDE \eqref{Eq101} differs from
	\eqref{Eq100}  by the additional term 
	\[\partial_x (u^n\partial_x (g'(u))),\] 
	which accounts for interaction forces between the molecules of the fluid and the substrate. In these works numerical simulations were used to show that the presence of the noise term leads to faster spreading behavior and accelerated film rupture, compared to the corresponding deterministic equations. In particular, as pointed out in \cite{GruenMeckeRauscher2006}, the latter could be a possible explanation for discrepancies between simulations of the deterministic equation and the
	physical experiments using polystyrene films of height $\approx 4nm$ observed in \cite{becker2003complex}. We point out that precisely this regime of very small film heights is the one, where thermal fluctuations start to become relevant. For  information on the underlying gradient flow structure and  thermodynamically consistent simulations  of \eqref{Eq100} we refer to \cite{GGKO21}. Moreover, a derivation of an appropriate counterterm and corresponding estimates towards a {\color{OliveGreen}treatment} of \eqref{Eq100} in the framework of diagram-free regularity structures  can be found in  \cite{gvalani2023stochastic}. We remark that while in the previously mentioned works on  stochastic thin films the random field $\mathcal{W}$ is white in space and time, the following {\cO preprints and} articles, concerned with the problem of existence {\cO of martingale solutions to}  \eqref{Eq100} and \eqref{Eq101}, all impose the condition that $\WW$ is spatially correlated.

	In the pioneering work \cite{fischer_gruen_2018} existence of martingale solutions to the It\^o interpretation of \eqref{Eq101}	
	was shown.  In particular, in \cite{fischer_gruen_2018} the corresponding interface potential $g$ is assumed to be sufficiently singular at $0$ to enforce $u$ to stay away from $0$. Then, in \cite{GessGann2020} existence of martingale solutions to the Stratonovich interpretation of \eqref{Eq100} in the quadratic case $n=2$ was shown. The main advance in this work is the observation that the Stratonovich SPDE
	\[
		\partial_t u \,=\,  \partial_x(u \WW)
	\]
	can be solved independently, which allows for a Trotter-Kato type decomposition of the deterministic and stochastic dynamics of \eqref{Eq100}. This approach allows in particular for initial values which are $0$ on sets of positive measure and thus the presence of a contact line, i.e. the triple junction of air, liquid and solid. Subsequently, in \cite{dareiotis2021nonnegative} the question of existence to \eqref{Eq100}  for non-quadratic mobility exponents was addressed and could be answered positively for $n\in [\frac{8}{3},4)$. Although the Stratonovich interpretation is used there too, the result in \cite{dareiotis2021nonnegative} requires the initial  value to be positive almost everywhere. 
	The reason for this is that the presence of the nonlinear noise term in \eqref{Eq100} for $n=2$ requires some control on the smallness of $u$ for all times to be estimated.
	
	Afterwards, the question of existence to the physically relevant, two-dimensional analogues of \eqref{Eq100} and \eqref{Eq101}  were addressed in \cite{metzger2022existence} and \cite{Sauerbrey_2021}, both in the case of a quadratic mobility exponent $n=2$. These works can be seen as extensions of \cite{fischer_gruen_2018} and \cite{GessGann2020}, respectively, to the two-dimensional setting. 
	Moreover, also for $n=2$, it was shown in \cite{KleinGruen22} that solutions to the Stratonovich interpretation of \eqref{Eq100} can be obtained as a limit of solutions to \eqref{Eq101} with vanishing  interface potential $g$. Compared to \cite{GessGann2020}, the authors also prove additional {\cO $\alpha$-entropy }estimates, which imply that the solutions admit a $0$-contact angle for all times at the triple junction of air, liquid and gas. The question of existence to \eqref{Eq100} for a non-quadratic mobility exponent and a non-fully supported initial value was answered positively in \cite{dareiotis2023solutions}, at least {\cO for} $n\in [\frac{8}{3},3)$. The result comes at the expense of passing to a weaker notion of solutions compared to the previous results in one dimension. Most recently, existence of solutions to a lower order perturbation of \eqref{Eq100} with $n=2$ was proved in \cite{kapustyan2023film} using a time splitting scheme as in \cite{GessGann2020}.
	
	This review of the current existence literature on \eqref{Eq100} and \eqref{Eq101} shows that, while positive results could be obtained for $n=2$ and particular $n\ge \frac{8}{3}$, the situation for $n\in (2,\frac{8}{3})$ is left open. In the current article we alleviate this gap in the  literature and provide an existence proof of martingale solutions to \eqref{Eq100}, which applies for $n\in (2,3)$.
	
	{\cO \subsection{Relevant a-priori estimates}\label{SS_apriori}
	The discussed existence results concerning martingale solutions of \eqref{Eq100} as well as the results obtained in the current article require uniform estimates on suitable regularized versions of  \eqref{Eq100}. In this subsection, we recall the relevant a-priori estimates, which hold for strictly positive and smooth solutions to the deterministic thin-film equation
	\begin{equation}\label{Eq103a}
		\partial_t u \,=\,  -\partial_x (u^n \partial_x^3u ).
	\end{equation}
	By integrating \eqref{Eq103a} against $-\partial_x^2 u$, one obtains the \emph{energy estimate}
	\begin{equation}\label{Eq105a}
		\frac{1}{2}\partial_t \|\partial_x u \|_{L_x^2}^2\,=\, -\|u^\frac{n}{2}\partial_x^3 u\|_{L_x^2}^2,
	\end{equation}
	while integration against $\int^u r^{-n} \, dr$ yields the \emph{entropy estimate}
	\begin{equation}\label{Eq11}
		\partial_t \int \biggl(\int^u \int^{r} (r')^{-n} \, dr' \, dr\biggr) \, dx \,=\, -\int (\partial_x^2 u )^2 \, dx.
	\end{equation}
	Replacing the exponent $-n$ by $\alpha-1$ results for $\alpha \in [1/2-n, 2-n]$ in the \emph{$\alpha$-entropy estimate}
	\begin{equation}\label{Eq_AE_Est}
		\partial_t \int \biggl(\int^u \int^{r} (r')^{\alpha-1} \, dr' \, dr\biggr) \, dx \,=\, -\frac{1}{\gamma^2} \int u^{\alpha+n-2\gamma+1} (\partial_x^2 u^\gamma )^2 \, dx \,-\, c(\alpha+n,\gamma)\int u^{\alpha+n-3} (\partial_x u)^4 \, dx,
	\end{equation}
	where $\gamma>0$ can be chosen such that $c(\alpha+n,\gamma)\ge 0$ and even $c(\alpha+n,\gamma)> 0$ whenever $\alpha \in (1/2-n, 2-n)$,  see \cite[Proposition 2.1]{Beretta_Bertsch_DalPasso_95}.
	
	All  these estimates will play a role in our approximation procedure for  \eqref{Eq100}. As observed in \cite{dareiotis2021nonnegative,dareiotis2023solutions} the noise term \eqref{Eq100}, even when interpreted in Stratonovich form, leads to difficulties in closing stochastic versions of these a-priori estimates. Specifically, deriving a version for  the energy estimate in the nonlinear noise case $n\ne 2$ is challenging. We achieve this in the current manuscript using the $\alpha$-entropy estimate with $\alpha=-1$, to which we refer to as \emph{$\log$-entropy estimate}. 
	}
	\subsection{Result}
	{\cO We proceed to formulate the main result of this article. Namely, }
	 we consider the Stratonovich interpretation of \eqref{Eq100} on the one-dimensional torus $\TT$ with unit length.
	To state our our assumption on the spatial smoothness of the noise {\cO we follow} the notation of \cite{dareiotis2021nonnegative} and write 
	\begin{equation}
		e_k(x)\,=\, \sqrt{2}\begin{cases}
			\cos(2\pi kx), & k \ge 1 ,\\
			\frac{1}{\sqrt{2}}, & k=0,\\
			\sin(2\pi kx), & k \le -1,
		\end{cases}\qquad k\in \ZZ,
	\end{equation}
	for the eigenfunctions of the periodic Laplace operator and define $\sigma_k = \lambda_k e_k$ for a sequence $(\lambda_k)_{k\in \ZZ}$. We let $(\beta^{(k)})_{k\in \ZZ}$ be a family of independent Brownian motions and define the Wiener process
	\[
	B(t,x)\,=\, \sum_{k\in \ZZ} \sigma_k(x)\beta^{(k)}(t).
	\]
	We insert $\mathcal{W}=\frac{dB}{dt}$ in \eqref{Eq100}, set $F_0(r)= r^\frac{n}{2}$ and, as in \cite[Eq. (2.4)]{dareiotis2021nonnegative}, obtain the following It\^o formulation of the Stratonovich interpretation of \eqref{Eq100}:
	\begin{equation}\label{Eq102}
		du\,=\, -\partial_x (F_0^2(u) \partial_x^3 u)\,dt
		\,+\, \frac{1}{2}\sum_{k\in \ZZ} \partial_x (\sigma_k F_0'(u)\partial_x (\sigma_k F_0(u)))\, dt\,+\, \sum_{k\in \ZZ} \, \partial_x (\sigma_k F_0(u)\,)d\beta^{(k)}.
	\end{equation}
	We use the notion of weak martingale solutions to \eqref{Eq102} from \cite[Definition 2.1]{dareiotis2021nonnegative}. We point out that, in contrast to \cite[Definition 2.1]{dareiotis2021nonnegative}, we restrict our definition to the relevant case of non-negative solutions.
	\begin{defi}\label{defi_sol}
		A weak martingale solution to \eqref{Eq102} with initial value $u_0\in L^2(\Omega,\mathfrak{F}_0{\cO ;} H^1(\TT))$ consists  of a probability space $(\tilde{\Omega}, \tilde{\mathfrak{A}}, \tilde{\PP})$ with a filtration $\tilde{\mathfrak{F}}$ satisfying the usual conditions, a family $(\tilde{\beta}^{(k)})_{k\in \ZZ}$ of independent $\tilde{\mathfrak{F}}$-Brownian motions and a weakly continuous, $\tilde{\mathfrak{F}}$-adapted, non-negative, $H^1(\TT)$-valued process $\tilde{u}$ such that
		\begin{enumerate}[label=(\roman*)]
			\item $\tilde{u}(0)$ has the same distribution as $u_0$,
			\item $\tilde{\EE}[\sup_{t\in [0,T]} \|\tilde{u}(t)\|_{H^1(\TT)}^2]<\infty$,
			\item for $\tilde{\PP}\otimes dt$-almost all $(\tilde{\omega},t)\in \tilde{\Omega}\times [0,T]$ the weak derivative of third order $\partial_x^3\tilde{u}$ exists on $\{\tilde{u}> 0\}$ and satisfies $\tilde{\EE}[\|\mathbbm{1}_{\{\tilde{u}>0\}} F_0(\tilde{u}) \partial_x^3\tilde{u}\|_{L^2([0,T]\times \TT)}^2]<\infty$,
			\item for all $\vp\in C^\infty(\TT)$, $\tilde{\PP}$-almost surely, we have 
			\begin{align}\begin{split}\label{Eq136}
					&
				(\tilde{u}(t),\vp)_{L^2(\TT)}\,=\, 
				(\tilde{u}(0),\vp)_{L^2(\TT)}\,+\,\int_0^t \int_{\{\tilde{u}(s)> 0\}} 
				F_0^2(\tilde{u})\partial_x^3 \tilde{u} \,\partial_x\! \vp
				\, dx\, ds 
				\\&\qquad -\,\frac{1}{2} \sum_{k\in \ZZ}\int_0^t (\sigma_kF_0'(\tilde{u}) \partial_x (\sigma_k F_0(\tilde{u})), \partial_x\! \vp )_{L^2(\TT)}\, ds\,-\, \sum_{k\in \ZZ} \int_0^t (\sigma_k F_0(\tilde{u}), \partial_x\! \vp)_{L^2(\TT)} \, d{\cO\tilde{\beta}^{(k)}},
				\end{split}
			\end{align}
		  	{\cO for all $t\in [0,T]$.}
		\end{enumerate}
	\end{defi}
	
	The main result of this article reads as follows. We remark that for $n\in [\frac{8}{3},3)$, a comparable existence result is already available, see \cite[Theorem 2.2]{dareiotis2021nonnegative}.
	
	\begin{thm}\label{Thm_Existence} Let $T\in (0,\infty)$, $n\in (2,3)$, $p>n+2$ and $u_0\in L^p(\Omega, \mathfrak{F}_0{\cO;} H^1(\TT))$ non-negative with
		\begin{equation}\label{Eq124}
				{\color{OliveGreen}\EE\biggl[
			\biggl|\int_{\TT} u_0 \, dx\biggr|^{\frac{p(n+2)}{8-2n}}\biggr]}\,+\, \EE\biggl[\biggl(
			\int_{\TT}
			(u_0-1)-\log(u_0)
			\, dx
			\biggr)^\frac{p}{2}
			\biggr]\,<\, \infty
		\end{equation}
		and moreover 
		\begin{equation}\label{Eq120}
			\sum_{k\in \ZZ} \lambda_k^2k^4\,<\, \infty.
		\end{equation}
	Then \eqref{Eq102} admits a weak martingale solution
		\[
	\bigl\{
	(\tilde{\Omega}, \tilde{\mathfrak{A}},\tilde{\mathfrak{F}},\check{\PP} ), \,(\tilde{\beta}^{(k)})_{k\in \ZZ},\, \tilde{u}
	\bigr\}
	\]
	in the sense of Definition \ref{defi_sol} with initial value $u_0$ satisfying $\tilde{u}>0${\cO,} $\tilde{\PP}\otimes dt\otimes dx$-almost everywhere. Moreover, this solution satisfies the estimate
		\begin{align}\begin{split} \label{Eq137}
			&
			\tilde{\EE}	\biggl[
			\sup_{t\in [0,T]} \|\partial_x \tilde{u}(t)\|_{L^2(\TT)}^p \,+\,
			\sup_{t\in[0,T]} \biggl(\int_{\TT} (\tilde{u}(t)-1) -\log(\tilde{u}(t))\, dx\biggr)^\frac{p}{2}\biggr]
			\\&\qquad +\, \tilde{\EE}	\Bigl[\|\mathbbm{1}_{\{\tilde{u}>0\}} F_0(\tilde{u}) \partial_x^3\tilde{u}\|_{L^2([0,T]\times \TT)}^p
			\Bigr]
			\\& \quad \lesssim_{n, \sigma, p,T}
			\, {\EE}\biggl[
			\|\partial_x u_0\|_{L^2(\TT)}^p
			\,
			+\, {\color{OliveGreen}\biggl|\int_{\TT} u_0 \, dx\biggr|^\frac{p(n+2)}{8-2n}}
			\, +\,\biggl(
			\int_{\TT} (u_0-1)-\log(u_0)\, dx\biggr)^\frac{p}{2}\biggr]
			\,+\, 1
		\end{split}
	\end{align}
	and 
	\begin{equation}\label{Eq144}
		\tilde{u}\in L^{q}(\tilde{\Omega},C^{\frac{\gamma}{4},\gamma}([0,T]\times \TT) )
		\quad \text{for all}\quad \gamma\in \bigl(0,\tfrac{1}{2}\bigr) \quad \text{and}\quad q\in \bigl[1,\tfrac{2p}{n+2}\bigr).
	\end{equation}
	\end{thm}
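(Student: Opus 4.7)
The plan is to construct the solution by a layered approximation inspired by the introduction's remark: replace the mobility $F_0(r)=r^{n/2}$ by a family of \emph{inhomogeneous} mobilities $\Fde$ that agree with $F_0$ for moderate values of $r$ but behave like a higher power of $r$ near $0$, chosen so that the corresponding exponent lies in the regime $[\tfrac{8}{3},3)$ treated in \cite{dareiotis2021nonnegative}. One should combine this with the usual additional regularisations: an ultraviolet cutoff $R$ in the Galerkin sense and an $\epsilon$-regularisation of the fourth-order term (e.g. $-\epsilon\partial_x^4 u$). Applying the machinery of \cite{dareiotis2021nonnegative} on each level $(\delta,\epsilon,R)$ then yields smooth, strictly positive approximate solutions $\uder$ on some stochastic basis. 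Positivity is \emph{crucial} in what follows because the $\log$-entropy is only well defined away from zero.

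The core step is to derive bounds on $\uder$ that are uniform in $(\delta,\epsilon,R)$ and match the right-hand side of \eqref{Eq137}. For this one applies It\^o's formula twice: once to $\tfrac12\|\partial_x \uder\|_{L^2}^2$ and once to the $\log$-entropy $\int_{\TT}[(\uder-1)-\log\uder]\,dx$. The deterministic part of the energy identity contributes the good term $-\|\Fde(\uder)\partial_x^3\uder\|_{L^2}^2$ plus the It\^o--Stratonovich correction, whose essential content is
\[
\tfrac{1}{2}\sum_{k\in\ZZ}\|\partial_x(\sigma_k \Fde(\uder))\|_{L^2}^2,
\]
which is \emph{bad} and cannot be absorbed directly. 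The observation advertised in the abstract is that, up to terms controlled by \eqref{Eq120} and by the mass $\int \uder\,dx$, this quadratic variation is pointwise bounded by a multiple of the deterministic dissipation in \eqref{Eq_AE_Est} at $\alpha=-1$, which coincides with the negative drift in the It\^o expansion of $\int[(\uder-1)-\log\uder]\,dx$. Adding a suitable positive multiple of the $\log$-entropy identity to the energy identity therefore cancels the bad term, producing, after taking $p/2$-th powers, suprema in time, expectations, and applying the Burkholder--Davis--Gundy inequality to the resulting martingales, a Gronwall inequality for the left-hand side of \eqref{Eq137} in terms of the initial data in \eqref{Eq124}. The mass control needed to relate $\int(\uder-1)-\log\uder$ to the $L^1$-norm and to the $H^1$-norm of $\uder$ is the origin of the exponent $p(n+2)/(8-2n)$ in \eqref{Eq124}.

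From these bounds together with the It\^o equation one extracts a uniform H\"older estimate for $\uder$ in time with values in a negative Sobolev space; interpolating with the $H^1$-bound yields the regularity \eqref{Eq144} and in particular tightness of the laws of $\uder$ in $C^{\gamma/4,\gamma}([0,T]\times\TT)$. A Jakubowski--Skorokhod argument provides a new probability space on which one has almost sure convergence of $\uder$ and of the driving Brownian motions. The main remaining obstacle is identification of the limit equation \eqref{Eq136}: one must pass to the limit in the degenerate nonlinearity $\mathbbm{1}_{\{\uder>0\}}\Fde^2(\uder)\partial_x^3\uder$ despite only weak $L^2$-convergence of $\Fde(\uder)\partial_x^3\uder$ on $\{\uder>0\}$ and the fact that the sets $\{\uder>0\}$ may shrink in the limit. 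This is handled as in \cite{dareiotis2023solutions}: strong convergence of $\uder$ in $C^0$ combined with the uniform bound on $\|\Fde(\uder)\partial_x^3\uder\|_{L^2}$ allows, via an Egoroff-type / Vitali argument, the identification of the weak limit on the open set $\{\tilde u>0\}$, while the contribution over $\{\tilde u=0\}$ vanishes thanks to the factor $\Fde^2(\uder)$. The Stratonovich correction and the stochastic integral pass to the limit using \eqref{Eq120} and the usual Skorokhod-based convergence of stochastic integrals. Finally, the uniform $\log$-entropy bound transfers to $\tilde u$ and forces $\tilde u>0$ $\tilde\PP\otimes dt\otimes dx$-a.e., completing the proof.
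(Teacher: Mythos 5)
Your proposal correctly identifies the central idea of the paper: pass to an inhomogeneous mobility of Bernis--Friedman type that behaves like a higher power near $0$, use the $\log$-entropy ($\alpha=-1$) dissipation to absorb the $(\partial_x u)^4$-term in the It\^o expansion of the energy, and conclude by stochastic compactness. Most of the outline (mass control giving the exponent $p(n+2)/(8-2n)$, the $C^{1/4}$-in-time bound interpolated with the $H^1$-bound to get \eqref{Eq144}, identification of the nonlinearity \`a la \cite{GessGann2020}) matches the paper's architecture.

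There is, however, a genuine gap in the step where you apply It\^o's formula to $\int_{\TT}[(\uder-1)-\log \uder]\,dx$. You write that the machinery of \cite{dareiotis2021nonnegative} on each level $(\delta,\epsilon,R)$ yields \emph{strictly positive} approximations, and that this positivity is ``crucial'' for the $\log$-entropy to be defined. But this is not available: neither the Galerkin approximations nor the solutions to \eqref{Eq108} at the $(\epsilon,R)$-level are non-negative --- the $\epsilon$-regularisation $K_\epsilon(r)=(r^2+\epsilon^2)^{1/2}$ deliberately produces a non-degenerate equation whose solutions may change sign; non-negativity only appears in the limit $\epsilon\searrow 0$ (Lemma \ref{Lemma_Ex_ud}), where it is too late, because one then no longer has the $H^3$-regularity needed to justify It\^o's formula. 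This is precisely the tension the paper must resolve, and it does so by introducing the carefully chosen approximate functional $L_{\delta,\epsilon}$ of \eqref{Eq_Lde}, which is smooth on \emph{all} of $\RR$, applying It\^o's formula to $\int_{\TT}L_{\delta,\epsilon}(\uderh)\,dx$ at the regular $(\epsilon,R)$-level, and then separately tracking the contributions of $\uderh^+$ and $\uderh^-$ via the estimates of Lemmas \ref{Lemma_prop_Jd}--\ref{Lemma_Lde}. The terms involving $\uderh^-$ come with $\delta$-dependent constants, and one must argue (Lemmas \ref{Lemma_EE}, \ref{Lemma_log_entropy_est}) that these vanish as $\epsilon\searrow 0$ because the limiting $\udc$ is non-negative and $\|\check{u}_{\delta,\epsilon}^-\|_{C([0,T]\times\TT)}\to 0$. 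Without this mechanism your plan to apply It\^o's formula directly to the true $\log$-entropy does not make sense, since $\log \uderh$ is ill-defined whenever $\uderh$ vanishes or is negative. (Two smaller inaccuracies: the paper's $\epsilon$-regularisation acts on the mobility via $K_\epsilon$, not by adding a viscosity term $-\epsilon\partial_x^4 u$; and the $R$-cutoff $g_R(\|u\|_{C(\TT)})$ truncates the noise coefficients, it is not an ultraviolet cutoff. Also, the estimates are closed by absorption with a small parameter $\kappa$ rather than by Gronwall, though this is stylistic.)
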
{\cO
	The estimate \eqref{Eq137}  can be seen as a combined energy-$\log$-entropy estimate for the solution, cf. Subsection \ref{SS_apriori}.
	Additionally}, in  case that the initial value has finite entropy, we also recover an entropy estimate in the spirit of \cite[Eq. (2.6)]{dareiotis2021nonnegative}.
	
	\begin{prop}\label{Prop_Entr_Est}
		Under the assumptions of Theorem \ref{Thm_Existence}, the constructed solution $\tilde{u}$ satisfies
		\begin{align}\begin{split}\label{Eq162}
				&
				\tilde{\EE}\Bigl[
				\sup_{t\in [0,T]} \bigl\|(\tilde{u}(t))^{2-n}\bigr\|_{L^1(\TT)}^q\,+\, 
				\|\partial_x^2 \tilde{u}\|_{L^2([0,T]\times \TT)}^{2q}
				\Bigr]\\&\quad \lesssim_{n,q,\sigma, T}\, {\EE}{\color{OliveGreen}\biggl[
				\bigl\|u_0^{2-n}\bigr\|_{L^1(\TT)}^q
				\,+\,  \biggl|\int_{\TT} u_0 \, dx\biggr|^{2q}
				\biggr]}\,+\, 1,
			\end{split}
		\end{align}
		 for each $q\ge 1$.
	\end{prop}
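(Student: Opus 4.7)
The strategy is to establish \eqref{Eq162} first for the strictly positive, smooth approximations $\uder$ constructed in the proof of Theorem \ref{Thm_Existence}, and then to pass to the limit by lower semicontinuity. For each such approximation I would apply Itô's formula (in Stratonovich form, which is available because the approximations are smooth and strictly positive) to the non-negative entropy functional
\[
H(u)\,=\,\int_{\TT}\frac{u^{2-n}-1 + (n-2)(u-1)}{(n-1)(n-2)}\,dx,
\]
whose density $\phi$ satisfies $\phi''(u)=u^{-n}$, $\phi\ge 0$, and $\phi(1)=\phi'(1)=0$. The Stratonovich transport term produces, through integration by parts as in \eqref{Eq11}, the classical dissipation $-\int(\partial_x^2 u)^2\,dx\,dt$, together with a Stratonovich stochastic term $\sum_k Y_k(\uder)\circ d\tilde{\beta}^{(k)}$ in which, after substituting $\phi''(u)=u^{-n}$ and $F_0(u)=u^{n/2}$ and integrating by parts once more, $Y_k$ reduces to a multiple of $\int_{\TT}u^{1-n/2}\partial_x\sigma_k\,dx$.

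Converting this Stratonovich integral to Itô produces a quadratic-variation drift which, thanks to \eqref{Eq120} and the Sobolev embedding $H^1(\TT)\hookrightarrow L^\infty(\TT)$, is bounded by a constant times $1+\|\partial_x\uder\|_{L^2}^2+\|\uder\|_{L^1}^{2-n}+H(\uder)$. Collecting all contributions, this gives a pathwise inequality of the form
\[
dH(\uder)\,+\,c\|\partial_x^2\uder\|_{L^2}^2\,dt\,\le\, C\bigl(1+\|\partial_x\uder\|_{L^2}^{2}+\|\uder\|_{L^1}^{2-n}+H(\uder)\bigr)\,dt\,+\,dM_t,
\]
where $M$ is a martingale whose quadratic variation satisfies an analogous bound. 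Raising this to the power $q$, taking $\sup_{s\le t}$ and expectations, applying Burkholder-Davis-Gundy to $M$ and absorbing a small fraction of $\|\partial_x^2\uder\|_{L^2}^{2q}$ on the right, Grönwall closes the estimate for $\uder$ in combination with the energy bound \eqref{Eq137} already known from Theorem \ref{Thm_Existence}. The factor $|\int_\TT u_0\,dx|^{2q}$ on the right of \eqref{Eq162} arises because $t\mapsto \int_\TT \uder(t)\,dx$ is a martingale whose supremum is controlled, via BDG and the $\ell^2$ summability of $(\lambda_k)$, by the initial mass; this in turn feeds into the $\|\uder\|_{L^1}^{2-n}$ term.

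Finally, the construction in Theorem \ref{Thm_Existence} provides, along a subsequence, $\uder\to\tilde{u}$ $\tilde{\PP}$-a.s.\ in $C([0,T];L^2(\TT))$ and $\partial_x^2\uder\rightharpoonup\partial_x^2\tilde{u}$ weakly in $L^2([0,T]\times\TT)$ on the enlarged probability space. Fatou's lemma applied to the non-negative functional $t\mapsto H(\uder(t))$ and weak lower semicontinuity of the $L^2$-norm for $\partial_x^2\uder$ transfer the bound to $\tilde{u}$. The principal obstacle is making the Itô computation rigorous despite the singular factor $u^{-n}$ in $\phi''$: it is precisely the strict positivity of the approximations, enforced by the modified mobility $\Fde$ which behaves like a higher power near $0$, that keeps every integral finite and allows the estimate to close. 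A secondary technical point is the algebraic cancellation between the Stratonovich correction in \eqref{Eq102} and the Itô-to-Stratonovich conversion of the stochastic integral of $H$; without exploiting it, the quadratic-variation correction would not be sign-controllable in the genuinely nonlinear noise regime $n\ne 2$ at hand.
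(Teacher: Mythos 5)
Your overall strategy---apply an It\^o/Stratonovich computation to the physical entropy functional at the level of the approximations, close the estimate via conservation of mass and BDG, then transfer to $\tilde{u}$ by Fatou and weak lower semicontinuity---is exactly the route the paper takes, and the cancellation remark at the end is the correct reason why Stratonovich is essential for $n\neq 2$. However, there is a genuine gap in the first step that would make your computation break down as written.

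You propose to apply It\^o's formula to $H(u)=\int_{\TT}\phi(u)\,dx$ with $\phi''(u)=u^{-n}$ ``for the strictly positive, smooth approximations $u_{\delta,\epsilon,R}$.'' But those approximations are \emph{not} strictly positive, nor even non-negative: the solutions $\uderh$ of the non-degenerate truncated equation \eqref{Eq108} (the only approximations regular enough, namely $L^2$-in-time $H^3(\TT)$, to justify It\^o's formula) take both signs, which is why the paper's arguments in Lemma \ref{Lemma_log_Entr_Approx_level} and Lemma \ref{Lemma_EE} must explicitly control $\|\uderh^-\|_{C([0,T]\times\TT)}$. The only non-negative members of the approximation chain are the $\udc$, and they satisfy $\udc>0$ only $\check{\PP}\otimes dt\otimes dx$-a.e.\ without a lower bound, and crucially do not lie in $L^2(0,T;H^3(\TT))$; so It\^o's formula cannot be applied to $\phi(\udc)$ directly, and the singular factor $u^{-n}$ is not integrable against $\uderh$ which can vanish or be negative. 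The device that makes the entropy computation go through---and which your proposal omits---is to replace $\phi$ by the regularized entropy density $G_{\delta,\epsilon}$, defined via $G_{\delta,\epsilon}''=1/F_{\delta,\epsilon}^2$ with $F_{\delta,\epsilon}(r)=F_\delta(K_\epsilon(r))$; since $K_\epsilon(r)=(r^2+\epsilon^2)^{1/2}$ keeps the argument bounded away from $0$, $G_{\delta,\epsilon}$ is a smooth, globally defined convex function on all of $\RR$, so It\^o's formula applies to $\int_{\TT}G_{\delta,\epsilon}(\uderh)\,dx$ without any positivity assumption. This is the content of Lemma \ref{Lemma_Entropy_Est}, whose right-hand side involves $\|G_{\delta,\epsilon}(u_0)\|_{L^1}$ and $|\int_{\TT}u_0\,dx|$ but, notably, \emph{not} the energy---so your reliance on the energy bound \eqref{Eq137} and Gr\"onwall is also unnecessary (and would be circular in the construction, since the $\log$-entropy estimate is used to close the energy estimate, not the other way around).

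Once Lemma \ref{Lemma_Entropy_Est} is in place, the actual proof of Proposition \ref{Prop_Entr_Est} is a short corollary: one bounds $G_{\delta,\epsilon}(u_{0,\delta})\lesssim u_{0,\delta}^{2-n}$ for $u_{0,\delta}\ge\delta$ using the two-sided estimate on $F_\delta$, passes $R\to\infty$, $\epsilon\searrow 0$, $\delta\searrow 0$ with Fatou's lemma, and checks that $\limsup_{\delta\searrow 0}\EE[\|u_{0,\delta}^{2-n}\|_{L^1}^q]\le\EE[\|u_0^{2-n}\|_{L^1}^q]$ by the same truncation argument as \eqref{Eq164}. So your high-level plan is sound, but you should route it through the smoothed entropy $G_{\delta,\epsilon}$ rather than $u^{2-n}$, and you can dispense with the energy bound and Gr\"onwall.
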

	
	\subsection{Extension of a previous existence result}
	
	{\cO Notably, Theorem \ref{Thm_Existence} as well as the aforementioned result \cite[Theorem 2.2]{dareiotis2021nonnegative} require that the initial value of the equation is positive almost everywhere on $\TT$.  At the expense of allowing for low regularity solutions in the sense of the following Definition \ref{defi_very_weak_sol}, existence of solutions to \eqref{Eq102} for non-fully supported initial data was shown in \cite[Theorem 1.6]{dareiotis2023solutions} under the assumption that $n\in [8/3,3)$. That the lower bound $n\ge 8/3$ coincides with the lower bound of mobility exponents from  \cite{dareiotis2021nonnegative} is no coincidence, because the result \cite[Theorem 2.2]{dareiotis2021nonnegative} is used in \cite{dareiotis2023solutions} to ensure existence of solutions when shifting the initial value away from $0$.
	Accordingly, it is mentioned in \cite[Consequence 2.1]{dareiotis2023solutions} that an extension of \cite[Theorem 2.2]{dareiotis2021nonnegative} to the range $n\in (2,8/3)$ would also yield an extension of \cite[Theorem 1.6]{dareiotis2023solutions} to said range of exponents.  As our main result Theorem \ref{Thm_Existence} together with Proposition \ref{Prop_Entr_Est} indeed extend \cite[Theorem 2.2]{dareiotis2021nonnegative} to this range, we conclude that \cite[Theorem 1.6]{dareiotis2023solutions} also holds for $n\in (2,8/3)$.} 
	
	More precisely, {\cO for the proof of \cite[Theorem 1.6]{dareiotis2023solutions}} it is needed that for {\cO every} 
	$u_0\in L^\infty(\Omega, \mathfrak{F}_0{\cO;} H^1(\TT))$ with $u_0>c>0$ there exists a weak martingale solution
	\[
	\bigl\{
	(\tilde{\Omega}, \tilde{\mathfrak{A}},\tilde{\mathfrak{F}},\check{\PP} ), \,(\tilde{\beta}^{(k)})_{k\in \ZZ},\, \tilde{u}
	\bigr\}
	\]
	to \eqref{Eq102} in the sense of Definition \ref{defi_sol} with initial value $u_0$, which is positive $\tilde{\PP}\otimes dt\otimes dx$-almost everywhere {\cO and satisfies}
	\begin{equation}\label{Eq6}
	\tilde{\EE}\biggl[
	\sup_{t\in[0,T]}\|\tilde{u}(t)\|_{H^1(\TT)}^{2n}\,+\, \|\mathbbm{1}_{\{\tilde{u}>0\}}F_0(\tilde{u})\partial_x^3\tilde{u}\|_{L^2([0,T]\times \TT)}^4\,+\, \|\tilde{u}\|_{L^2(0,T; H^2(\TT))}^4
	\biggr]\,<\, \infty,
	\end{equation}
	cf. \cite[Consequence 2.1]{dareiotis2023solutions}. {\cO The positivity of these solutions and the regularity requirement \eqref{Eq6} play a key role in  \cite{dareiotis2023solutions} to justify the derivation of suitable estimates.} By virtue of  Theorem \ref{Thm_Existence} and Proposition \ref{Prop_Entr_Est}, {\cO the existence of such solutions} is guaranteed also for $n\in (2,\frac{8}{3})$. {\cO Thus,} following \cite{dareiotis2023solutions}, one arrives at an extension of \cite[Theorem 1.6]{dareiotis2023solutions} to the whole interval $n\in (2,3)$. 
	
	To state {\cO the described extension}, we repeat the notion of solutions to \eqref{Eq102} from \cite[Definition 1.4]{dareiotis2023solutions}. Therefore, we denote the space of Radon measures on $\TT$ by $\MM(\TT)$ and equip it with the  sigma-field generated by its pre-dual space $C(\TT)$. Moreover, we denote the dual pairing between $\MM(\TT)$ and $C(\TT)$ by $\langle\cdot,\cdot\rangle$ and the set of $\MM(\TT)$-valued random variables by $ L^0(\Omega{\cO;} \MM(\TT))$.
	\begin{defi}\label{defi_very_weak_sol}
		A very weak martingale solution to \eqref{Eq102} with initial value $u_0\in L^0(\Omega, \mathfrak{F}_0{\cO;} \MM(\TT)) $ consists  of a probability space $(\tilde{\Omega}, \tilde{\mathfrak{A}},\check{\PP} )$ with a filtration $\tilde{\mathfrak{F}}$ satisfying the usual conditions, a family $(\tilde{\beta}^{(k)})_{k\in \ZZ}$ of independent $\tilde{\mathfrak{F}}$-Brownian motions and a vaguely continuous, $\tilde{\mathfrak{F}}$-adapted, non-negative, $\MM(\TT)$-valued process $\tilde{u}$ such that
		\begin{enumerate}[label=(\roman*)]
		\item $\tilde{u}(0)$ has the same distribution as $u_0$,
		\item for $\tilde{\PP}\otimes dt$-almost all $(\tilde{\omega},t)\in \tilde{\Omega}\times [0,T]$ the weak derivative  $\partial_x\tilde{u}$ exists in $L^1(\TT)$ and satisfies
		\[
		\tilde{u}^{n-2}(\partial_x\tilde{u})^3,\, 
		\tilde{u}^{n-1}(\partial_x\tilde{u})^2,\,
		\tilde{u}^{n}\partial_x\tilde{u},\,
		\tilde{u}^{n-2}\partial_x\tilde{u},\,
		\tilde{u}^{n}\,\in \, L^1([0,T]\times \TT)
		\]
		$\tilde{\PP}$-almost surely,
		\item for all $\vp\in C^\infty(\TT)${\cO, $\tilde{\PP}$-almost surely, we have} 
		\begin{align}\begin{split}
				&
				\langle \tilde{u}(t),\vp\rangle \,=\, 
				\langle \tilde{u}(0),\vp\rangle \,+\,\tfrac{n(n-1)}{2}\int_0^t \int_{\TT} 
				\tilde{u}^{n-2}(\partial_x\tilde{u})^3\partial_x \!\vp
				\,dx\, ds \,+\, \tfrac{3n}{2}\int_0^t \int_{\TT} 
				\tilde{u}^{n-1}(\partial_x\tilde{u})^2\partial_x^2\!\vp
				\,dx\, ds
				\\&\qquad +\,\int_0^t \int_{\TT} \tilde{u}^{n}\partial_x\tilde{u}\,\partial_x^3\! \vp  \,dx\, ds
				\,-\, 
				\tfrac{n}{4} \sum_{k\in \ZZ}\int_0^t\int_{\TT} \sigma_k \tilde{u}^{\frac{n}{2}-1} \partial_x (\sigma_k \tilde{u}^\frac{n}{2}) \partial_x\! \vp \, dx \, ds	\\&\qquad 
				-\, \sum_{k\in \ZZ} \int_0^t \int_{\TT} \sigma_k \tilde{u}^\frac{n}{2} \partial_x\! \vp \, dx \, d{\cO\tilde{\beta}^{(k)}},
			\end{split}
		\end{align}
	{\cO for all $t\in [0,T]$.}
	\end{enumerate}
	\end{defi}
	{\cO Finally, we state our extension of \cite[Theorem 1.6]{dareiotis2023solutions} from $n\in [\frac{8}{3},3)$ to the whole interval $(2,3)$. We remark that, in contrast to Theorem \ref{Thm_Existence}, the initial value is not required to have full support and that only $\alpha$-entropy estimates are derived for the solutions. As a consequence of the latter, less spatial regularity of the noise is assumed, cf. \eqref{Eq120}.}
	\begin{cor}Let $T\in (0,\infty)$, $n\in (2,3)$, $u_0\in L^0(\Omega, \mathfrak{F}_0{\cO;} \MM(\TT))$ non-negative and 
			\begin{equation}
			\sum_{k\in \ZZ} \lambda_k^2k^2\,<\, \infty.
		\end{equation}
		 Then \eqref{Eq102} admits a very weak martingale solution 
			\[
		\bigl\{
		(\tilde{\Omega}, \tilde{\mathfrak{A}},\tilde{\mathfrak{F}},\check{\PP} ), \,(\tilde{\beta}^{(k)})_{k\in \ZZ},\, \tilde{u}
		\bigr\}
		\]
		in the sense of Definition \ref{defi_very_weak_sol} satisfying the properties \cite[Theorem 1.6 (i)-(iv)]{dareiotis2023solutions} with initial value $u_0$.
	\end{cor}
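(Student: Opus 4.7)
The plan is to follow the proof of \cite[Theorem 1.6]{dareiotis2023solutions} essentially verbatim, replacing every invocation of the existence result \cite[Theorem 2.2]{dareiotis2021nonnegative}---which was restricted to $n \in [\tfrac{8}{3},3)$---by an application of Theorem \ref{Thm_Existence} together with Proposition \ref{Prop_Entr_Est}, which cover the full range $n \in (2,3)$. As spelled out in the discussion preceding the corollary, the input required by \cite{dareiotis2023solutions} is the existence, for each initial datum $u_0^\epsilon \in L^\infty(\Omega, \mathfrak{F}_0; H^1(\TT))$ with $u_0^\epsilon > c > 0$, of a positive weak martingale solution in the sense of Definition \ref{defi_sol} satisfying the regularity bound \eqref{Eq6}.

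To verify that our results supply this input, I would first approximate the given initial measure $u_0$ by a sequence $u_0^\epsilon$ obtained by spatial mollification followed by a positive shift, so that each $u_0^\epsilon$ lies in $L^\infty(\Omega, \mathfrak{F}_0; H^1(\TT))$ and is bounded below by $\epsilon$ almost surely. Since $u_0^\epsilon$ is then bounded above and below by positive constants, the moment condition \eqref{Eq124} holds trivially for every $p < \infty$: the entropy integrand $(u_0^\epsilon - 1) - \log u_0^\epsilon$ is uniformly bounded, and all moments of $\int_\TT u_0^\epsilon \, dx$ are finite. Choosing $p > \max(2n,4)$ and applying Theorem \ref{Thm_Existence} produces a solution $\tilde{u}^\epsilon$ which is positive $\tilde{\PP} \otimes dt \otimes dx$-almost everywhere and satisfies \eqref{Eq137}; combining the $H^1$-estimate from \eqref{Eq137} with the $H^2$-estimate from \eqref{Eq162} (applied with $q=2$) yields the full bound \eqref{Eq6} for $\tilde{u}^\epsilon$.

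With this input now available for every $n \in (2,3)$, the remaining steps of \cite{dareiotis2023solutions} transfer without modification: one derives uniform-in-$\epsilon$ $\alpha$-entropy estimates, uses them to establish tightness of the laws of $\tilde{u}^\epsilon$ in a suitable path space, applies a Skorokhod--Jakubowski type theorem to obtain almost-sure convergence on a new probability space, and passes to the limit in the very weak formulation, verifying properties (i)--(iv) of \cite[Theorem 1.6]{dareiotis2023solutions} along the way. I do not foresee a genuine new obstacle: the analytical work needed to broaden the admissible mobility range has been carried out in proving Theorem \ref{Thm_Existence}, and the corollary is essentially a bookkeeping step that checks our results supply the input \cite{dareiotis2023solutions} requires before invoking their argument.
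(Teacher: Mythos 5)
Your proposal is correct and matches the paper's approach: the paper, in the paragraphs preceding the corollary, reduces the matter to supplying the existence input \eqref{Eq6} required by \cite[Consequence~2.1]{dareiotis2023solutions} for positive, bounded $H^1$ initial data, and observes that Theorem~\ref{Thm_Existence} together with Proposition~\ref{Prop_Entr_Est} furnish exactly this. Your explicit verification that \eqref{Eq137} (with $p$ large, using conservation of mass and Poincar\'e to control the $L^2$-part of the $H^1$-norm) and \eqref{Eq162} with $q=2$ yield \eqref{Eq6} is the same bookkeeping the paper leaves implicit.
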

	\subsection{Strategy of the proof}\label{Sec_strategy}
	Our main result Theorem \ref{Thm_Existence} can be seen as an extension of \cite[Theorem 2.2]{dareiotis2021nonnegative}, which is limited to $n\in [\frac{8}{3},4)$,  to the case $n\in (2,\frac{8}{3})$. Indeed, the authors of \cite{dareiotis2021nonnegative} write 'We expect that the limitations $n\ge \frac{8}{3}$ and $n < 4$ are due to technical reasons and that these restrictions can be potentially removed
	in future work by making use of so-called $\alpha$-entropies'. We employ this suggested strategy as follows: At the core of {\cO\cite{dareiotis2021nonnegative}}   are suitable approximations {\cO of\eqref{Eq102}}, which are  compatible with the energy estimate {\cO discussed in Subsection \ref{SS_apriori}.} 
	{\cO Since, as demonstrated in \cite{BERNIS1990} for  the deterministic thin-film equation,} the energy estimate \eqref{Eq105a} provides sufficient compactness to show that a limit solves \eqref{Eq103a} in a weak sense{\cO, weak martingale solutions to \eqref{Eq102} are obtained using a stochastic compactness argument}. 
	 
	For the {\cO stochastic thin-film equation} \eqref{Eq102} a formal application of It\^o's formula shows that
	\[
	{\cO\frac{1}{2}}d\| \partial_x u\|_{L_x^2}^2 \,=\, -\|u^\frac{n}{2}\partial_x^3u\|_{L_x^2}^2\,dt\,+\,dM\,+\, dR,
	\]
	where $M$ is a local martingale and $R$ is the remainder after canceling the energy production due to the noise with the energy dissipation due to the Stratonovich correction. As calculated in \cite[Eq. (4.10)]{dareiotis2021nonnegative} {\cO and elaborated in Appendix \ref{App_B}}, this remainder term takes the form
	\begin{equation}\label{Eq9}
	dR\,=\, \biggl(\dots\,+\, {\cO\tfrac{1}{2}}\int u^{n-4}(\partial_xu)^4\, dx\biggr)\, dt,
	\end{equation}
	where the left out terms are less difficult to estimate than the one made explicit. In this article, we use the observation that {\cO this term is part of the $\alpha$-entropy dissipation \eqref{Eq_AE_Est} for $\alpha=-1$, i.e. the $\log$-entropy dissipation. In particular, we confirm the conjecture  that $\alpha$-entropy estimates can be used to generalize  \cite[Theorem 2.2]{dareiotis2021nonnegative} beyond the range [8/3,4).} 

We remark that {\cO in the derivation of the $\alpha$-entropy estimates for the deterministic thin-film equation in \cite{Beretta_Bertsch_DalPasso_95} a special nonlinear regularization proposed in \cite[Section 6]{BERNIS1990} is used.} 
	This ensures that the approximations are non-negative and allows to close $\alpha$-entropy estimates on the approximate level.  We will follow this idea and first construct non-negative solutions to 
	\begin{align}\begin{split}&
			\label{Eq107}
		d u_\delta \,=\,  -\partial_x (F_\delta^2(u_\delta) \partial_x^3u_\delta )\, dt
			\\&\qquad+\, \frac{1}{2}\sum_{k\in \ZZ} \partial_x (\sigma_k F_\delta'(u_\delta)\partial_x (\sigma_k F_\delta(u_\delta)))\, dt\,+\, \sum_{k\in \ZZ} \, \partial_x (\sigma_k F_\delta(u_\delta))d\beta^{(k)},
		\end{split}
	\end{align}
	where
	\begin{equation}
		F_\delta(r)\,=\, \tfrac{r^\frac{n+\nu}{2}}{r^\frac{\nu}{2}+\delta^\frac{\nu-n}{2}r^\frac{n}{2} },
	\end{equation}
	for appropriate $\nu\in (3,4)$. In particular $F_\delta(r)\sim r^\frac{\nu}{2}$ for $r$ close to $0$, so that up to minor modifications the strategy from \cite{dareiotis2021nonnegative} can be employed. {\cO We remark that since $F_\delta(r)\sim r^\frac{n}{2}$ for $r$ large, we refer to $F_\delta^2(u)$ as inhomogeneous mobility function throughout this manuscript.} Subsequently, we use the aforementioned idea and show a {\cO stochastic} version of the $\log$-entropy estimate and then of the energy estimate, which are uniform in $\delta$. This again enables us to apply the stochastic compactness method to extract a weak martingale solution to \eqref{Eq102} as $\delta\searrow 0$. 
	
	{\cO In the course of this article, we show that this strategy is successful for $n\in (2,3)$ bridging in particular the gap of mobility exponents $(2,8/3)$ for which existence of solutions to \eqref{Eq102} was unknown. For the quadratic case $n=2$ in contrast, a stochastic version of the energy estimate can be closed on its own, even for non-fully supported initial values, due to the linearity of the noise term, see \cite{GessGann2020}.
	Unfortunately, even though the $\log$-entropy belongs to the class of admissible $\alpha$-entropy functionals for $n\in (3/2,2)$ as well, see Subsection \ref{SS_apriori}, our approach fails for subquadratic mobility exponents.	The reason for this is that the $\alpha$-entropy production by the noise contains the $(\alpha+n-1)$-th power of the solution, see \cite[Eqs. (2.15)--(2.19)]{dareiotis2023solutions}. For $\alpha=-1$, corresponding to the $\log$-entropy, and $n<2$ this power is negative and therefore 
	requires a stronger control on the smallness of the solution than can be obtained from the $\log$-entropy itself.}

	\subsection{Notation}\label{Sec_further_notation}
	For the remainder of this article we fix the finite time horizon $T\in (0,\infty)$ and a sequence $(\lambda_k)_{k\in \ZZ}$ subject to the condition \eqref{Eq120}. Moreover, we write $\TT$ for the flat torus, i.e. for the interval $[0,1]$ with its endpoints identified. 	For a function $f$, we write $f_+$ and $f_-$ for its positive and negative part, respectively. To not overload the subscript, we  may also use the notations $f^+$ and $f^-$.

	Let $\X$ be a Banach space and $\nu$ a non-negative measure on a  measurable space $S$. Then we write $L^p(S{\cO;} \mathcal{X})$, $p\in [1,\infty]$, for the Bochner space on $S$ {\color{OliveGreen}consisting of strongly measurable $\mathcal{X}$-valued functions such that
	\[
	\|f\|_{L^p(S;\mathcal{X})}^p\,=\, \int_S   \|f\|_{\mathcal{X}}^p \, d\nu\,<\, \infty
	\]
	for $ p\in [1,\infty)$ and the usual modification for $p=\infty$.}	If $[0,T]$ is an interval, we use the notation $L^p(0,T; \mathcal{X})$ for $L^p([0,T]; \X)$. In the case $\mathcal{X}=\RR$, we simply write $L^p(S)$. 
	
	If $\mathcal{T}$ is a compact topological space, we write $C(\mathcal{T}{\cO;}\mathcal{X})$ for the space of continuous,  $\X$-valued functions equipped with the norm
	\[
	\|f\|_{C(\mathcal{T};\X)}\,=\, \sup_{y\in \mathcal{T}} \|f(y)\|_{\X}.
	\]
	In the case $\mathcal{X}=\RR$, we simply write $C(\mathcal{T})$. For $\beta\in (0,1)$, we write $C^\beta([0,T];\mathcal{X})$ for the H\"older space of $\mathcal{X}$-valued functions, which carries the norm
	\[
	\|f\|_{C^\beta([0,T];\mathcal{X})}\,=\, \|f \|_{C([0,T];\mathcal{X})}\,+\, [f]_{C^\beta([0,T]; \mathcal{X})},
	\]
	where
	\[
	[f]_{C^\beta([0,T]; \mathcal{X})}\,=\, \sup_{\substack {s,t\in [0,T]\\  s\ne t}}
	\frac{\|f(t)-f(s)\|_{\mathcal{X}}}{|t-s|^\beta}.
	\]
	
	For $l\in \NN$, we denote the space of $l$-times continuously differentiable functions on $\TT$ by $C^l(\TT)$ and equip it with the norm
	\[ \|f\|_{C^l(\TT)}\,=\, \sum_{j=0}^l \|\partial_x^j f\|_{C(\TT)}.
	\]
	For the smooth functions on $\TT$ we write $C^\infty(\TT)$.
	We also write $H^l(\TT)$ for the Sobolev space of order $l$ equipped with the norm
	\[
	\|f\|_{H^l(\TT)}^2\,=\, \sum_{j=0}^l \|\partial_x^j f\|_{L^2(\TT)}^2.
	\] 
	
	For two exponents $\beta,\gamma\in (0,1)$, we write $C^{\beta,\gamma}([0,T]\times \TT)$ for the mixed exponent H\"older space carrying the norm
	\[
	\|f\|_{C^{\beta,\gamma}([0,T]\times \TT)}\,=\, \|f\|_{C([0,T]\times \TT)}\,+\, [f]_{C^{\beta,\gamma}([0,T]\times \TT)},
	\]
	where
	\[
	[f]_{C^{\beta,\gamma}([0,T]\times \TT)}\,=\,
	\sup_{x\in \TT} \sup_{\substack {s,t\in [0,T]\\  s\ne t}}
	\frac{|f(t,x)-f(s,x)|}{|t-s|^\beta}
	\,+\,  \sup_{t\in[0,T]} \sup_{\substack{x,y\in \TT \\ x\ne y}}
	\frac{|f(t,x)-f(t,y)|}{|x-y|^\gamma}.
	\]

	If $(\Omega, \mathfrak{A}, \PP)$ is a probability space, we write $\EE[\,\cdot\,]$ for the expectation.  For two quantities $A$ and $B$, we write $A\lesssim B$, if there exists a universal constant $C$ such that $A\le CB$. {\color{OliveGreen}As a consequence, the presence of a constant may be indicated only once along a series of inequalities. Indeed, the relation $A_1\lesssim B \le A_2$ expresses that $A_1 \le CB$ for a universal constant $C$ and $B\le A_2$ and therefore also $A_1\le CA_2$. If such a constant depends on parameters $p_1,\dots$, we write $A\lesssim_{p_1,\dots}B$ instead and we write $A\eqsim_{p_1,\dots} B$, whenever $A\lesssim_{p_1,\dots} B$ and $B\lesssim_{p_1,\dots}A$. \color{OliveGreen} 
		
		 Moreover, we also use throughout the manuscript the approximate functions and functionals introduced in Section \ref{Sec_approx_functions}.}
	
		\section{Organization of the manuscript}
	To implement the idea laid out in {\cO Subsection \ref{Sec_strategy}}, we carefully introduce approximate versions of the functions involved in the desired a-priori estimates {\cO in the following} Section \ref{Sec_approx_functions}. 
	{\cO Section \ref{Sec_main} contains our main line of argument and culminates in the proofs of Theorem \ref{Thm_Existence} and Proposition \ref{Prop_Entr_Est}. Some parts concerning the proof of existence of solutions to the auxiliary equation \eqref{Eq107}, which are analogous to the proof of \cite[Theorem 2.2]{dareiotis2021nonnegative} are postponed to Section \ref{Sec_remaining_proofs}. The vital but technical estimates on the approximating functions and functionals are collected in Appendix \ref{App_A}. For convenience of the reader we also elaborate the It\^o expansion of the energy of a solution to the stochastic thin-film equation from \cite[Eq. (4.10)]{dareiotis2021nonnegative} in Appendix \ref{App_B}.}

	\section{Approximate mobilities and functionals}\label{Sec_approx_functions}
	For the rest of this article we fix  $n\in (2,3)$ and a corresponding $\nu\in (3,4)$ subject to the conditions
		\begin{align}\label{Eq135}& 
		\nu\,<\, 6-n,\\&	\label{Eq106}
			\nu^2 \,+\, \nu (2-4n)\,+\, n(n+2)\,\le \, 0.
		\end{align}
	We convince ourselves that these conditions are compatible as follows. Since \eqref{Eq135} is satisfied as soon as we choose $\nu$ close to $3$, it suffices to check that
	\[
	15\,-\, 10n \,+\, n^2\,=\, 3^2\,+\, 3(2-4n)\,+\, n(n+2)\,<\, 0
	\]
	for all $n\in (2,3)$. This, however, is an easy exercise. The assumption that $\nu<4$ is necessary to ensure that the strategy of \cite{dareiotis2021nonnegative} {also applies to \cO\eqref{Eq107}}, while the reasons for  \eqref{Eq135} and \eqref{Eq106} are subtle, see, e.g., {\cO\eqref{Eq10}} and \eqref{Eq56}, where these conditions are used. We moreover use throughout the article the regularization parameters 
	\begin{equation}\label{Eq152}
		\delta,\epsilon\,\in\, (0,1)
	\end{equation}
	and $R\in (0,\infty)$.
	Furthermore, to make this article more readable, we introduce the following notation{\cO:}
	\begin{align}&
		l\,=\, \tfrac{\nu-n}{2},\\&
		F_0(r)\,=\, r^\frac{n}{2}, \quad r\ge 0,\\&
		\label{Eq_Fd}F_\delta(r)\,=\, \tfrac{r^\frac{n+\nu}{2}}{r^\frac{\nu}{2}+ \delta^l r^\frac{n}{2}},\quad r\ge 0,\\&
		K_\epsilon(r)\,=\, (r^2+\epsilon^2)^\frac{1}{2}, \\&
		F_{\delta,\epsilon}(r)\,=\, F_\delta(K_\epsilon(r)),\\&
		\label{Eq_Jd}
		J_\delta(r)\,=\, \int_0^r \int_{r'}^\infty (F_{\delta}''(r''))^2\, dr''\, dr', \quad r\ge 0, \\&
		\label{Eq_Jde}
		J_{\delta,\epsilon}(r)\,=\, \int_0^r \int_{r'}^\infty (F_{\delta,\epsilon}''(r''))^2\, dr''\, dr', \\&
		\label{Eq_Ld}
		L_{\delta}(r)\,=\, \int_1^r \int_{1}^{r'} \frac{J_{\delta}(r'')}{F_{\delta}^2(r'')}\, dr''\, dr',\quad  r>0,\\&
		\label{Eq_Lde}
		L_{\delta,\epsilon}(r)\,=\, \int_1^r \int_{1}^{r'} \frac{J_{\delta,\epsilon}(r'')}{F_{\delta,\epsilon}^2(r'')}\, dr''\, dr',\\&\label{Eq_Gd}
		G_\delta(r)\,=\, \int_r^\infty \int_{r'}^\infty \frac{1}{F_{\delta}^2(r'')}\, dr''\, dr', \quad r>0,\\&
		\label{Eq_Gde}
		G_{\delta,\epsilon}(r)\,=\, \int_r^\infty \int_{r'}^\infty \frac{1}{F_{\delta,\epsilon}^2(r'')}\, dr''\, dr',\\&\label{Eq_Hde}
		H_{\delta,\epsilon}(r)\,=\,\int_r^\infty \frac{1}{F_{\delta,\epsilon}(r')}\, dr'.
	\end{align}
	The parameter $l$ is chosen in this way to ensure that the denominator of $F_\delta$ scales appropriately. While $F_0$ is the square-root of the original mobility $u^n$, $F_\delta$ is its approximation in the spirit of  {\cO \cite[Section 6]{BERNIS1990}}. The smooth approximations of the modulus function $K_\epsilon$ {\color{OliveGreen}satisfy
		\begin{equation}\label{Eq145}
		\tfrac{1}{\sqrt{2}}(r+\epsilon)\,\le \, 
		K_\epsilon(r) \,\le \, r+\epsilon
		,\quad r> 0,
	\end{equation}}
	 and the corresponding mobility function $F_{\delta,\epsilon}$ appear in the regularization procedure from \cite{dareiotis2021nonnegative}. 
	
	To interpret $J_\delta$ and $L_\delta$, we calculate that, when inserting $F_0$ instead of its modifications, one obtains
	\[
	J_0(r)\,=\, \int_0^r \int_{r'}^\infty (F_0''(r''))^2\,dr''\, dr'\,\eqsim_n\, 
	\int_0^r \int_{r'}^\infty (r'')^{n-4} \,dr''\, dr'\, \eqsim_n\, r^{n-2},\quad r>0
	\]
	and consequently
	\begin{align}&
	L_0(r)\,=\, \int_1^r \int_{1}^{r'} \frac{ J_0(r'')}{F_0^2(r'')}\,dr''\, dr'\,\eqsim_n\, 
	\int_1^r \int_1^{r'} \frac{(r'')^{n-2}}{(r'')^n} \,dr''\, dr'\,=\, (r-1)\,-\, \log(r),\quad r>0.
	\end{align}
	Hence, $L_\delta$ and $L_{\delta,\epsilon}$ are approximations of the $\log$-entropy functional.
	
	Also a remark concerning the existence of the integrals in the definitions of $J_\delta$ and $J_{\delta,\epsilon}$ is in order. We calculate explicitly that
\begin{align}&\label{Eq34}
	F_\delta'(r) \,=\,
	r^{\frac{n+\nu}{2}-1}\tfrac{nr^\frac{\nu}{2}\,+\, \delta^l  \nu r^\frac{n}{2}}{2(r^\frac{\nu}{2}+\delta^l  r^\frac{n}{2})^2}, \quad r>0,
	\\& \label{Eq33}
	F_\delta''(r)\,=\, r^{\frac{n+\nu}{2}-2}\tfrac{\delta^{2l}(\nu-2)\nu r^n \,-\, \delta^l  (\nu^2+\nu(2-4n) +n(n+2)) r^\frac{n+\nu}{2}\,+\, n(n-2)r^\nu}{4(r^\frac{\nu}{2}+\delta^l  r^\frac{n}{2})^3}, \quad r>0,
\end{align}
from which we conclude
\begin{align}&\label{Eq110}
	|F_\delta'(r)|\,\lesssim_{n,\nu} \, r^{\frac{n}{2}-1},  \quad r>0,
	\\& \label{Eq109}\noeqref{Eq109}
	|F_\delta''(r)|\,\lesssim_{n,\nu} \,  
	r^{\frac{n}{2}-2},\quad r>0.
\end{align}
In particular \eqref{Eq109} implies that 
\[(F_\delta''(r))^2 \,\lesssim_{n,\nu} \,
r^{n-4}, \quad r>0
\]
is integrable at infinity with 
\[
\int_{r}^\infty (F_{\delta}''(r'))^2\, dr'\, \lesssim_{n,\nu}\,
r^{n-3}, \quad r>0.
\]
This again is integrable at $0$ such that \eqref{Eq_Jd} is well-defined.

To argue in the same way for the definition of $J_{\delta,\epsilon}$, we observe first that by the chain rule
\begin{equation}\label{Eq39}\noeqref{Eq39}
	F_{\delta,\epsilon}''(r)\,=\, F_{\delta}''(K_\epsilon(r)) (K_\epsilon'(r))^2\,+\, F_{\delta}'( K_\epsilon(r) )K_\epsilon''(r)
\end{equation}
and furthermore
\begin{align}& \label{Eq44}\noeqref{Eq44}
		K_\epsilon'(r)\,=\, \frac{r}{(r^2+\epsilon^2)^\frac{1}{2}}\,\in \, (-1,1),
		\\&
		\label{Eq45}
		K_\epsilon''(r)\,=\, 
		\frac{\epsilon^2}{(r^2+\epsilon^2)^\frac{3}{2}}\,\le \, 
		\frac{1}{K_\epsilon(r)}.
\end{align}
Combining {\cO\eqref{Eq110}--\eqref{Eq45}} we conclude that
\begin{align}\label{Eq134}
	|F_{\delta,\epsilon}''(r)| \,\lesssim_{n,\nu}\, K_\epsilon^{\frac{n}{2}-2}(r)\,+\,
	\frac{K_\epsilon^{\frac{n}{2}-1}(r)}{K_\epsilon(r)}\, \eqsim \, K_\epsilon^{\frac{n}{2}-2}(r).
\end{align}
Taking the square results in 
\begin{align} \label{Eq123}
	(F_{\delta,\epsilon}''(r))^2\, \lesssim_{n,\nu} K_\epsilon^{n-4}(r).
\end{align}
The right hand side is integrable at infinity and {\cO since $\Fde$ is smooth also}
\[
\int_{r}^\infty (F_{\delta,\epsilon}''(r'))^2\, dr'
\]
defines a smooth function and in particular \eqref{Eq_Jde} is well-defined.

Since, by the previous considerations, the integrands in \eqref{Eq_Ld} and \eqref{Eq_Lde} are smooth functions on the domains of integration, also the definitions of $L_\delta$ and $L_{\delta,\epsilon}$ make sense.
Lastly, the functions $G_{\delta,\epsilon}$, $G_\delta$ and $H_{\delta,\epsilon}$ are defined analogously to \cite[Eq. (4.1)]{dareiotis2021nonnegative} and $G_\delta$, $G_{\delta,\epsilon}$ are approximations of the entropy function {\cO appearing in \eqref{Eq11}}.

	{\color{OliveGreen}\section{Construction of solutions}\label{Sec_main}
		{\cO
	This section is divided into  four subsections along which we prove our main results Theorem \ref{Thm_Existence} and Proposition \ref{Prop_Entr_Est}. In Subsection \ref{SS_Galerkin} we argue that the Galerkin setup from \cite[Section 3]{dareiotis2021nonnegative} also works to construct solutions to the equation \eqref{Eq108} below. In particular, we identify the main difference to \cite{dareiotis2021nonnegative}, namely that the approximate mobility functions $F_{\delta,\epsilon}^2(u)$ are inhomogeneous in $(\epsilon, u)$, as a mere technicality. Similarly, we use in Subsection \ref{SS_r_eps_limit} the approach from \cite[Sections 4 and 5]{dareiotis2021nonnegative} to prove the existence of weak martingale solutions to the auxiliary equation \eqref{Eq107}, by letting first $R\to \infty$ and then $\epsilon\searrow 0$ in \eqref{Eq108}. While some adaptions to the inhomogeneous setting are in order, as can be seen from the changed exponents in the energy estimate \eqref{Eq170} compared to \cite[Eq. (2.6)]{dareiotis2021nonnegative}, we postpone the details to Section \ref{Sec_remaining_proofs}. 
	
	In Subsection \ref{SS_delta_Est} we prove the $\delta$-uniform $\log$-entropy and energy estimate on the solutions to \eqref{Eq107}, which are at the heart of this manuscript. In particular, we solve difficulties related to the degeneracy of \eqref{Eq107} by applying It\^o's formula to the solutions to \eqref{Eq108} and passing then to the limits $R\to \infty$ and $\epsilon\searrow 0$. Interestingly, while the estimates on the solutions to \eqref{Eq108} still contain $\delta$-dependent constants, these terms vanish as $\epsilon\searrow 0$. This is because the solutions to \eqref{Eq108} are not non-negative, which leads to difficulties when deriving $\alpha$-entropy estimates. With a uniform energy estimate at hand, we follow in Subsection \ref{SS_delta_lim} once more \cite[Section 5]{dareiotis2021nonnegative} to identify the limit as a weak martingale solution to \eqref{Eq102} and give proof to Theorem \ref{Thm_Existence} and Proposition \ref{Prop_Entr_Est}. 
	}}

	{\color{OliveGreen}\subsection{Galerkin approximation}\label{SS_Galerkin}
	The starting point of our approximation procedure is a Galerkin setup} to construct martingale solutions to the non-degenerate SPDE
			\begin{align}\begin{split}&\label{Eq108}
				d \uder \,=\,  -\partial_x (F_{\delta,\epsilon}^2(\uder) \partial_x^3{\cO \uder} )\, dt
				\\&\qquad+\, \frac{1}{2} g_R^2(\|\uder\|_{C(\TT)})\sum_{k\in \ZZ} \partial_x (\sigma_k F_{\delta,\epsilon}'(\uder)\partial_x (\sigma_k F_{\delta,\epsilon}(\uder)))\, dt	\\&\qquad+\, g_R(\|\uder\|_{C(\TT)}) \sum_{k\in \ZZ} \, \partial_x (\sigma_k F_{\delta,\epsilon}(\uder))d\beta^{(k)}.
			\end{split}
		\end{align}
	Here, $g\colon [0,\infty)\to [0,1]$ is a smooth function with $g(r)=1$ for $r\in [0,1]$ and $g(r)=0$ for $r\in [2,\infty)$ and $g_R(r)= g(\frac{r}{R})$. We use the following notion of martingale solutions to \eqref{Eq108} from \cite[Definition 3.2]{dareiotis2021nonnegative}.
	\begin{defi}\label{defi_sol_non_degenerate}
		A martingale solution to \eqref{Eq108} with initial value $u_0\in L^2(\Omega,\mathfrak{F}_0{\cO;} H^1(\TT))$ consists of a probability space $(\hat{\Omega}, \hat{\mathfrak{A}}, \hat{\PP})$ with a filtration $\hat{\mathfrak{F}}$ satisfying the usual conditions, a family $(\hat{\beta}^{(k)})_{k\in \ZZ}$ of independent $\hat{\mathfrak{F}}$-Brownian motions and a continuous, $\hat{\mathfrak{F}}$-adapted, $H^1(\TT)$-valued process $\uderh$ such that
		\begin{enumerate}[label=(\roman*)]
			\item $\uderh(0)$ has the same distribution as $u_0$,
			\item $\hat{\EE}[\sup_{t\in [0,T]} \|\uderh(t)\|_{H^1(\TT)}^2]<\infty$,
			\item for $\hat{\PP}\otimes dt$-almost all $(\hat{\omega},t)\in \hat{\Omega}\times [0,T]$ the weak derivative of third order $\partial_x^3\uderh$ exists and satisfies $\hat{\EE}[\| F_{\delta,\epsilon}(\uderh) \partial_x^3\uderh \|_{L^2([0,T]\times \TT)}^2]<\infty$,
			\item for all $\vp\in C^\infty(\TT)$, $\hat{\PP}$-almost surely, we have 
			\begin{align}&
				(\uderh(t),\vp)_{L^2(\TT)}\,=\, 
				(\uderh(0),\vp)_{L^2(\TT)}\,+\,\int_0^t \int_{\TT} 
				F_{\delta,\epsilon}^2(\uderh)\partial_x^3 \uderh \,\partial_x\! \vp
				\, dx\, ds 
				\\&\qquad -\, \frac{1}{2}\sum_{k\in \ZZ} \int_0^t g_R(\|\uderh\|_{C(\TT)}) (\sigma_kF_{\delta,\epsilon}'(\uderh) \partial_x (\sigma_k F_{\delta,\epsilon}(\uderh)), \partial_x \!\vp )_{L^2(\TT)}\, ds\\&\qquad-\, \sum_{k\in \ZZ} \int_0^t g_R(\|\uderh\|_{C(\TT)}) (\sigma_k F_{\delta,\epsilon}(\uderh), \partial_x\! \vp)_{L^2(\TT)} \, d{\cO\hat{\beta}^{(k)}},
			\end{align}
			{\cO for all $t\in [0,T]$.}
		\end{enumerate}
	\end{defi}
	The proof of existence of solutions to \eqref{Eq108} is completely analogous to the one of \cite[Proposition 3.4]{dareiotis2021nonnegative} with mobility exponent $n${\cO, which is the growth exponent of the mobility function $\Fde^2$.} 
	\begin{lemma}[Existence for \eqref{Eq108}]\label{Lemma_Ex_uder}
		Let $p\ge n+2$ and $u_0\in L^p(\Omega,\mathfrak{F}_0{\cO;} H^1(\TT))$, then there exists a martingale solution 
			\[
		\bigl\{
		(\hat{\Omega}, \hat{\mathfrak{A}},\hat{\mathfrak{F}},\hat{\PP} ), \,(\hat{\beta}^{(k)})_{k\in \ZZ},\, \uderh
		\bigr\}
		\]
		to \eqref{Eq108} in the sense of Definition \ref{defi_sol_non_degenerate} with initial value $u_0$. Moreover, this solution satisfies the estimate
		\begin{align}\begin{split}
				\label{Eq114}&
			\hat{\EE}\biggl[
			\sup_{t\in [0,T]} \|\partial_x \uderh (t)\|_{L^2(\TT)}^p \,+\, \| F_{\delta,\epsilon}(\uderh)\partial_x^3 \uderh \|_{L^2([0,T]\times \TT)}^p
			\biggr]
			\\&\quad
			\lesssim_{\delta,\epsilon,n,\nu,p,R,\sigma,T} \,
			\EE\Bigl[
			\|\partial_x u_0 \|_{L^2(\TT)}^p
			\Bigr]
			\,+\, 1.
			\end{split}
		\end{align}
	\end{lemma}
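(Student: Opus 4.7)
The plan is to follow the Galerkin construction of \cite[Proposition 3.4]{dareiotis2021nonnegative} with only minor modifications to accommodate the inhomogeneous mobility. I would let $\Pi_N$ denote the $L^2(\TT)$-projection onto the span of $\{e_k : |k|\le N\}$, project \eqref{Eq108} onto $\Pi_N H^1(\TT)$, and impose initial data $\Pi_N u_0$. Because $\Fde$ and its derivatives are smooth and, crucially, $\Fde(r)\ge \Fde(\epsilon/\sqrt 2)>0$ by \eqref{Eq145}, the mobility $\Fde^2$ is strictly positive and bounded above on bounded sets; together with the cutoff $g_R$ this renders the coefficients of the projected system locally Lipschitz in the finite-dimensional $H^1(\TT)$-norm. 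Standard SDE theory together with a stopping argument then produces a unique continuous strong solution $\uderh^N$ on $[0,T]$ to the projected equation.

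Second, I would establish the $N$-uniform a-priori estimate \eqref{Eq114}. Applying It\^o's formula to $(\|\partial_x \uderh^N\|_{L^2(\TT)}^2)^{p/2}$, the drift contribution from the fourth-order term tested against $-\partial_x^2\uderh^N$ yields the dissipation $-\|\Fde(\uderh^N)\partial_x^3\uderh^N\|_{L^2(\TT)}^2$, while the Stratonovich correction combines with the quadratic-variation term, as in Appendix \ref{App_B}, to leave a remainder that is estimated using the pointwise bounds \eqref{Eq110}--\eqref{Eq109}, the cutoff bound $\|\uderh^N\|_{C(\TT)}\le 2R$ on the support of $g_R$, and the spatial regularity assumption \eqref{Eq120} on $(\sigma_k)$. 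A Burkholder--Davis--Gundy inequality applied to the stochastic integral followed by a Gronwall argument then yields the desired bound with constants depending on $\delta,\epsilon,n,\nu,p,R,\sigma,T$.

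Third, the uniform bound \eqref{Eq114}, combined with a Kolmogorov-type fractional time-regularity estimate on $\uderh^N$ obtained by inspecting the equation as in \cite[Section 3]{dareiotis2021nonnegative}, delivers tightness of the laws of $(\uderh^N)$ in a suitable path space such as $C([0,T];H^{1-\kappa}(\TT))\cap L^2(0,T;H^{2-\kappa}(\TT))$ for small $\kappa>0$. A Skorokhod--Jakubowski representation then yields, on a new probability space, an a.s.\ convergent subsequence together with new Brownian motions $(\hat\beta^{(k)})$. Continuity of $\Fde,\Fde'$ combined with the uniform integrability inherited from \eqref{Eq114} allows term-by-term passage to the limit in the projected weak formulation. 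The weak third derivative in item (iii) of Definition \ref{defi_sol_non_degenerate} is recovered by identifying the weak limit in $L^2([0,T]\times \TT)$ of $\Fde(\uderh^N)\partial_x^3\uderh^N$ and dividing by the strong limit of $\Fde(\uderh^N)$, which is bounded away from zero by $\Fde(\epsilon/\sqrt 2)$.

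The main obstacle, compared to the purely power-law setting of \cite{dareiotis2021nonnegative}, is the inhomogeneity of $\Fde^2$, which scales like $r^\nu$ near $0$ and like $r^n$ at infinity rather than as a single power. I expect this to be a pure technicality: the key estimates \eqref{Eq110}--\eqref{Eq109} show that $\Fde'$ and $\Fde''$ satisfy $\delta$-independent growth bounds of exactly the form used in \cite{dareiotis2021nonnegative}, and the $\epsilon$-regularization removes the degeneracy near $0$, so every step in the proof of \cite[Proposition 3.4]{dareiotis2021nonnegative} transfers with only bookkeeping changes concerning the $\delta,\epsilon,R$-dependence of the constants in \eqref{Eq114}.
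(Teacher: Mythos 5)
Your proposal is correct and follows essentially the same route as the paper: a Galerkin scheme as in \cite[Proposition 3.4]{dareiotis2021nonnegative}, a uniform energy estimate via It\^o's formula compatible with the projection, a stochastic compactness/Skorokhod argument to pass to the limit, and lower semi-continuity/Fatou to transfer the bound \eqref{Eq114}. The paper states the proof in two sentences by citing \cite[Proposition 3.4, Lemma 3.1, Eq. (3.6)]{dareiotis2021nonnegative} and remarking, exactly as you do, that the only relevant properties of the inhomogeneous mobility are that $F_{\delta,\epsilon}$ is bounded away from zero and behaves like $|r|^{n/2}$ at infinity; your write-up unpacks the same argument.
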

	\begin{proof}
		The proof of  \cite[Proposition 3.4]{dareiotis2021nonnegative} is based on a Galerkin scheme, which, by \cite[Lemma 3.1]{dareiotis2021nonnegative}, is compatible with the energy inequality for \eqref{Eq108}. With the energy estimate \cite[Eq. (3.6)]{dareiotis2021nonnegative} at hand, it suffices to use that the function $F_{\delta,\epsilon}$ is bounded away from zero and $F_{\delta,\epsilon}(r)\sim |r|^\frac{n}{2}$ for  $|r|$ large to take the limit in the Galerkin scheme. The estimate \eqref{Eq114} follows by using lower semi-continuity of the norm with respect to weak-* convergence and Fatou's lemma to take $N\to \infty$ in \cite[Eq. (3.6)]{dareiotis2021nonnegative}.
	\end{proof}
{\color{OliveGreen}\subsection{The limits \texorpdfstring{$R\to \infty$}{R to infinity} and \texorpdfstring{$\epsilon\searrow 0$}{epsilon to zero}}\label{SS_r_eps_limit}
Next, we} provide a version of the entropy estimate {\cite[Lemma 4.3]{dareiotis2021nonnegative}}, which is uniform in {\cO$R$, $\epsilon$ and $\delta$.}
	\begin{lemma}[{\cO$(R,\epsilon,\delta)$-Uniform Entropy Estimate}]\label{Lemma_Entropy_Est}
			Let $p\ge 1$ and $u_0\in L^{n+2}(\Omega,\mathfrak{F}_0{\cO;} H^1(\TT))$. Then any  martingale solution
		\[
		\bigl\{
		(\hat{\Omega}, \hat{\mathfrak{A}}, \hat{\mathfrak{F}}, \hat{\PP}), \,(\hat{\beta}^{(k)})_{k\in \ZZ}, \uderh 
		\bigr\}
		\]
		to \eqref{Eq108} in the sense of Definition \ref{defi_sol_non_degenerate} with initial value $u_0$ satisfies
		\begin{align}\begin{split}\label{Eq_Entr_E}
				&
				\hat{\EE}\biggl[
				\sup_{t\in [0,T]} \|G_{\delta,\epsilon}(\uderh(t))\|_{L^1(\TT)}^p
				\,+\, 
				\|\partial_x^2 \uderh\|_{L^2([0,T]\times \TT)}^{2p}
				\biggr]\\&\quad \lesssim_{n,\nu,p,\sigma, T}\, {\EE}{\color{OliveGreen}\biggl[
				\|G_{\delta,\epsilon}(u_0)\|_{L^1(\TT)}^p
				\,+\,  \biggl|\int_{\TT}u_0 \,dx\biggr|^{2p}
				\biggr]}\,+\, 1.
			\end{split}
		\end{align}
	\end{lemma}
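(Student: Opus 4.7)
The plan is to apply It\^o's formula to the functional $\uderh\mapsto\int_\TT G_{\delta,\epsilon}(\uderh(t))\,dx$ and to exploit the defining identity $G_{\delta,\epsilon}''(r)=1/F_{\delta,\epsilon}^2(r)$. Two integrations by parts of the resulting deterministic drift contribution yield
\[
-\int_\TT G_{\delta,\epsilon}'(\uderh)\,\partial_x\bigl(F_{\delta,\epsilon}^2(\uderh)\,\partial_x^3\uderh\bigr)\,dx\,=\,-\|\partial_x^2\uderh\|_{L^2(\TT)}^2,
\]
which recovers the classical entropy dissipation and produces the second term on the left-hand side of \eqref{Eq_Entr_E}. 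Next, I would combine the It\^o correction arising from the quadratic variation of the noise with the Stratonovich-to-It\^o corrector already present in \eqref{Eq108}. A direct expansion shows that the two contributions quadratic in $\sigma_k F_{\delta,\epsilon}'(\uderh)\partial_x\uderh$ cancel exactly. The remaining ``noise budget'' reduces to terms containing only $(\sigma_k')^2$ or $\sigma_k\sigma_k'$, paired either with constants or with $\log F_{\delta,\epsilon}(\uderh)$ (the latter appearing after one more integration by parts of the cross term $\sigma_k\sigma_k' F_{\delta,\epsilon}'(\uderh)/F_{\delta,\epsilon}(\uderh)\,\partial_x\uderh$). These can be estimated uniformly in $\delta,\epsilon,R$ using the summability \eqref{Eq120}, the cutoff $g_R\le 1$, and the asymptotics of $F_\delta$ and $G_\delta$ collected in Section \ref{Sec_approx_functions}, with the remaining technical estimates deferred to Appendix \ref{App_A}.

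To close the estimate I would invoke mass conservation $\int_\TT\uderh(t)\,dx=\int_\TT u_0\,dx$, which follows from the fact that every term on the right-hand side of \eqref{Eq108} is in divergence form, together with the Poincar\'e inequality on $\TT$ to control $\|\uderh(t)\|_{L^2(\TT)}$ by $\|\partial_x\uderh(t)\|_{L^2(\TT)}+|\int_\TT u_0\,dx|$; the former is already bounded by the energy estimate \eqref{Eq114}. Raising the resulting identity to the $p$-th power, taking the supremum over $t\in[0,T]$, applying the Burkholder--Davis--Gundy inequality to the martingale contribution, and using Young's inequality to absorb the copy of $\sup_t\|G_{\delta,\epsilon}(\uderh)\|_{L^1(\TT)}^p$ produced by BDG back into the left-hand side, followed by a Gr\"onwall argument for the residual linear-in-$t$ remainder, yields \eqref{Eq_Entr_E}.

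The main obstacle will be verifying that the remainder terms involving $\log F_{\delta,\epsilon}(\uderh)$, which is unbounded both near $0$ and at infinity, admit bounds uniform in the regularization parameters $\delta$ and $\epsilon$. This hinges precisely on the algebraic cancellation between the It\^o and Stratonovich corrections outlined above: without it, one would be left with uncontrollable expressions of the form $\sigma_k^2(F_{\delta,\epsilon}'(\uderh)/F_{\delta,\epsilon}(\uderh))^2(\partial_x\uderh)^2$ that cannot be dominated by the $H^2$ seminorm appearing on the left-hand side of \eqref{Eq_Entr_E}.
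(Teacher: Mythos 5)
Your identification of the entropy dissipation via $G_{\delta,\epsilon}'' = 1/F_{\delta,\epsilon}^2$, the exact cancellation of the $\sigma_k^2(F_{\delta,\epsilon}'/F_{\delta,\epsilon})^2(\partial_x\uderh)^2$ contributions between the It\^o quadratic-variation correction and the Stratonovich corrector, and the residual $\log F_{\delta,\epsilon}(\uderh)$ terms (controlled by the analogue Lemma~\ref{Lemma_logFde}) all match the structure of the proof, which is carried out in \cite[Lemma~4.3]{dareiotis2021nonnegative}. What you omit is that the martingale term, after an integration by parts, produces $\int_\TT H_{\delta,\epsilon}(\uderh)\partial_x\sigma_k\,dx$, and bounding its quadratic variation is precisely where Lemma~\ref{Lemma_Hde} ($H_{\delta,\epsilon}^2\lesssim G_{\delta,\epsilon}$) enters; a pointer to the appendix is fine but this is the second essential ingredient the paper names explicitly.

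The genuine gap is your closing step. You invoke the energy estimate \eqref{Eq114} to control $\|\partial_x\uderh(t)\|_{L^2(\TT)}$, but \eqref{Eq114} carries a constant $\lesssim_{\delta,\epsilon,n,\nu,p,R,\sigma,T}$ and requires $p\geq n+2$, whereas the statement you must prove is uniform in $(R,\epsilon,\delta)$ and holds for all $p\geq 1$; with \eqref{Eq114} the uniformity claimed in \eqref{Eq_Entr_E} would be lost. The right-hand side of \eqref{Eq_Entr_E} contains only $\|G_{\delta,\epsilon}(u_0)\|_{L^1}^p$ and $|\int_\TT u_0|^{2p}$, precisely because the argument must not touch the (non-uniform) energy bound. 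Instead one controls the lower-order appearances of $\uderh$ -- in particular the $|\uderh|$ contribution coming from Lemma~\ref{Lemma_logFde} -- by two applications of Poincar\'e, $\|\uderh\|_{L^1(\TT)}\lesssim\|\partial_x^2\uderh\|_{L^2(\TT)}+|\int_\TT u_0\,dx|$, and then absorbs the resulting power of $\|\partial_x^2\uderh\|_{L^2([0,T]\times\TT)}$ into the dissipation term $\|\partial_x^2\uderh\|_{L^2([0,T]\times\TT)}^{2p}$ already sitting on the left-hand side via Young's inequality. This is also what produces the exponent $2p$ on $|\int_\TT u_0|$ in \eqref{Eq_Entr_E}. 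With this replacement your argument closes; as written, the appeal to \eqref{Eq114} is both unnecessary and fatal to the uniformity.
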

\begin{proof}
	The proof follows along the lines of \cite[Lemma 4.3]{dareiotis2021nonnegative} using the analogs Lemma \ref{Lemma_Hde} and Lemma \ref{Lemma_logFde} of \cite[{\cO Lemmas 4.1, 4.2}]{dareiotis2021nonnegative}.
\end{proof}

The following version of the energy estimate is uniform in $R$ and $\epsilon${\cO, but not in $\delta$. A $\delta$-uniform version will be  provided in Lemma \ref{Lemma_EE} below.}
	\begin{lemma}[{\cO$(R,\epsilon)$-Uniform Energy Estimate}]\label{Lemma_EEeps}
		Let $p\ge 1$ and $u_0\in L^{n+2}(\Omega,\mathfrak{F}_0{\cO;} H^1(\TT))$. Then any martingale solution
	\[
	\bigl\{
	(\hat{\Omega}, \hat{\mathfrak{A}}, \hat{\mathfrak{F}}, \hat{\PP}), \,(\hat{\beta}^{(k)})_{k\in \ZZ}, \uderh 
	\bigr\}
	\]
	to \eqref{Eq108} in the sense of Definition \ref{defi_sol_non_degenerate} with initial value $u_0$ satisfies
		\begin{align}\begin{split}\label{Eq_En_E1}
			&
			\hat{\EE}\biggl[
			\sup_{t\in [0,T]} \|\partial_x\uderh(t))\|_{L^2(\TT)}^p\,+\, \biggl(
			\int_0^T\int_{\TT}
			F_{\delta,\epsilon}^2(\uderh)(\partial_x^3 \uderh)^2 
			\,dx\, dt\biggr)^\frac{p}{2}
			\biggr]\\&\quad \lesssim_{\delta,n,\nu,\sigma,p, T}\,{\EE}{\color{OliveGreen}\biggl[
			\|\partial_x u_0\|_{L^2(\TT)}^p
			\,+\, \biggl|\int_{\TT}u_0\, dx\biggr|^{\frac{pn}{2}}
			\biggr]}\,+\, \hat{\EE}\Bigl[ \|\partial_x^2\uderh\|_{L^2([0,T]\times \TT)}^{2p}
			\Bigr]\,+\, 1.
		\end{split}
	\end{align}
	\end{lemma}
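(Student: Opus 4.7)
The plan is to apply Itô's formula to the functional $v\mapsto\tfrac{1}{2}\|\partial_x v\|_{L^2(\TT)}^2$ along the process $\uderh$, following the strategy of \cite[Proposition 4.4]{dareiotis2021nonnegative} but adapted to the inhomogeneous mobility $F_{\delta,\epsilon}$. I would first perform the computation at the Galerkin level and pass to the limit as in \cite[Lemma 3.1]{dareiotis2021nonnegative}. After integration by parts on $\TT$, the deterministic drift of \eqref{Eq108} contributes the dissipation $-\int_\TT F_{\delta,\epsilon}^2(\uderh)(\partial_x^3\uderh)^2\,dx$, while the Stratonovich correction drift and the Itô correction from the quadratic variation of the stochastic integral combine into a residual $dR_t=r_t\,dt$ of the form worked out in Appendix \ref{App_B}, so that
\[
\tfrac{1}{2}d\|\partial_x\uderh\|_{L^2(\TT)}^2\,=\,-\!\int_\TT F_{\delta,\epsilon}^2(\uderh)(\partial_x^3\uderh)^2\,dx\,dt\,+\,r_t\,dt\,+\,dM_t
\]
for some local martingale $M_t$.

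Next I would bound $r_t$ pointwise. It is a polynomial in $\partial_x\uderh$, $\partial_x^2\uderh$, $\sigma_k,\partial_x\sigma_k,\partial_x^2\sigma_k$ whose coefficients are products $F_{\delta,\epsilon}^{(j)}(\uderh)F_{\delta,\epsilon}^{(k)}(\uderh)$ for $j,k\in\{0,1,2\}$. The estimates \eqref{Eq110}, \eqref{Eq134}, combined with $F_{\delta,\epsilon}(r)\lesssim_{n,\nu}K_\epsilon(r)^{n/2}$ (verified directly from \eqref{Eq_Fd} by splitting the cases $K_\epsilon(r)\leq\delta$ and $K_\epsilon(r)>\delta$), give $|F_{\delta,\epsilon}^{(j)}(r)|\lesssim_{n,\nu}K_\epsilon(r)^{n/2-j}$ for $j=0,1,2$ uniformly in $(\delta,\epsilon)$. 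Using \eqref{Eq145}, \eqref{Eq152} and $\sum_k\|\sigma_k\|_{C^2(\TT)}^2\lesssim_\sigma 1$ (implied by \eqref{Eq120}), I am led to a pointwise bound of the schematic form
\[
|r_t|\,\lesssim_{n,\nu,\sigma}\,\bigl(1+\|\uderh\|_{C(\TT)}\bigr)^{n-2}\bigl((\partial_x\uderh)^4\,+\,(\partial_x\uderh)^2|\partial_x^2\uderh|\,+\,1\bigr).
\]
Since \eqref{Eq108} is in divergence form, mass is conserved, $\int_\TT\uderh(t)\,dx=\int_\TT u_0\,dx$, and combining this with the Poincaré-Wirtinger inequality and $H^1(\TT)\hookrightarrow C(\TT)$ yields $\|\uderh(t)\|_{C(\TT)}\lesssim\|\partial_x\uderh(t)\|_{L^2(\TT)}+|\int_\TT u_0\,dx|$. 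Integrating $|r_t|$ over $\TT$ and invoking the one-dimensional Gagliardo-Nirenberg inequality then bounds the spatial integral by an expression of the form $C(\|\partial_x\uderh\|_{L^2(\TT)}+|\int_\TT u_0\,dx|+1)^{n}(1+\|\partial_x^2\uderh\|_{L^2(\TT)}^{2})$ with $C=C(n,\nu,\sigma)$.

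The martingale $M_t$ has a quadratic variation of analogous structure; after integration by parts it is bounded by a similar expression, so the Burkholder-Davis-Gundy inequality together with a Young-type absorption of a small multiple of $\sup_{s\leq t}\|\partial_x\uderh(s)\|_{L^2(\TT)}^p$ into the left-hand side will produce \eqref{Eq_En_E1} after raising to the power $p/2$, taking expectations, and applying a (stochastic) Gronwall argument. The main obstacle will be to ensure that no $R$- or $\epsilon$-dependent constants creep into the bound: the cutoff $g_R$ must be used only through $0\leq g_R\leq 1$ and never inverted, so that no $R$-dependence arises, and $\epsilon$-uniformity requires that every pointwise estimate on $F_{\delta,\epsilon}^{(j)}$ depend on $\epsilon$ only through $K_\epsilon(\uderh)\leq\|\uderh\|_{C(\TT)}+1$. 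The presence of $\hat{\EE}[\|\partial_x^2\uderh\|_{L^2([0,T]\times\TT)}^{2p}]$ on the right-hand side is essential at this stage since we make no attempt to absorb it into the dissipation; it will be controlled in the next step via the entropy estimate of Lemma \ref{Lemma_Entropy_Est}.
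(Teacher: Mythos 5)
Your outline (It\^o's formula for the energy functional, estimate the residual drift, Burkholder--Davis--Gundy, absorption) matches the paper's strategy. The critical gap is in the claimed pointwise bound on $r_t$. The $(\partial_x u)^4$-contribution to the It\^o correction has coefficient $\sim \sigma_k^2(F_{\delta,\epsilon}''(\uderh))^2$, and by \eqref{Eq134} this scales like $K_\epsilon^{n-4}(\uderh)$. For $n\in(2,3)$ the exponent $n-4$ is negative, so this is \emph{not} controlled by any positive power of $(1+\|\uderh\|_{C(\TT)})$: at points where $\uderh$ is near $0$ it is of order $\epsilon^{n-4}$, which diverges as $\epsilon\searrow0$. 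Likewise, the $(\partial_x u)^3$-term carries $(F_{\delta,\epsilon}^2)'''(\uderh)$ and $((F_{\delta,\epsilon}')^2)'(\uderh)$, which scale like $K_\epsilon^{n-3}(\uderh)$ and are equally unbounded. So the schematic bound $|r_t|\lesssim_{n,\nu,\sigma}(1+\|\uderh\|_{C(\TT)})^{n-2}\bigl((\partial_x\uderh)^4+(\partial_x\uderh)^2|\partial_x^2\uderh|+1\bigr)$ is false, and the further claim of uniformity in $\delta$ is also wrong --- in fact the lemma's conclusion \eqref{Eq_En_E1} explicitly allows the constant to depend on $\delta$, and this dependence is unavoidable at this stage.

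The paper handles exactly these two terms differently, and this is the real content of the argument. For the quartic term it integrates by parts once more in $x$,
\[
\int_\TT (F_{\delta,\epsilon}''(\uderh))^2(\partial_x\uderh)^4\,dx\,=\,-3\int_\TT\biggl(\int_1^{\uderh}(F_{\delta,\epsilon}''(r))^2\,dr\biggr)(\partial_x\uderh)^2\,\partial_x^2\uderh\,dx,
\]
and then invokes the $\delta$-dependent bound $\int_\RR(F_{\delta,\epsilon}''(r))^2\,dr\lesssim_{\delta,n,\nu}1$ from Lemma~\ref{LE_1}; the cubic term is estimated directly via the $\delta$-dependent pointwise bounds $|(F_{\delta,\epsilon}^2)'''|\lesssim_{\delta,n,\nu}1$ and $|((F_{\delta,\epsilon}')^2)'|\lesssim_{\delta,n,\nu}1$ from Lemma~\ref{LE_2}. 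This is precisely where the constant picks up its $\delta$-dependence. A $\delta$-uniform energy estimate is obtained only later, in Lemma~\ref{Lemma_EE}, by passing $\epsilon\searrow0$ first (so that solutions become non-negative) and then feeding the $\log$-entropy estimate back in. Your treatment of the $(\partial_x u)^2$- and $(\partial_x u)^0$-terms via the exponents $K_\epsilon^{n-2}$ and $K_\epsilon^{n}$ is fine, since those are non-negative; only the high-order-in-$\partial_x u$ terms cause the trouble.
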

	\noindent{\color{OliveGreen}The proof of Lemma \ref{Lemma_EEeps} is  quite analogous to the proof of \cite[Lemma 4.6]{dareiotis2021nonnegative} and therefore postponed to the following Section \ref{Sec_remaining_proofs}.}

	For the sake of completeness, we repeat Definition \ref{defi_sol} for
	{\cO the stochastic thin-film equation with inhomogeneous mobility function \eqref{Eq107}, i.e.,
		\begin{align}\begin{split}&
			\label{Eq107b}
			d u_\delta \,=\,  -\partial_x (F_\delta^2(u_\delta) \partial_x^3u_\delta )\, dt
			\\&\qquad+\, \frac{1}{2}\sum_{k\in \ZZ} \partial_x (\sigma_k F_\delta'(u_\delta)\partial_x (\sigma_k F_\delta(u_\delta)))\, dt\,+\, \sum_{k\in \ZZ} \, \partial_x (\sigma_k F_\delta(u_\delta))d\beta^{(k)}.
		\end{split}
	\end{align}
	} We remark that we restrict ourselves also here to the case of non-negative solutions.
		\begin{defi}\label{defi_sol_approx}
		A weak martingale solution to {\cO \eqref{Eq107b}} with initial value $u_0\in L^2(\Omega,\mathfrak{F}_0{\cO;} H^1(\TT))$ consists  of a probability space $(\check{\Omega}, \check{\mathfrak{A}}, \check{\PP})$ with a filtration $\check{\mathfrak{F}}$ satisfying the usual conditions, a family $(\check{\beta}^{(k)})_{k\in \ZZ}$ of independent $\check{\mathfrak{F}}$-Brownian motions and a weakly continuous, $\check{\mathfrak{F}}$-adapted, non-negative, $H^1(\TT)$-valued process $\udc$ such that
		\begin{enumerate}[label=(\roman*)]
			\item $\udc(0)$ has the same distribution as $u_0$,
			\item $\check{\EE}[\sup_{t\in [0,T]} \|\udc(t)\|_{H^1(\TT)}^2]<\infty$,
			\item for $\check{\PP}\otimes dt$-almost all $(\check{\omega},t)\in \check{\Omega}\times [0,T]$ the weak derivative of third order $\partial_x^3\udc$ exists on $\{\udc> 0\}$ and satisfies $\check{\EE}[\|\mathbbm{1}_{\{\udc> 0\}} F_\delta(\udc) \partial_x^3\udc\|_{L^2([0,T]\times \TT)}^2]<\infty$,
			\item for all $\vp\in C^\infty(\TT)$, $\tilde{\PP}$-almost surely, we have 
			\begin{align}&
				(\udc(t),\vp)_{L^2(\TT)}\,=\, 
				(\udc(0),\vp)_{L^2(\TT)}\,+\,\int_0^t \int_{\{\udc(s)>0\}} 
				F_\delta^2(\udc)\partial_x^3 \udc \,\partial_x \!\vp
				\, dx\, ds 
				\\&\qquad -\, \frac{1}{2}\sum_{k\in \ZZ} \int_0^t(\sigma_kF_\delta'(\udc) \partial_x (\sigma_k F_\delta(\udc)), \partial_x\! \vp )_{L^2(\TT)}\, ds\,-\, \sum_{k\in \ZZ} \int_0^t (\sigma_k F_\delta(\udc), \partial_x \!\vp)_{L^2(\TT)} \, d{\cO\check{\beta}^{(k)},}
			\end{align}{\cO
			for all $t\in [0,T]$.}
		\end{enumerate}
	\end{defi}
	Since, as in \cite{dareiotis2021nonnegative}, we have obtained an $(R,\epsilon)$-uniform estimate on the entropy and energy of the approximate solutions, we can take the limit $R\to \infty$ and then $\epsilon\searrow 0$ to extract a weak martingale solution to {\cO\eqref{Eq107b} as in \cite[Proposition 4.7, Section 5]{dareiotis2021nonnegative}}.
	\begin{lemma}[{\cO Existence for \eqref{Eq107b}}]
		\label{Lemma_Ex_ud}
		Let $p>n+2$ and $u_0 \in L^p(\Omega, \mathfrak{F}_0{\cO;} H^1(\TT))$ with $u_0\ge 0$, {\color{OliveGreen}$\EE[|\int_{\TT} u_0 \,dx|^{2p}]<\infty$} and $\EE[\|G_\delta(u_0)\|_{L^1(\TT)}^{p}]<\infty$. Then there exists a weak martingale solution
		\[
		\bigl\{
		(\check{\Omega}, \check{\mathfrak{A}},\check{\mathfrak{F}},\check{\PP} ), \,(\check{\beta}^{(k)})_{k\in \ZZ},\, \udc
		\bigr\}
		\]
		to {\cO\eqref{Eq107b}} in the sense of Definition \ref{defi_sol_approx} with initial value $u_0$. Moreover, this solution satisfies the estimate
		\begin{align}\begin{split}\label{Eq170}
				&
			\check{\EE}\biggl[
			\sup_{t\in [0,T]} \|\partial_x\udc(t)\|_{L^2(\TT)}^p \,+\, 
			\sup_{t\in [0,T]} \|G_\delta(\udc(t))\|_{L^1(\TT)}^{p}
			\biggr]
			\\&
			\qquad+\, \check{\EE}\Bigl[\|
			\mathbbm{1}_{\{\udc> 0\}}F_{\delta}(\udc)\partial_x^3(  \udc)
			\|_{L^2([0,T]\times \TT)}^p
			\,+\, \|\partial_x^2 \udc\|_{L^2([0,T]\times \TT)}^{2p}
			\Bigr] \\&\quad \lesssim_{\delta,n,\nu,\sigma,p, T}\, {\EE}{\color{OliveGreen}\biggl[
			\|\partial_x u_0\|_{L^2(\TT)}^p
			\,+\, \biggl|\int_{\TT} u_0\, dx\biggr|^{2p}
			\,+\,  \|G_\delta(u_0)\|_{L^1(\TT)}^{p}
			\biggr]}\,+\,1
			\end{split}
		\end{align}
		and in particular $\udc>0$, $\check{\PP}\otimes dt\otimes dx$-almost everywhere.
	\end{lemma}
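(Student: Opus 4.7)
The plan is to follow the two-step limiting procedure of \cite[Proposition 4.7 and Section 5]{dareiotis2021nonnegative} applied to the martingale solutions $\uderh$ from Lemma~\ref{Lemma_Ex_uder}, sending first $R\to\infty$ and then $\epsilon\searrow 0$. The preliminary step is to combine the entropy estimate \eqref{Eq_Entr_E} with the energy estimate \eqref{Eq_En_E1} into $(R,\epsilon)$-uniform a-priori bounds. Since $K_\epsilon(r)\ge r\ge 0$ and $F_\delta$ is strictly increasing by \eqref{Eq34}, one has the pointwise domination $G_{\delta,\epsilon}(r)\le G_\delta(r)$ for $r>0$; combined with the hypothesis $\EE[\|G_\delta(u_0)\|_{L^1(\TT)}^p]<\infty$, this turns \eqref{Eq_Entr_E} into an $(R,\epsilon)$-uniform bound on $\hat{\EE}[\|\partial_x^2\uderh\|_{L^2([0,T]\times\TT)}^{2p}]$. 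Inserting this into the right-hand side of \eqref{Eq_En_E1} absorbs the corresponding term and yields $(R,\epsilon)$-uniform (but $\delta$-dependent) bounds on $\sup_t\|\partial_x\uderh(t)\|_{L^2(\TT)}^p$ and on $\|\Fde(\uderh)\partial_x^3\uderh\|_{L^2([0,T]\times\TT)}^p$.

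From these bounds, standard SPDE-based fractional time-regularity estimates analogous to those in \cite[Section 4]{dareiotis2021nonnegative} yield tightness of the laws of $\uderh$ on, e.g., $L^2(0,T;H^{2-\eta}(\TT))\cap C([0,T];H^{1-\eta}(\TT))$ for small $\eta>0$, jointly with the tight laws of the driving Brownian motions. The Skorokhod--Jakubowski representation theorem then provides a new probability space on which the sequence converges almost surely, first along $R\to\infty$ and then along $\epsilon\searrow 0$, to a candidate solution $\udc$ with Brownian motions $(\check{\beta}^{(k)})_{k\in\ZZ}$.

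The main obstacle will be the identification of the limit equation. For the cutoff, the uniform $L^2$-bound on $\partial_x\uderh$ together with the Sobolev embedding $H^1(\TT)\hookrightarrow C(\TT)$ ensures $\hat{\PP}(\sup_t\|\uderh(t)\|_{C(\TT)}>R)\to 0$, so $g_R(\|\uderh\|_{C(\TT)})\to 1$ in probability and the cutoff can be removed via dominated convergence. For $\epsilon\searrow 0$, the degenerate term is handled by factoring $\Fde^2(\uderh)\partial_x^3\uderh=\Fde(\uderh)\cdot\Fde(\uderh)\partial_x^3\uderh$, extracting a weak $L^2$-limit from the second factor using the uniform bound, and exploiting the a.s.\ uniform convergence $\uderh\to\udc$ together with continuity of $F_\delta$ to identify the limit, restricted to $\{\udc>0\}$, as $\mathbbm{1}_{\{\udc>0\}}F_\delta(\udc)\partial_x^3\udc$. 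The Stratonovich correction and the stochastic integral are passed to the limit by the martingale-characterization approach used in \cite[Section 5]{dareiotis2021nonnegative}; the structural role of $F_\delta$ here is the same as that of $F_0$ there.

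Finally, \eqref{Eq170} follows by Fatou's lemma and lower semicontinuity along the converging subsequence. Positivity of $\udc$ is then automatic: for $r>0$ one has $G_{\delta,\epsilon}(r)\nearrow G_\delta(r)$ as $\epsilon\searrow 0$, while a direct computation at $r\le 0$ shows $G_{\delta,\epsilon}(r)\to+\infty$ (the defining double integral then runs over a neighborhood of $0$ where $1/\Fde^2$ blows up as $\epsilon\searrow 0$); hence the $\epsilon$-uniform bound on the entropy moments combined with a.s.\ pointwise convergence and Fatou's lemma forces $\udc\ge 0$ a.e.\ and $\|G_\delta(\udc)\|_{L^1(\TT)}<\infty$ a.s. Since $F_\delta(r)\eqsim_{\delta,\nu}r^{\nu/2}$ on $(0,1]$ by \eqref{Eq_Fd}, a direct computation yields $G_\delta(r)\eqsim_{\delta,\nu}r^{2-\nu}\to\infty$ as $r\searrow 0$ because $\nu>3>2$, which rules out $\udc(t,x)=0$ on any set of positive measure and gives $\udc>0$ for $\check{\PP}\otimes dt\otimes dx$-almost every $(\check{\omega},t,x)$.
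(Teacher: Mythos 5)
Your proposal is correct and takes essentially the same route as the paper: combine Lemmas~\ref{Lemma_Entropy_Est} and~\ref{Lemma_EEeps} into the $(R,\epsilon)$-uniform bound~\eqref{Eq169} (your observation $G_{\delta,\epsilon}(r)\le G_\delta(r)$ for $r>0$ is precisely what the paper invokes via \cite[Remark~4.8]{dareiotis2021nonnegative}), then pass to the limits $R\to\infty$ and $\epsilon\searrow 0$ following \cite[Proposition~4.7, Section~5]{dareiotis2021nonnegative}, deducing~\eqref{Eq170} by weak lower semicontinuity/Fatou and positivity from the finite entropy combined with the blow-up of $G_\delta$ at $0$. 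The paper's proof is more terse, mainly flagging that the inhomogeneous $F_{\delta,\epsilon}$ still satisfies the growth bounds~\eqref{Eq113},~\eqref{Eq116} needed to reuse the cited arguments, and spelling out the Fatou computation for the $G_\delta$-term via its double-integral representation—details your sketch covers in substance.
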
{\color{OliveGreen}
	\noindent
	As it essentially follows along the lines of \cite[Proposition 4.7, Section 5]{dareiotis2021nonnegative}, the proof of Lemma \ref{Lemma_Ex_ud} is deferred to the following Section \ref{Sec_remaining_proofs} as well.}
	{\color{OliveGreen}
		\subsection{\texorpdfstring{$\delta$}{delta}-uniform estimates} \label{SS_delta_Est}  As laid out at the beginning of this section, we derive in the current subsection $\log$-entropy and corresponding energy estimates on the solutions to \eqref{Eq107b}, which are uniform in $\delta$.}  
		From a technical viewpoint, the main obstacle is the application of It\^o's formula for the energy of $\udc$ for which one would require $\udc\in L^2(0,T; H^3(\TT))$. We solve this issue by applying It\^o's formula on the level of the approximations, namely {\cO to} $\uderh$, instead. The possible negativity of $\uderh$ leads to problems when closing the estimate, which is reflected by the right hand side of \eqref{Eq111} containing constants which depend on $\delta$. As we will show in Lemma \ref{Lemma_EE} and Lemma  \ref{Lemma_log_entropy_est}, the terms containing these constants disappear when $\epsilon\searrow 0$, i.e., as the solutions become non-negative. 
		Before proceeding, we recall the 
	notations $f^{\pm}$ and $f_{\pm}$ for the positive and negative part of a function $f$.
	\begin{lemma}[{\cO Approximate $\log$-Entropy Estimate}]\label{Lemma_log_Entr_Approx_level}
		Let $p> n+2$ and $u_0\in L^\infty(\Omega,\mathfrak{F}_0{\cO;} H^1(\TT))$ such that $u_0\ge \delta$. Then any martingale solution
		\[
		\bigl\{
		(\hat{\Omega}, \hat{\mathfrak{A}}, \hat{\mathfrak{F}}, \hat{\PP}), \,(\hat{\beta}^{(k)})_{k\in \ZZ}, \uderh 
			\bigr\}
		\]
		 to \eqref{Eq108} {\cO in the sense of Definition \eqref{defi_sol_non_degenerate}} with initial value $u_0$ constructed in Lemma \ref{Lemma_Ex_uder} satisfies
			\begin{align}\begin{split}\label{Eq111}
					&
			\hat{\EE}\biggl[
			\sup_{t\in [0,T]}	\biggr(
			\int_{\TT} L_{\delta,\epsilon}^+(\uderh(t))\, dx\biggl)^\frac{p}{2}
			\,+\, \biggl(\int_0^T \int_{\TT}
			J_{\delta,\epsilon}^+(\uderh)(\partial_x^2 \uderh)^2
			\, dx\,ds\biggr)^\frac{p}{2}\biggr]
			\\&
			\qquad +\, \hat{\EE}\biggl[	\biggl(\int_0^T
			\int_{\TT}   (F_{\delta,\epsilon}''(\uderh))^2 (\partial_x \uderh)^4\,dx\,ds\biggr)^\frac{p}{2} 
			\biggr]
			\\& \quad \lesssim_{n,\nu,p,\sigma, T} C_{\delta,n,\nu,p,T} \hat{\EE}\Bigl[
			\|\uderh^-\|_{C([0,T]\times \TT)}^p\Bigl]^\frac{1}{2}{\color{OliveGreen}\biggl(\EE\biggl[
			\|G_{\delta,\epsilon}(u_0)\|_{L^1(\TT)}^p
			\,+\,  \biggl|\int_{\TT}u_0 \, dx\biggr|^{2p}
			\biggr]\,+\, 1\biggr)^\frac{1}{2}}
			\\&\qquad+\, \hat{\EE}\biggl[
			\sup_{t\in [0,T]}\|\uderh^+(t)\|_{L^1(\TT)}^{(n-2)\frac{p}{2}} 
			\biggr]\,+\, C_{\delta,n,\nu,p}\hat{\EE}\Bigl[
			\|\uderh^-\|_{C([0,T]\times \TT)}^{(4-n)\frac{p}{2}}\,+\,
			\|\uderh^-\|_{C([0,T]\times \TT)}^{(4-\nu)\frac{p}{2}} \Bigl]
			\\&
			\qquad+\, \EE\biggl[\biggl(
			\int_{\TT} (u_0-1) -\log(u_0)\, dx\biggr)^\frac{p}{2}\biggr].
				\end{split}
		\end{align}
	\end{lemma}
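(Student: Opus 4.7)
The natural approach is to apply It\^o's formula to the functional $u\mapsto \int_\TT L_{\delta,\epsilon}(u(x))\, dx$ evaluated along $\uderh$. Because $F_{\delta,\epsilon}$ is smooth and, for fixed $\delta,\epsilon>0$, uniformly bounded away from $0$, the map $L_{\delta,\epsilon}$ is smooth on all of $\RR$. Moreover, the bound $F_{\delta,\epsilon}(\uderh)\partial_x^3\uderh \in L^2$ from Definition \ref{defi_sol_non_degenerate}(iii) together with $F_{\delta,\epsilon}\gtrsim_{\delta,\epsilon} 1$ gives $\uderh\in L^2(0,T; H^3(\TT))$, so Lemma \ref{Lemma_Ex_uder} provides enough regularity to legitimize the calculation. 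Two integrations by parts on the mobility drift, combined with the defining identities $L''_{\delta,\epsilon} F_{\delta,\epsilon}^2 = J_{\delta,\epsilon}$ and $J''_{\delta,\epsilon} = -(F''_{\delta,\epsilon})^2$ read off from \eqref{Eq_Lde}--\eqref{Eq_Jde}, reproduce the classical $\alpha$-entropy dissipation structure of \cite{Beretta_Bertsch_DalPasso_95} and yield the deterministic contribution
\[
-\tfrac{1}{3}\int_\TT (F_{\delta,\epsilon}''(\uderh))^2 (\partial_x\uderh)^4\, dx \,-\, \int_\TT J_{\delta,\epsilon}(\uderh)(\partial_x^2\uderh)^2\, dx.
\]

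Next, the Stratonovich correction already present in \eqref{Eq108} together with the It\^o correction $\tfrac{1}{2}\int L''_{\delta,\epsilon}(\uderh)\,d\langle\uderh\rangle$ produces integrands built from $L'_{\delta,\epsilon}$, $L''_{\delta,\epsilon}$, $\sigma_k$, and the derivatives of $F_{\delta,\epsilon}(\uderh)$. The pointwise bounds from Section \ref{Sec_approx_functions} (in particular \eqref{Eq110} and \eqref{Eq134}), together with the spectral hypothesis \eqref{Eq120} on the noise, allow the $k$-sum to be dominated via Cauchy--Schwarz by a small multiple of the dissipation (absorbable into the left-hand side) plus terms estimable through $\sup_t\|\uderh^+(t)\|_{L^1(\TT)}$ and the entropy bound of Lemma \ref{Lemma_Entropy_Est}. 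After integrating in time, taking $\sup_{t\in[0,T]}$, raising to the power $p/2$, and passing to expectation, Burkholder--Davis--Gundy controls the martingale; its quadratic variation factors via Cauchy--Schwarz into a product of $\sup_t\int_\TT L_{\delta,\epsilon}^+(\uderh)\,dx$ (absorbable on the left after choosing the coefficient small enough) and a noise-related quantity bounded through Lemma \ref{Lemma_Entropy_Est}, which is the source of the square-root structure of the first right-hand side term of \eqref{Eq111}. The initial-value contribution $\int_\TT L_{\delta,\epsilon}(u_0)\,dx$, using $L_0(r)\eqsim_n (r-1)-\log(r)$ from Section \ref{Sec_approx_functions}, matches the last term on the right.

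The main obstacle is the sign-indefiniteness of $\uderh$: both $L_{\delta,\epsilon}$ and $J_{\delta,\epsilon}$ become negative on $\{u<0\}$, so we decompose $L_{\delta,\epsilon}=L_{\delta,\epsilon}^+-L_{\delta,\epsilon}^-$ and $J_{\delta,\epsilon}=J_{\delta,\epsilon}^+-J_{\delta,\epsilon}^-$ and move the negative parts to the right-hand side. Using estimate \eqref{Eq123} together with the definitions \eqref{Eq_Jde} and \eqref{Eq_Lde}, the functionals $J_{\delta,\epsilon}^-$ and $L_{\delta,\epsilon}^-$ grow polynomially on $\{u<0\}$ with exponents $4-n$ and $4-\nu$ respectively, precisely the powers appearing on the right-hand side of \eqref{Eq111}; since these functionals vanish on $\{u\ge 0\}$, the associated errors are bounded by those powers of $\|\uderh^-\|_{C([0,T]\times\TT)}$, multiplied by $\delta$-dependent constants reflecting the fast decay of $F_\delta$ near $0$. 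This $\delta$-dependence will be harmless later, since in Lemma \ref{Lemma_EE} and Lemma \ref{Lemma_log_entropy_est} the limit $\epsilon\searrow 0$ forces $\uderh^-\to 0$ and makes these contributions disappear.
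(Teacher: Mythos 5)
Your high-level plan is the same as the paper's: apply It\^o's formula to $\int_{\TT}L_{\delta,\epsilon}(\uderh)\,dx$, exploit the identities $L_{\delta,\epsilon}''F_{\delta,\epsilon}^2=J_{\delta,\epsilon}$ and $J_{\delta,\epsilon}''=-(F_{\delta,\epsilon}'')^2$ to produce the dissipation $-\int J_{\delta,\epsilon}(\partial_x^2\uderh)^2-\tfrac{1}{3}\int(F_{\delta,\epsilon}'')^2(\partial_x\uderh)^4$, split $L_{\delta,\epsilon},J_{\delta,\epsilon}$ into positive and negative parts, take $\sup_t$, the $p/2$-th power, expectations, and apply Burkholder--Davis--Gundy. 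That skeleton is correct. However, your attribution of the individual right-hand side terms of \eqref{Eq111} is wrong in two places, and your proposed martingale treatment is not the one that actually closes the estimate.

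First, the square-root structure $\hat{\EE}[\|\uderh^-\|_{C}^p]^{1/2}(\dots)^{1/2}$ does \emph{not} arise from BDG plus absorption of the martingale. It comes from the $J_{\delta,\epsilon}^-$ and $L_{\delta,\epsilon}^-$ terms: by Lemma \ref{Lemma_prop_Jd} and Lemma \ref{Lemma_Lde}, $J_{\delta,\epsilon}^-(r)\lesssim_\delta r_-$ and $L_{\delta,\epsilon}^-(r)\lesssim_\delta(G_{\delta,\epsilon}(r)+1)r_-$, i.e. these functions scale \emph{linearly} in $r_-$, not like $r_-^{4-n}$ or $r_-^{4-\nu}$. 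Consequently $\int_0^T\int J_{\delta,\epsilon}^-(\partial_x^2\uderh)^2\lesssim_\delta\|\uderh^-\|_C\,\|\partial_x^2\uderh\|_{L^2}^2$ and $\int L_{\delta,\epsilon}^-(\uderh)\lesssim_\delta\|\uderh^-\|_C(1+\int G_{\delta,\epsilon}(\uderh))$; raising to the $p/2$-th power and applying Cauchy--Schwarz in $\hat{\EE}$, together with the $(R,\epsilon,\delta)$-uniform entropy estimate of Lemma \ref{Lemma_Entropy_Est}, yields exactly the first term of \eqref{Eq111}. Second, the powers $(4-n)\tfrac{p}{2}$ and $(4-\nu)\tfrac{p}{2}$ come from the BDG term, not from $J_{\delta,\epsilon}^-,L_{\delta,\epsilon}^-$. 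After one integration by parts the integrand in the stochastic integral is $\int_0^{\uderh}L_{\delta,\epsilon}''F_{\delta,\epsilon}\,dr\cdot\partial_x\sigma_k$, and by Lemma \ref{Lemma_prop_Int} (estimate \eqref{Eq43}) one has $|\int_0^{r}L_{\delta,\epsilon}''F_{\delta,\epsilon}|\lesssim_{n,\nu}r_+^{n/2-1}+C_{\delta,n,\nu}(r_-^{2-n/2}+r_-^{2-\nu/2})$; squaring inside the quadratic variation produces $u_-^{4-n}$ and $u_-^{4-\nu}$. This is a direct pointwise bound — no Cauchy--Schwarz factorization of $\langle M\rangle$ through $\sup_t\int L_{\delta,\epsilon}^+$ is used, and indeed it is not apparent that $\langle M\rangle$ factors this way, since $\int_0^u L_{\delta,\epsilon}''F_{\delta,\epsilon}=\int_0^u J_{\delta,\epsilon}/F_{\delta,\epsilon}$ bears no obvious relation to $L_{\delta,\epsilon}^+(u)^{1/2}$. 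You would need to supply that argument or, more simply, replace it with the pointwise bound from Lemma \ref{Lemma_prop_Int}. Finally, you also omit the It\^o correction terms (the $J_{\delta,\epsilon}(\uderh)(\partial_x\sigma_k)^2$ and $I_{\delta,\epsilon}(\uderh)\partial_x((\partial_x\sigma_k)\sigma_k)$ integrals, estimated via \eqref{Eq32}, \eqref{Eq37}, \eqref{Eq36}), which are the source of the $\sup_t\|\uderh^+(t)\|_{L^1}^{(n-2)p/2}$ term.
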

	{\color{OliveGreen}\begin{rem}
			The appearance of the subscripts $(n,\nu,p,T)$ and $(n,\nu,p)$ in the $\delta$-dependent constants on the right-hand side of \eqref{Eq111} may seem surprising since they are already present in the subscript of the $\lesssim$-relation. However, they are indispensable since the constants $C_{\delta,n,\nu,p,T}$ and $C_{\delta,n,\nu,p}$ do not necessarily factorize into a $\delta$-dependent and $\delta$-independent part.
	\end{rem}}
	\begin{proof}[Proof of Lemma \ref{Lemma_log_Entr_Approx_level}]Since we assumed $u_0\in L^\infty(\Omega, \mathfrak{F}_0{\cO;} H^1(\TT))${\cO, Lemma \ref{Lemma_Ex_uder} is applicable.}
	To ease notation write $u$ for $\uder $ and drop the hat notation during this proof. {\cO Moreover, we make use of the technical estimates proved in Lemmas \ref{Lemma_prop_Jd}--\ref{Lemma_Lde}.} We apply It\^o's formula as in \cite[Proposition A.1]{DHV_16} to the composition of $L_{\delta,\epsilon}$ with $u$ and check the assumptions. 
	Firstly, by its definition \eqref{Eq_Lde}, $L_{\delta,\epsilon}$ admits $\frac{J_{\delta,\epsilon}}{F_{\delta,\epsilon}^2}$ as its second derivative, which needs to be bounded. 
	Since $F_{\delta,\epsilon}$ is bounded away from $0$, we have
	\[
	\frac{|J_{\delta,\epsilon}(r)|}{F_{\delta,\epsilon}^2(r)}\, \lesssim_{\delta,\epsilon}\,
	\frac{|J_{\delta,\epsilon}(r)|}{F_{\delta,\epsilon}(r)},
	\]
	which is indeed bounded by \eqref{Eq139} and \eqref{Eq189}.
	We observe that $u\in L^2(\Omega{\cO;} C([0,T]{\cO;} H^1(\TT)))$
and further  that 
		\begin{equation}\label{Eq113}
		F_{\delta,\epsilon}(r)\, \le \, K_\epsilon^\frac{n}{2}(r)\, \lesssim_n \, |r|^\frac{n }{2}\,+\, 1  
	\end{equation}
	by \eqref{Eq152} and \eqref{Eq_Fd}. Thus, also the quantity
	\begin{align}&
		\EE\Bigl[\| F_{\delta,\epsilon}^2(u)\partial_x^3 u\|_{L^2([0,T]\times \TT)}^2\Bigr]
		\\&\quad \le \, 
		\EE\Bigl[\| F_{\delta,\epsilon}(u)\|_{C([0,T]\times \TT)}^2 \| F_{\delta,\epsilon}(u)\partial_x^3 u\|_{L^2([0,T]\times \TT)}^2 \Bigr]
		\\&\quad \le \, 
		\EE\Bigl[\| F_{\delta,\epsilon}(u)\|_{C([0,T]\times \TT)}^\frac{2(n+2)}{n} \Bigr]^\frac{n}{n+2} 	\EE\Bigl[\|F_{\delta,\epsilon}(u)\partial_x^3 u\|_{L^2([0,T]\times \TT)}^{n+2} \Bigr]^\frac{2}{n+2}
		\\&\quad 
		\overset{\eqref{Eq113}}{\lesssim_n}\, 
			\EE\Bigl[\| u\|_{C([0,T]\times \TT)}^{n+2}+1 \Bigr ]^\frac{n}{n+2} 	\EE\Bigl[\|F_{\delta,\epsilon}(u)\partial_x^3 u\|_{L^2([0,T]\times \TT)}^{n+2} \Bigr]^\frac{2}{n+2}
			\\&\quad \le \,
			\Bigl(\EE\Bigl[\| u\|_{C([0,T]\times \TT)}^{n+2} \Bigr]^\frac{n}{n+2} +1\Bigr) \times 	\EE\Bigl[\|F_{\delta,\epsilon}(u)\partial_x^3 u\|_{L^2([0,T]\times \TT)}^{n+2} \Bigr]^\frac{2}{n+2}
	\end{align}
	 is finite due to \eqref{Eq114}. 
	 We observe that
	 \begin{align} \label{Eq116}\noeqref{Eq116}
		|F_{\delta,\epsilon}'(r)|\,=\, |F_{\delta}'(K_\epsilon(r)) K_\epsilon'(r)| \, \lesssim_{n,\nu}\, K_\epsilon^{\frac{n}{2}-1}(r)\, \le \, |r|^{\frac{n}{2}-1} \,+\, 1 
	 \end{align}
 	by the chain rule, \eqref{Eq152}, \eqref{Eq110} and \eqref{Eq44} and recall the consequence 
	\begin{equation}\label{Eq121}
		\sum_{k\in \ZZ} \|\sigma_k\|_{C^2(\TT)}^2\,<\, \infty,
	\end{equation}
	see
	\cite[Eq. (2.2f)]{dareiotis2021nonnegative}, of condition \eqref{Eq120}.
	 Hence, we can follow \cite[p.11]{dareiotis2023solutions} to obtain that 
	\begin{align}&
		\EE\biggl[
	\biggl|
	\int_0^T \int_{\TT} 
	\sum_{k\in \ZZ} \sigma_k F_{\delta,\epsilon}'(\uderh) \partial_x (\sigma_k F_{\delta,\epsilon}(\uderh) )
	\, dx\, dt
	\biggr|^2
		\biggr]\,
		\overset{{\cO\eqref{Eq113}\text{--} \eqref{Eq121}}}{\lesssim_{n,\nu,\sigma, T}}\,\EE\biggl[
		\sup_{t\in [0,T]}\| u\|_{H^1(\TT)}^{2n-2}
		\biggr]\,+\, 1
	\end{align}
and
	\begin{align}&
	\EE\biggl[
	{\cO\sum_{k\in \ZZ}}
	\int_0^T 
	\bigl\|\partial_x ( \sigma_k F_{\delta,\epsilon}(u))\bigr\|_{L^2(\TT)}^2
	\, dt
	\biggr]\,
	\overset{{\cO\eqref{Eq113}\text{--}\eqref{Eq121}}}{\lesssim_{n,\nu,\sigma, T}}\,\EE\biggl[
	\sup_{t\in [0,T]}\| u\|_{H^1(\TT)}^{n}
	\biggr]\,+\, 1
\end{align}
are finite by \eqref{Eq114}.
	Hence, the assumptions of \cite[Proposition A.1]{DHV_16} are verified and it follows
	\begin{align}&
		\int_{\TT}L_{\delta,\epsilon}(u(t))\, dx\,-\, \int_{\TT}
		L_{\delta,\epsilon}(u(0)) \,{\cO dx}
		\\&\quad
		=\, \int_0^t\int_{\TT}
		L_{\delta,\epsilon}''(u) \partial_x u \bigl(F_{\delta,\epsilon}^2(u) \partial_x^3 u\bigr)\, dx\, ds
		\\&\qquad
		-\, \tfrac{1}{2}\sum_{k\in \ZZ} \int_0^t \gamma_u^2 \int_{\TT}
		L_{\delta,\epsilon}''(u) \partial_x u\bigl( \sigma_k F_{\delta,\epsilon}'(u)) \partial_x (\sigma_k F_{\delta,\epsilon}(u) \bigr) \, dx\, ds
		\\&\qquad
		+\, \tfrac{1}{2}\sum_{k\in \ZZ} \int_0^t \gamma_u^2 \int_{\TT} 
		L_{\delta,\epsilon}''(u)(\partial_x (\sigma_k F_{\delta,\epsilon}(u)))^2
		\, dx\, ds
		\\&\qquad
		+\, \sum_{k\in \ZZ} \int_0^t \gamma_u\int_{\TT}
		L_{\delta,\epsilon}'(u)\partial_x (\sigma_k F_{\delta,\epsilon}(u))	
		\, dx\, d\beta^{(k)}_s,
		\end{align}
	where we use again the notation $\gamma_u = g_R(\|u\|_{C(\TT)})$. Integrating by parts several times leads to
	\begin{align}
			&
		\int_{\TT}L_{\delta,\epsilon}(u(t))\, dx\,-\, \int_{\TT}
		L_{\delta,\epsilon}(u(0))  \,{\cO dx}
		\\&\quad
		\overset{\eqref{Eq_Lde}}{=}\, \int_0^t\int_{\TT}
		J_{\delta,\epsilon}(u) \partial_x u \partial_x^3 u\, dx\, ds
		\\&\qquad
		+\, {\cO \tfrac{1}{2}}\sum_{k\in \ZZ} \int_0^t \gamma_u^2 \int_{\TT}
		L_{\delta,\epsilon}''(u) \partial_x \sigma_k F_{\delta,\epsilon}(u) \partial_x ( \sigma_k F_{\delta,\epsilon}(u))
		\,dx\, ds
		\\&\qquad
		+\, \sum_{k\in \ZZ}\int_0^t\gamma_u  \int_{\TT}   
		L_{\delta,\epsilon}'(u) \partial_x (\sigma_k F_{\delta,\epsilon}(u))
		\, dx\, d\beta_s^{(k)}
		\\& \quad=\, 
		 -\,\int_0^t \int_{\TT} J_{\delta,\epsilon}(u) (\partial_x^2 u)^2\, dx\, ds
		\\&\qquad
		-\,  {\cO \int_0^t }\int_{\TT} J_{\delta,\epsilon}'(u) (\partial_xu)^2 \partial_x^2 u\, dx\, ds
		\\&\qquad
		+\,  {\cO \tfrac{1}{2}}\sum_{k\in \ZZ} \int_0^t \gamma_u^2\int_{\TT}
		L_{\delta,\epsilon}''(u) (\partial_x\sigma_k)^2 (F_{\delta,\epsilon}(u))^2
		\, dx\, ds
		\\&\qquad
		+\,  {\cO \tfrac{1}{2}}
		\sum_{k\in \ZZ} \int_0^t \gamma_u^2 \int_{\TT}
		L_{\delta,\epsilon}''(u) (\partial_x \sigma_k)\sigma_k F_{\delta,\epsilon}(u) F_{\delta,\epsilon}'(u)\partial_x u\, dx\,ds
		\\& \qquad
		+\, \sum_{k\in \ZZ}\int_0^t \gamma_u \int_{\TT}   
		L_{\delta,\epsilon}'(u) \partial_x (\sigma_k F_{\delta,\epsilon}(u))
		\, dx\, d\beta_s^{(k)}
		\\&\quad
		\overset{\eqref{Eq_Lde}}{=}\,  -\,\int_0^t \int_{\TT} J_{\delta,\epsilon}(u) (\partial_x^2 u)^2\, dx\, ds
		\\&\qquad
		+\, \tfrac{1}{3}{\cO \int_0^t }\int_{\TT} J_{\delta,\epsilon}''(u) (\partial_xu)^4\, dx\, ds
		\\&\qquad
		+\, {\cO \tfrac{1}{2} } \sum_{k\in \ZZ}\int_0^t \gamma_u^2\int_{\TT}
		J_{\delta,\epsilon}(u) (\partial_x \sigma_k)^2
		\, dx\, ds
		\\&\qquad
		-\,{\cO \tfrac{1}{2} }  \sum_{k\in \ZZ}\int_0^t \gamma_u^2\int_{\TT}
		\int_0^u
		L_{\delta,\epsilon}''(r) F_{\delta,\epsilon}(r) F_{\delta,\epsilon}'(r)
		\, dr \, \partial_x (   (\partial_x \sigma_k ) \sigma_k )
		\, dx\, ds
		\\& \qquad
		+\, \sum_{k\in \ZZ}\int_0^t \gamma_u \int_{\TT}   
		L_{\delta,\epsilon}'(u) \partial_x (\sigma_k F_{\delta,\epsilon}(u))
		\, dx\, d\beta_s^{(k)}.
	\end{align}
	We introduce the function 
	\begin{equation}\label{Eq127}
	I_{\delta,\epsilon}(r)\,=\, 
	\int_0^r
	L_{\delta,\epsilon}''(r') F_{\delta,\epsilon}(r') F_{\delta,\epsilon}'(r')
	\, dr'
	\end{equation}
and use \eqref{Eq_Jde} to rearrange this to 
\begin{align}&
	\int_{\TT} L_{\delta,\epsilon}^+(u(t))\, dx\,+\, \int_0^t \int_{\TT}
	J_{\delta,\epsilon}^+(u)(\partial_x^2 u)^2
	\, dx\,ds\,
	+\, \tfrac{1}{3}\int_0^t \int_{\TT} 
	(F_{\delta,\epsilon}''(u))^2(\partial_x u)^4
	\, dx\, ds
	\\&\quad
	=\, 
	\int_0^t \int_{\TT} J_{\delta,\epsilon}^-(u) (\partial_x^2u)^2\, dx\, ds
	\,+\, \int_{\TT} L_{\delta,\epsilon}^-(u(t))\, dx
	\,+\,\int_{\TT} L_{\delta,\epsilon}(u(0))\, dx
	\\&\qquad
	+\, {\cO \tfrac{1}{2} } \sum_{k\in \ZZ} \int_0^t \gamma_u^2 \int_{\TT}J_{\delta,\epsilon}(u) (\partial_x \sigma_k)^2\, dx\,ds\,-\,{\cO \tfrac{1}{2} }  \sum_{k\in \ZZ}
	\int_0^t \gamma_u^2 \int_{\TT}I_{\delta,\epsilon}(u) \partial_x ( (\partial_x \sigma_k)\sigma_k)\, dx\,ds
	\\&\qquad
	+\, \sum_{k\in \ZZ}\int_0^t \int_{\TT}   
	L_{\delta,\epsilon}'(u) \partial_x (\sigma_k F_{\delta,\epsilon}(u))
	\, dx\, d\beta_s^{(k)}.
\end{align}

Taking absolute values, the $\frac{p}{2}$-th power and the supremum in time on both sides, this results in 
\begin{align}&
	\sup_{t\in [0,T]}	\biggr(
	\int_{\TT} L_{\delta,\epsilon}^+(u(t))\, dx\biggl)^\frac{p}{2}\,+\, \biggl(\int_0^T \int_{\TT}
	J_{\delta,\epsilon}^+(u)(\partial_x^2 u)^2
	\, dx\,ds\biggr)^\frac{p}{2}\,
	+\, \biggl(\int_0^T \int_{\TT} 
	(F_{\delta,\epsilon}''(u))^2(\partial_x u)^4
	\, dx\, ds\biggr)^\frac{p}{2}
	\\&\quad
	\lesssim_p\, 
	\biggl(	\int_0^T\int_{\TT} J_{\delta,\epsilon}^-(u) (\partial_x^2u)^2\, dx\, ds \biggr)^\frac{p}{2}\,+\, \sup_{t\in [0,T]}\biggl(\int_{\TT} L_{\delta,\epsilon}^-(u(t))\, dx\biggr)^\frac{p}{2}\,+\, \biggl(
	\int_{\TT} L_{\delta,\epsilon}^+(u(0))\, dx
	\biggr)^\frac{p}{2}
	\\&\qquad +\, 
	\biggl(\sum_{k\in \ZZ}\int_0^T\int_{\TT}|J_{\delta,\epsilon}(u)| (\partial_x \sigma_k)^2\, dx\,ds\biggr)^\frac{p}{2}\,+\,\biggl( \sum_{k\in \ZZ}
	\int_0^T \int_{\TT}|I_{\delta,\epsilon}(u) \partial_x ( (\partial_x \sigma_k)\sigma_k)|\, dx\,ds\biggr)^\frac{p}{2}
	\\&\qquad 
	+\, \sup_{t\in [0,T]} \biggl|\sum_{k\in \ZZ}\int_0^t \int_{\TT}   
	L_{\delta,\epsilon}'(u) \partial_x (\sigma_k F_{\delta,\epsilon}(u))
	\, dx\, d\beta_s^{(k)}\biggr|^\frac{p}{2}.
\end{align}
Taking the expectation and applying the Burkholder-Davis-Gundy inequality, we conclude
\begin{align}&
	\EE\biggl[
	\sup_{t\in [0,T]}	\biggr(
	\int_{\TT} L_{\delta,\epsilon}^+(u(t))\, dx\biggl)^\frac{p}{2}\,+\, \biggl(\int_0^T \int_{\TT}
	J_{\delta,\epsilon}^+(u)(\partial_x^2 u)^2
	\, dx\,ds\biggr)^\frac{p}{2}
	\biggr]
	\\&\qquad
	+\, 	\EE\biggl[\biggl(\int_0^T \int_{\TT} 
	(F_{\delta,\epsilon}''(u))^2(\partial_x u)^4
	\, dx\, ds\biggr)^\frac{p}{2}
	\biggr]
	\\&\quad
	\lesssim_p\, 
	\EE\biggl[\biggl(	\int_0^T\int_{\TT} J_{\delta,\epsilon}^-(u) (\partial_x^2u)^2\, dx\, ds \biggr)^\frac{p}{2}\,+\, \sup_{t\in [0,T]}\biggl(\int_{\TT} L_{\delta,\epsilon}^-(u(t))\, dx\biggr)^\frac{p}{2}\,+\, \biggl(
	\int_{\TT} L_{\delta,\epsilon}^+(u(0))\, dx
	\biggr)^\frac{p}{2}\biggr]
	\\&\qquad
	+\, \EE\biggl[
	\biggl(\sum_{k\in \ZZ}\int_0^T\int_{\TT}|J_{\delta,\epsilon}(u)| (\partial_x \sigma_k)^2\, dx\,ds\biggr)^\frac{p}{2}\,+\,\biggl( \sum_{k\in \ZZ}
	\int_0^T \int_{\TT}|I_{\delta,\epsilon}(u) \partial_x ( (\partial_x \sigma_k)\sigma_k)|\, dx\,ds\biggr)^\frac{p}{2}
	\biggr]
	\\& \qquad
	+\, \EE\biggl[ \biggl(\sum_{k\in \ZZ}\int_0^T \biggl(\int_{\TT}   
	L_{\delta,\epsilon}'(u) \partial_x (\sigma_k F_{\delta,\epsilon}(u))
	\, dx\biggr)^2\, ds\biggr)^\frac{p}{4}\biggr].
\end{align}
To simplify the last term, we obtain by integration by parts that
\begin{align}&
	\int_{\TT}   
	L_{\delta,\epsilon}'(u) \partial_x (\sigma_k F_{\delta,\epsilon}(u))
	\, dx
	\,=\, -\,	\int_{\TT}   
	L_{\delta,\epsilon}''(u) \partial_x u \sigma_k F_{\delta,\epsilon}(u)
	\, dx\,
	=\, 
	\int_{\TT}   \int_0^u
	L_{\delta,\epsilon}''(r)F_{\delta,\epsilon}(r)\, dr\,  \partial_x \sigma_k 
	\, dx.
\end{align}
Using again \eqref{Eq121}, it follows
\begin{align}\begin{split}\label{Eq63}
	&\EE\biggl[
	\sup_{t\in [0,T]}	\biggr(
	\int_{\TT} L_{\delta,\epsilon}^+(u(t))\, dx\biggl)^\frac{p}{2}\,+\, \biggl(\int_0^T \int_{\TT}
	J_{\delta,\epsilon}^+(u)(\partial_x^2 u)^2
	\, dx\,ds\biggr)^\frac{p}{2}
	\biggr]
	\\&\qquad
	+\, 	\EE\biggl[\biggl(\int_0^T \int_{\TT} 
	(F_{\delta,\epsilon}''(u))^2(\partial_x u)^4
	\, dx\, ds\biggr)^\frac{p}{2}
	\biggr]
		\\&\quad
		\lesssim_{p,\sigma}\, 
		\EE\biggl[\biggl|	\int_0^T\int_{\TT} J_{\delta,\epsilon}^-(u) (\partial_x^2u)^2\, dx\, ds \biggr|^\frac{p}{2}\,+\, \sup_{t\in [0,T]}\biggl|\int_{\TT} L_{\delta,\epsilon}^-(u(t))\, dx\biggr|^\frac{p}{2}\,+\, \biggl|
		\int_{\TT} L_{\delta,\epsilon}^+(u(0))\, dx
		\biggr|^\frac{p}{2}\biggr]
		\\&\qquad
		+\, \EE\biggl[
		\biggl|\int_0^T\int_{\TT}|J_{\delta,\epsilon}(u)|\, dx\,ds\biggr|^\frac{p}{2}\,+\,\biggl| 
		\int_0^T \int_{\TT}|I_{\delta,\epsilon}(u)|\, dx\,ds\biggr|^\frac{p}{2}
		\biggr]
		\\&\qquad
		+\, \EE\biggl[ \biggl|\int_0^T \biggl(\int_{\TT}   \biggl|\int_0^u
		L_{\delta,\epsilon}''(r)F_{\delta,\epsilon}(r)\, dr\biggr| 
		\, dx\biggr)^2\, ds\biggr|^\frac{p}{4}\biggr].
	\end{split}
\end{align}
We use the properties of the approximate functions appearing in the right hand side of \eqref{Eq63} proved {\cO in Lemmas \ref{Lemma_prop_Jd}--\ref{Lemma_Lde}} to estimate
\begin{align}
			&\EE\biggl[
		\sup_{t\in [0,T]}	\biggr(
		\int_{\TT} L_{\delta,\epsilon}^+(u(t))\, dx\biggl)^\frac{p}{2}\,+\, \biggl(\int_0^T \int_{\TT}
		J_{\delta,\epsilon}^+(u)(\partial_x^2 u)^2
		\, dx\,ds\biggr)^\frac{p}{2}
		\biggr]
		\\&\qquad
		+\, 	\EE\biggl[\biggl(\int_0^T \int_{\TT} 
		(F_{\delta,\epsilon}''(u))^2(\partial_x u)^4
		\, dx\, ds\biggr)^\frac{p}{2}
		\biggr]
		\\&\quad \lesssim_{n,\nu,p,\sigma}\,
		C_{\delta,n,\nu,p}\EE\biggl[
		\|u_-\|_{C([0,T]\times \TT)}^\frac{p}{2}\biggl(
		\int_0^T\int_{\TT}(\partial_x^2 u)^2\, dx\, ds
		\biggr)^\frac{p}{2}
		\biggr]
		\\&\qquad +\, C_{\delta,n,\nu,p}\EE\biggl[
		\|u_-\|_{C([0,T]\times \TT)}^\frac{p}{2}\biggl(1+\sup_{t\in [0,T]}\int_{\TT}G_{\delta,\epsilon}(u(t))\, dx\biggr)^\frac{p}{2}
		\biggr]
		\\&\qquad
		+\, \EE\biggl[\biggl(
		\int_{\TT} (u(0)-1)- \log(u(0))\, dx\biggr)^\frac{p}{2}\biggr]
		\\&\qquad
		+\, \EE\biggl[
		\biggl(
		\int_0^T \int_{\TT} u_+^{n-2}\,dx\, ds
		\biggr)^\frac{p}{2}
		\biggr]\,+\, C_{\delta,n,\nu,p}\EE\biggl[\biggl(
		\int_{0}^{T}\int_{\TT} u_-\, dx\, dt\biggr)^\frac{p}{2}
		\biggr]
		\\&\qquad
		+\, \EE\biggl[
		\biggl(
		\int_0^T \biggl(
		\int_{\TT} u_+^{\frac{n}{2}-1}\,+\, C_{\delta,n,\nu}(u_-^{2-\frac{n}{2}}\,+\, u_-^{2-\frac{\nu}{2}}) \, dx
		\biggr)^2\, ds
		\biggr)^\frac{p}{2}
		\biggr]
		\\&\quad
		\lesssim_{n,p,T}\,
		C_{\delta,n,\nu,p}\EE\Bigl[
		\|u_-\|_{C([0,T]\times \TT)}^p\Bigr]^\frac{1}{2} \EE\biggl[\biggl(
		\int_0^T\int_{\TT}(\partial_x^2 u)^2\, dx\, ds
		\biggr)^p
		\biggr]^\frac{1}{2}
		\\&\qquad+\,  C_{\delta,n,\nu,p} \EE\Bigl[
		\|u_-\|_{C([0,T]\times \TT)}^p\Bigr]^\frac{1}{2}
		\biggl(1\,+\,\EE\biggl[\biggl(\sup_{t\in [0,T]}\int_{\TT}G_{\delta,\epsilon}(u)\, dx \biggr)^p\biggr] \biggr)^\frac{1}{2}
		\\&\qquad+\, \EE\biggl[\biggl(
		\int_{\TT} (u(0)-1) -\log(u(0))\, dx\biggr)^\frac{p}{2}\biggr]
		\\&\qquad+\,
		\EE\biggl[
		\sup_{t\in [0,T]}\|u_+(t)\|_{L^1(\TT)}^{(n-2)\frac{p}{2}} 
		\biggr]
		\,+\, C_{\delta,n,\nu,p}\EE\Bigl[
		\|u_-\|_{C([0,T]\times \TT)}^{(4-n)\frac{p}{2}}\,+\,
		\|u_-\|_{C([0,T]\times \TT)}^{(4-\nu)\frac{p}{2}} \Bigl].
	\end{align}
We note that we used the assumption $u_0\ge \delta$, when applying \eqref{Eq142}. Employing the entropy estimate  \eqref{Eq_Entr_E} {\cO from Lemma \ref{Lemma_Entropy_Est}} we arrive at \eqref{Eq111}.
\end{proof}

\begin{lemma}[{\cO $\delta$-Uniform Energy Estimate}]\label{Lemma_EE}
	Let $p> n+2$, $u_0 \in L^\infty(\Omega, \mathfrak{F}_0, H^1(\TT))$ such that $u_0\ge \delta$. Then 
	any weak martingale solution
	\[
	\bigl\{
	(\check{\Omega}, \check{\mathfrak{A}},\check{\mathfrak{F}},\check{\PP} ), \,(\check{\beta}^{(k)})_{k\in \ZZ},\, \udc
	\bigr\}
	\]
	to {\cO\eqref{Eq107b} in the sense of Definition \ref{defi_sol_approx}} with initial value $u_0$ constructed in Lemma \ref{Lemma_Ex_ud} satisfies
	\begin{align}\begin{split}
			\label{Eq115}
		&
		\check{\EE}	\biggl[
		\sup_{t\in [0,T]} \|\partial_x \check{u}_\delta(t)\|_{L^2(\TT)}^p\,+\,\biggl(\int_0^{T} \int_{\TT} F_{\delta}^2(\check{u}_\delta)(\partial_x^3 \check{u}_\delta)^2\,dx\, ds\biggr)^\frac{p}{2}
		\biggr]
		\\&\quad  \lesssim_{n,\nu, \sigma, p,T}
		\, {\EE}\biggl[
		\|\partial_x u_0\|_{L^2(\TT)}^p
	\,
		+\, {\color{OliveGreen}\biggl|\int u_0\, dx\biggr|^\frac{p(n+2)}{8-2n}}
		\,+\, \EE\biggl(
		\int_{\TT} (u_0-1)-\log(u_0)\, dx\biggr)^\frac{p}{2}\biggr]
		\,+\, 1.
		\end{split}
	\end{align}
\end{lemma}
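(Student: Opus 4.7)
The plan is to derive a combined energy-plus-$\log$-entropy It\^o expansion on the approximate level, namely for $\uderh$ solving \eqref{Eq108}, in such a way that the $\delta$-dependent remainder term in the bare energy expansion \eqref{Eq9} is absorbed by the dissipation generated by the $\log$-entropy. After closing this combined estimate uniformly in $(R,\epsilon)$, I would pass to the joint limit $R\to\infty$, $\epsilon\searrow 0$ and argue that the $\delta$-dependent residues vanish because the approximations converge to a non-negative limit $\udc$.

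First, I would apply It\^o's formula to $\tfrac12\|\partial_x \uderh(t)\|_{L^2(\TT)}^2$ exactly as in Appendix \ref{App_B} (cf.\ \cite[Eq.~(4.10)]{dareiotis2021nonnegative}) to obtain the energy identity
\[
\tfrac12 d\|\partial_x \uderh\|_{L^2(\TT)}^2 \,=\, -\|F_{\delta,\epsilon}(\uderh)\partial_x^3\uderh\|_{L^2(\TT)}^2\, dt\,+\, dM\,+\, dR,
\]
where $M$ is a local martingale and the Stratonovich-It\^o remainder $dR$ contains, by \eqref{Eq9}, the offending $\delta$-critical contribution $\tfrac12\int_{\TT}(F_{\delta,\epsilon}''(\uderh))^2(\partial_x \uderh)^4\,dx\,dt$ plus lower-order terms which admit $(R,\epsilon,\delta)$-uniform bounds via the entropy dissipation term $\|\partial_x^2\uderh\|_{L^2}^2$ supplied by Lemma \ref{Lemma_Entropy_Est} and the properties of $F_{\delta,\epsilon}$ from Appendix \ref{App_A}. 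The key observation is that this offending term also appears as a \emph{dissipation} in the $\log$-entropy identity established in the course of proving Lemma \ref{Lemma_log_Entr_Approx_level}, with coefficient $\tfrac13$ and the correct (positive) sign.

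Next, I would add a suitable positive multiple $\alpha\ge \tfrac32$ of the $\log$-entropy identity to the energy identity above so that the $\tfrac12\int(F_{\delta,\epsilon}''(\uderh))^2(\partial_x\uderh)^4\,dx$ term is more than cancelled by $\alpha\cdot\tfrac13\int(F_{\delta,\epsilon}''(\uderh))^2(\partial_x\uderh)^4\,dx$. All remaining deterministic terms arising on the right-hand side of the combined identity are either non-positive (and thus discarded) or admit $(R,\epsilon,\delta)$-uniform bounds using the estimates on $J_{\delta,\epsilon}$, $L_{\delta,\epsilon}$, and $I_{\delta,\epsilon}$ from Appendix \ref{App_A}, together with the already-established entropy bound \eqref{Eq_Entr_E}. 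Applying the Burkholder-Davis-Gundy inequality to the martingale part and a Gronwall-type closure, I would obtain an estimate of the form of \eqref{Eq115} for $\uderh$, up to the spurious $\delta$-dependent terms appearing in \eqref{Eq111}, which are all multiplied by positive powers of $\|\uderh^-\|_{C([0,T]\times \TT)}$.

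Finally, I would pass to the limits $R\to\infty$ and $\epsilon\searrow 0$ using the stochastic compactness argument already performed in the proof of Lemma \ref{Lemma_Ex_ud}: along an appropriate subsequence, the constructed $\uderh$ converge uniformly on $[0,T]\times\TT$ (a.s.) to the non-negative limit $\udc$, so $\|\uderh^-\|_{C([0,T]\times\TT)}\to 0$ almost surely and, by a uniform integrability argument using \eqref{Eq114}, also in expectation against the polynomial weights appearing in \eqref{Eq111}. Lower semi-continuity of norms under weak convergence and Fatou's lemma then yield \eqref{Eq115} for $\udc$ with $\delta$-independent constants. The main obstacle will be the precise bookkeeping in the cancellation step: one must verify that the algebraic prefactors and signs of $(F_{\delta,\epsilon}''(\uderh))^2(\partial_x\uderh)^4$ in both identities line up so that a genuine absorption is possible, and that all leftover terms (in particular those involving $J_{\delta,\epsilon}^-$ and the error terms coming from the $\sigma_k$-derivatives in the Stratonovich correction) can be controlled by quantities uniform in $\delta$. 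The compatibility conditions \eqref{Eq135}--\eqref{Eq106} on the pair $(n,\nu)$ are designed precisely to make this matching succeed throughout the entire range $n\in(2,3)$.
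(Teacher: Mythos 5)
Your proposal follows essentially the same route as the paper: derive the It\^o expansion of the energy for $\uderh$ (Appendix \ref{App_B}), identify the $(F_{\delta,\epsilon}''(\uderh))^2(\partial_x\uderh)^4$-term as the critical obstruction, control it via the $\log$-entropy, and then pass to the limits $R\to\infty$, $\epsilon\searrow 0$, using uniform integrability and the Lipschitz continuity of $r\mapsto r_-$ together with the non-negativity of $\udc$ to kill the $\|\uderh^-\|_{C([0,T]\times\TT)}$-weighted, $\delta$-dependent residues. The one organizational difference is that the paper does not algebraically add a multiple of the $\log$-entropy identity to the energy identity; instead it first proves Lemma \ref{Lemma_log_Entr_Approx_level} as a standalone \emph{a priori} estimate giving a moment bound on $\int_0^T\int_{\TT}(F_{\delta,\epsilon}''(\uderh))^2(\partial_x\uderh)^4$, and then simply inserts that bound after applying Burkholder--Davis--Gundy to the energy expansion. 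This avoids having to track signs and prefactors for a hard cancellation, and it also lets the same bound absorb the sub-leading $(\partial_x u)^3$- and $(\partial_x u)^2$-terms via Young's inequality and the pointwise estimates \eqref{Eq53}--\eqref{Eq58}.

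Two small slips to flag. First, your claim that adding $\alpha\ge\tfrac32$ times the $\log$-entropy identity cancels the offending term ignores the coefficient $\sum_k\gamma^2\sigma_k^2$ in the energy expansion; the correct threshold is $\alpha\gtrsim_\sigma \tfrac32$, so the constant is $\sigma$-dependent. This is harmless but should be said; alternatively, use the paper's plug-in strategy, which sidesteps the bookkeeping entirely. Second, appealing to \eqref{Eq114} for the uniform integrability needed in the $\epsilon\searrow 0$ limit does not work: \eqref{Eq114} carries constants depending on $\delta,\epsilon,R$, so it is not uniform in $\epsilon$. The paper instead invokes the $(R,\epsilon)$-uniform moment bound on $\|\check{u}_{\delta,\epsilon}\|_{C([0,T]\times\TT)}$ (namely \cite[Eq.~(4.24)]{dareiotis2021nonnegative}), which is the correct ingredient for Vitali's convergence theorem here.
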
 
\begin{proof}Since we assumed $u_0\in L^\infty(\Omega, \mathfrak{F}_0{\cO;} H^1(\TT))$ and $u_0\ge \delta$, we have in particular {\color{OliveGreen}$\EE[|\int u_0\, dx|^{2p}]<\infty$} and $\EE[\|G_\delta(u_0)\|_{L^1(\TT)}^{p}]<\infty$ so that Lemma \ref{Lemma_Ex_ud} is indeed applicable.
	We verify a version of \eqref{Eq115} on the level of the approximations $\uderh$ of $\udc$. To this end, we use the It\^o expansion	{\cO\cite[Eq. (4.10)]{dareiotis2021nonnegative}} of the energy functional, i.e.,
	\begin{align}\begin{split}\label{Eq25}
			&
			\tfrac{1}{2}\|\partial_x \uderh(t)\|_{L^2(\TT)}^2\,=\, 
			\tfrac{1}{2}\|\partial_x \uderh(0)\|_{L^2(\TT)}^2\,-\, \int_0^t \int_{\TT} F_{\delta,\epsilon}^2(\uderh)(\partial_x^3 \uderh)^2\,dx\, ds
			\\&\qquad +\, {\cO\tfrac{1}{2}} \sum_{k\in \ZZ}\int_0^t \gamma_{\uderh}^2\int_{\TT}
			\sigma_k^2 (F_{\delta,\epsilon}''(\uderh))^2 (\partial_x \uderh)^4\, dx\, ds\\&\qquad
			+\,\tfrac{1}{16}\sum_{k\in \ZZ} \int_0^t \gamma_{\uderh}^2 \int_{\TT} (\partial_x (\sigma_k^2)) \bigl(
			(F_{\delta,\epsilon}^2)'''(\uderh)\,+\, 4 ((F_{\delta,\epsilon}')^2)'(\uderh)
			\bigr) (\partial_x \uderh)^3\,dx\,ds \\&\qquad+\,
			\tfrac{3}{2}
			\sum_{k\in \ZZ} \int_0^t \gamma_{\uderh}^2 \int_{\TT} ((\partial_x\sigma_k)^2 - \sigma_k (\partial_x^2\sigma_k)) (F_{\delta,\epsilon}'(\uderh))^2  (\partial_x \uderh)^2\,dx\,ds
			\\&\qquad+\,  			
			\tfrac{3}{16}\sum_{k\in \ZZ} \int_0^t \gamma_{\uderh}^2 \int_{\TT}
			 (\partial_x^2(\sigma_k^2)) (F_{\delta,\epsilon}^2)''(\uderh)(\partial_x \uderh)^2
			\,dx\,ds
			\\&\qquad+\,
			\tfrac{1}{8}\sum_{k\in \ZZ}\int_0^t  \gamma_{\uderh}^2 \int_{\TT} (4\sigma_k\partial_x^4 \sigma_k -\partial_x^4(\sigma_k^2)) F_{\delta,\epsilon}^2(\uderh)\,dx\, ds \\&\qquad+\, \sum_{k\in \ZZ}\int_0^t  \gamma_{\uderh}
			\int_{\TT} 
			\sigma_k F_{\delta,\epsilon}(\uderh)\partial_x^3 \uderh
			\,dx
			\, d{\cO\hat{\beta}^{(k)}},
		\end{split}
	\end{align}
	{\cO see Appendix \ref{App_B} for details}, where we use again the notation $\gamma_{\uderh} = g_R(\|\uderh\|_{C(\TT)})$.
	We proceed as in the proof of \cite[Lemma 4.6]{dareiotis2021nonnegative} and estimate the deterministic terms on the right-hand side separately. Here we use in particular \eqref{Eq121} together with the relation
	\begin{equation}\label{Eq117}
		\partial_x \sigma_k\,=\, 2\pi k \sigma_{-k},\quad k\in \ZZ
	\end{equation}
	from \cite[Eq. (2.2b)]{dareiotis2021nonnegative} to estimate the terms involving $\sigma_k$.  {\cO As another key ingredient, we make use of the technical estimates from Lemma \ref{Lemma_F_de_prop}.}
	
	\textbf{$(\partial_x u)^4$-Term. } We readily see that
	\begin{align}&
		\sum_{k\in \ZZ}\int_0^t \gamma_{\uderh}^2\int_{\TT}
		\sigma_k^2 (F_{\delta,\epsilon}''(\uderh))^2 (\partial_x \uderh)^4\, dx\, ds
		\\&\quad
		\overset{\eqref{Eq121}}{\lesssim_{\sigma}}\,
		\int_0^t \int_{\TT}
		(F_{\delta,\epsilon}''(\uderh))^2 (\partial_x \uderh)^4\, dx\, ds.
	\end{align}
	{\color{OliveGreen}The resulting right-hand side appears in the left-hand side of the approximate $\log$-entropy estimate \eqref{Eq111}, which we use later on, to close the energy estimate.}
	
	\textbf{$(\partial_x u)^3$-Term. } We estimate this term by absorbing it into the highest and lowest order term. To this end, we use the properties of the approximate function $F_{\delta,\epsilon}$, which we prove in Lemma \ref{Lemma_F_de_prop}, to conclude that
	\begin{align}&
		\biggl|\sum_{k\in \ZZ} \int_0^t \gamma_{\uderh}^2 \int_{\TT} (\partial_x (\sigma_k^2))\bigl(
		(F_{\delta,\epsilon}^2)'''(\uderh)\,+\, 4((F_{\delta,\epsilon}')^2)'(\uderh)
		\bigr) (\partial_x\uderh)^3\,dx\, ds\biggr|
		\\&\quad
		\overset{\eqref{Eq121}}{\lesssim_\sigma}\,
		\int_0^t \int_{\TT} \bigl|\bigl(
		(F_{\delta,\epsilon}^2)'''(\uderh)\,+\, 4((F_{\delta,\epsilon}')^2)'(\uderh)
		\bigr) (\partial_x\uderh)^3 \bigr|\,dx\, ds
		\\&\quad
		\overset{\eqref{Eq53}, \eqref{Eq54}}{\lesssim_{n,\nu}}
		\, \int_0^t \int_{\TT} (F_{\delta,\epsilon}(\uderh))^\frac{1}{2} (F_{\delta,\epsilon}''(\uderh))^\frac{3}{2} |\partial_x\uderh|^3\,dx\, ds
		\\&\quad
		\le
		\, \biggl(\int_0^t \int_{\TT} F_{\delta,\epsilon}^2(\uderh)\,dx\, ds\biggr)^\frac{1}{4}
		\,+\, 
		\biggl(\int_0^t\int_{\TT}
		(F_{\delta,\epsilon}''(\uderh))^2 (\partial_x\uderh)^4\,dx\, ds\biggr)^\frac{3}{4}
		\\&\quad\le \, 2\,+\, \int_0^t \int_{\TT} F_{\delta,\epsilon}^2(\uderh)\,dx\, ds
		\,+\, 
		\int_0^t\int_{\TT}
		(F_{\delta,\epsilon}''(\uderh))^2 (\partial_x\uderh)^4\,dx\, ds.
	\end{align}
	{\color{OliveGreen}While the third term on the right-hand side is estimated in \eqref{Eq111},  the second term is essentially a power of $u$, which we can estimate using conservation of mass together with the energy of $\uderh$.}
	
	\textbf{$(\partial_x u)^2$-Term. }
	Similarly, we conclude that
	\begin{align}&
		\biggl|\sum_{k\in \ZZ} \int_0^t \gamma_{\uderh}^2 \int_{\TT} \bigl(8((\partial_x\sigma_k)^2 - \sigma_k (\partial_x^2\sigma_k)) (F_{\delta,\epsilon}'(\uderh))^2\,+\, (\partial_x^2(\sigma_k^2)) (F_{\delta,\epsilon}^2)''(\uderh) \bigr) (\partial_x \uderh)^2\,dx\,ds\biggr|
		\\&\quad
		\overset{\eqref{Eq121}}{\lesssim_\sigma}
		\int_0^t  \int_{\TT}  \bigl((F_{\delta,\epsilon}'(\uderh))^2\,+\,  \bigl|(F_{\delta,\epsilon}^2)''(\uderh) \bigr|\bigr) (\partial_x \uderh)^2\,dx\,ds
		\\&\overset{\eqref{Eq57},\eqref{Eq58}}{\lesssim_{n,\nu}}\,  \int_0^t
		\int_{\TT}  F_{\delta,\epsilon}(\uderh) F_{\delta,\epsilon}''(\uderh) (\partial_x\uderh)^2\,dx\,ds
		\\&\quad
		\le \, \biggl(
		\int_0^t
		\int_{\TT}  F_{\delta,\epsilon}^2(\uderh)\,dx\,ds
		\biggr)^\frac{1}{2} \,+\, \biggl(
		\int_0^t
		\int_{\TT}   (F_{\delta,\epsilon}''(\uderh))^2 (\partial_x \uderh)^4\,dx\,ds
		\biggr)^\frac{1}{2}
		\\&\quad
		\le \,2\,+\, 
		\int_0^t
		\int_{\TT}  F_{\delta,\epsilon}^2(\uderh)\,dx\,ds
		\,+\, 
		\int_0^t
		\int_{\TT}   (F_{\delta,\epsilon}''(\uderh))^2 (\partial_x \uderh)^4\,dx\,ds.
	\end{align}
	\textbf{$(\partial_x u)^0$-Term. } Lastly, we also have that
	\begin{align}&
		\biggl|\sum_{k\in \ZZ}\int_0^t \gamma_{\uderh}^2 \int_{\TT} (4\sigma_k\partial_x^4 \sigma_k -\partial_x^4(\sigma_k^2)) F_{\delta,\epsilon}^2(\uderh)\,dx\, ds\biggr|\,\overset{\eqref{Eq121}, \eqref{Eq117}}{\lesssim_\sigma}\, \int_0^t \int_{\TT} F_{\delta,\epsilon}^2(\uderh)\,dx\, ds.
	\end{align}
	Inserting all this in \eqref{Eq25}, we arrive at
	\begin{align}&
		\tfrac{1}{2}\|\partial_x \uderh(t)\|_{L^2(\TT)}^2\,+\, \int_0^t \int_{\TT} F_{\delta,\epsilon}^2(\uderh)(\partial_x^3 \uderh)^2\,dx\, ds \\&\quad
		\le \,\tfrac{1}{2}\|\partial_x \uderh(0)\|_{L^2(\TT)}^2\,+\, |M(t)|\\&\qquad
		+\, C_{n,\nu,\sigma } \biggl[ 1 \,+\, \int_0^t
		\int_{\TT}  F_{\delta,\epsilon}^2(\uderh)\,dx\,ds
		\,+\, 
		\int_0^t
		\int_{\TT}   (F_{\delta,\epsilon}''(\uderh))^2 (\partial_x \uderh)^4\,dx\,ds
		\biggr],
	\end{align}
	with 
	\begin{equation}\label{Eq59}
		M(t)\,=\, \sum_{k\in \ZZ}\int_0^t \gamma_{\uderh}
		\int_{\TT} 
		\sigma_k F_{\delta,\epsilon}(\uderh)\partial_x^3 \uderh
		\,dx
		\, d{\cO\hat{\beta}^{(k)}}.
	\end{equation} 
We apply the Gagliardo Nirenberg inequality with the choice of exponents
	\[
	\tfrac{-1}{n}\,=\, \theta (\tfrac{-1}{1})\,+\, (1-\theta)(1-\tfrac{1}{2})\qquad \iff \qquad \theta\,=\, \tfrac{n+2}{3n}\]
	and Young's inequality with $\frac{4-n}{3}+\frac{n-1}{3}=1$
	to estimate further 
	\begin{align}&
		\int_{\TT}  F_{\delta,\epsilon}^2(\uderh)\,dx\, \overset{\eqref{Eq113}}{\lesssim_n}\, 1\,+\, \|\uderh\|_{L^n(\TT)}^n
		\\&\quad \lesssim_n\,
		1\,+\, \|\uderh\|_{L^1(\TT)}^n\,+\, \|\uderh\|_{L^1(\TT)}^{\theta n} \|\partial_x \uderh\|_{L^2(\TT)}^{(1-\theta)n}
		\\& \quad\le \, 1\,+\, \|\uderh\|_{L^1(\TT)}^n\,+\,  \bigl(\tfrac{1}{\kappa}\bigr)^\frac{n-1}{4-n}\|\uderh\|_{L^1(\TT)}^\frac{3\theta n}{4-n} \,+\, \kappa\|\partial_x \uderh\|_{L^2(\TT)}^\frac{3(1-\theta)n}{n-1}
		\\&\quad=\, 1\,+\, \|\uderh\|_{L^1(\TT)}^n\,+\,  \bigl(\tfrac{1}{\kappa}\bigr)^\frac{n-1}{4-n}\|\uderh\|_{L^1(\TT)}^\frac{n+2}{4-n} \,+\, \kappa\|\partial_x \uderh\|_{L^2(\TT)}^2
		\\&\quad\le \, 2\,+\,  \bigl(\bigl(\tfrac{1}{\kappa}\bigr)^\frac{n-1}{4-n}+1\bigr)\|\uderh\|_{L^1(\TT)}^\frac{n+2}{4-n} \,+\, \kappa\|\partial_x \uderh\|_{L^2(\TT)}^2.
	\end{align}
	In the last step we used that $n> 2$. Choosing $\kappa$ sufficiently small, we obtain that
	\begin{align}&
		\tfrac{1}{2}\|\partial_x \uderh(t)\|_{L^2(\TT)}^2\,+\, \int_0^t \int_{\TT} F_{\delta,\epsilon}^2(\uderh)(\partial_x^3 \uderh)^2\,dx\, ds \\&\quad
		\le \,\tfrac{1}{2}\|\partial_x \uderh(0)\|_{L^2(\TT)}^2\,+\, |M(t)|\,+\,
		\tfrac{1}{4}\sup_{s\in [0,t]}\|\partial_x \uderh(s)\|_{L^2(\TT)}^2
		\\&\qquad
		+\, C_{n,\nu,\sigma,T } \biggl[ 1 \,+\,  \sup_{t\in [0,T]}\|\uderh(t)\|_{ L^1(\TT)}^\frac{n+2}{4-n}
		\,+\, 
		\int_0^T
		\int_{\TT}   (F_{\delta,\epsilon}''(\uderh))^2 (\partial_x \uderh)^4\,dx\,ds
		\biggr].
	\end{align}
	We take the supremum in time
and take the $\frac{p}{2}$-th power on both sides
	to obtain 
	\begin{align}&
	\tfrac{1}{4}\sup_{t\in[0,T]}\|\partial_x \uderh(t)\|_{L^2(\TT)}^p\,+\,\biggl( \int_0^{T} \int_{\TT} F_{\delta,\epsilon}^2(\uderh)(\partial_x^3 \uderh)^2\,dx\, dt\biggr)^\frac{p}{2} \\&\quad
\lesssim_p \,\|\partial_x \uderh(0)\|_{L^2(\TT)}^p\,+\, \sup_{t\in[0,T]}|M(t)|^\frac{p}{2}
\\&\qquad
+\, C_{n,\nu,p,\sigma,T } \biggl[ 1 \,+\,  \sup_{t\in [0,T]}\|\uderh(t)\|_{ L^1(\TT)}^\frac{p(n+2)}{8-2n}
\,+\, 
\int_0^{T}
\biggl(\int_{\TT}   (F_{\delta,\epsilon}''(\uderh))^2 (\partial_x \uderh)^4\,dx\,dt\biggr)^\frac{p}{2}
\biggr].
	\end{align}
Taking the expectation and employing the {\cO Burkholder--Davis--Gundy} inequality leads to
	\begin{align}\begin{split}\label{Eq60}
			&
			\hat{\EE}	\biggl[
			\sup_{t\in [0,T]} \|\partial_x \uderh(t)\|_{L^2(\TT)}^p\,+\, \biggl(\int_0^{T} \int_{\TT} F_{\delta,\epsilon}^2(\uderh)(\partial_x^3 \uderh)^2\,dx\, dt\biggr)^\frac{p}{2}
			\biggr]
			\\&\quad \lesssim_{n,\nu, \sigma, p,T}
			\, \hat{\EE}\Bigl[
			\|\partial_x \uderh(0)\|_{L^2(\TT)}^p
		\,+\, 
			\langle
			M
			\rangle_{T}^\frac{p}{4}
			\Bigr]\,+\, 1
			\\&\qquad
			+\, \hat{\EE}\biggl[
			\sup_{t\in [0,T]}\|\uderh(t)\|_{ L^1(\TT)}^\frac{p(n+2)}{8-2n}
			 \,+\,
			\biggl(\int_0^T
			\int_{\TT}   (F_{\delta,\epsilon}''(\uderh))^2 (\partial_x \uderh)^4\,dx\,dt\biggr)^\frac{p}{2} 
			\biggr].
		\end{split}
	\end{align}
Since 
\begin{align}\label{Eq166}&
	\langle M \rangle_t\,=\, \sum_{k\in \ZZ} \int_0^t \biggl(
	\gamma_{\uderh} \int_{\TT} \sigma_k F_{\delta,\epsilon}(\uderh) \partial_x^3 \uderh \,dx
	\biggr)^2 \, ds\\&\quad \overset{\eqref{Eq121}}{\lesssim_\sigma} \, \int_0^t  \int_{\TT} (F_{\delta,\epsilon}(\uderh))^2 (\partial_x^3 \uderh)^2  \,dx \, ds,
\end{align}
we can estimate
\begin{align} \begin{split}&
		\label{Eq61}
	\hat{\EE} \bigl[
	\langle M \rangle_{T}^\frac{p}{4}
	\bigr] \,\lesssim_{\sigma,p} \, \hat{\EE}\biggl[\biggl(
	\int_0^{T} \int_{\TT} (F_{\delta,\epsilon}(\uderh))^2 (\partial_x^3 \uderh)^2  \,dx \, dt\biggr)^\frac{p}{4}
	\biggr] \\&\quad \le\, \kappa \hat{\EE}\biggl[\biggl(
	\int_0^{T} \int_{\TT} (F_{\delta,\epsilon}(\uderh))^2 (\partial_x^3 \uderh)^2  \,dx \, dt\biggr)^\frac{p}{2}
	\biggr]\,+\, \frac{1}{\kappa}.
	\end{split}
\end{align}
Inserting this as well as {\cO the approximate $\log$-entropy estimate} \eqref{Eq111} in \eqref{Eq60}, and choosing $\kappa$ sufficiently small, we conclude
	\begin{align}\begin{split}
			\label{Eq118}
		&
		\hat{\EE}	\biggl[
		\sup_{t\in [0,T]} \|\partial_x \uderh(t)\|_{L^2(\TT)}^p\biggr]\,+\, \hat{\EE}	\biggl[\biggl(\int_0^{T} \int_{\TT} F_{\delta,\epsilon}^2(\uderh)(\partial_x^3 \uderh)^2\,dx\, dt\biggr)^\frac{p}{2}
		\biggr]
		\\&\quad \lesssim_{n,\nu, \sigma, p,T}
		\, \EE\biggl[
		\|\partial_x u_0\|_{L^2(\TT)}^p
		\,+\, \biggl(
		\int_{\TT} (u_0-1) -\log(u_0)\, dx\biggr)^\frac{p}{2}\biggr]
		\\&\qquad+\, \hat{\EE}\biggl[
		\sup_{t\in [0,T]}\|\uderh(t)\|_{ L^1(\TT)}^\frac{p(n+2)}{8-2n}
		\biggr]\,+\, 1 
		\\&\qquad+\,C_{\delta,n,\nu,p,T} \hat{\EE}\Bigl[
		\|\uderh^-\|_{C([0,T]\times \TT)}^p\Bigr]^\frac{1}{2}{\color{OliveGreen}\biggl(\EE\biggl[
		\|G_{\delta,\epsilon}(u_0)\|_{L^1(\TT)}^p
		\,+\, \biggl|\int_{\TT} u_0\, dx \biggr|^{2p}
		\biggr]\,+\, 1\biggr)^\frac{1}{2}}
		\\&\qquad+\, C_{\delta,n,\nu,p}\hat{\EE}\Bigl[
		\|\uderh^-\|_{C([0,T]\times \TT)}^{(4-n)\frac{p}{2}}\,+\,
		\|\uderh^-\|_{C([0,T]\times \TT)}^{(4-\nu)\frac{p}{2}} \Bigr],
		\end{split}
\end{align}
where we also used 
\[\frac{p(n+2)}{8-2n}\,>\, \frac{p(n-2)}{2}.\]
When the limit $R\to \infty$ is taken in the proof of Lemma \ref{Lemma_Ex_ud}, which follows along the lines of \cite[Proposition 4.7]{dareiotis2021nonnegative}, the estimate \eqref{Eq118} is preserved. By taking the limit $\epsilon\searrow 0$, we derive \eqref{Eq115} as follows. We denote the sequence from \cite[Eq. (5.14a)]{dareiotis2021nonnegative} which converges $\check{\PP}$-almost surely to $\udc$ in $C([0,T]\times \TT)$ by $\check{u}_{\delta,\epsilon}$, such that in particular
\begin{align}&
	\sup_{t\in [0,T]}\|\check{u}_{\delta,\epsilon}(t)\|_{ L^1(\TT)}\,\to \, 
		\sup_{t\in [0,T]}\|\udc(t)\|_{ L^1(\TT)},
	\end{align}
$\check{\PP}$-almost surely.
{\color{OliveGreen} Since $r \mapsto r^-$ is Lipschitz continuous and $\check{u}_\delta$ is non-negative, we deduce  additionally
\begin{align}
			\|\check{u}_{\delta,\epsilon}^-\|_{C([0,T]\times \TT)} \,=\, 
			\|\check{u}_{\delta,\epsilon}^- - \check{u}_{\delta}^-\|_{C([0,T]\times \TT)} \,\le\, 
			 \|\check{u}_{\delta,\epsilon}- \check{u}_{\delta}\|_{C([0,T]\times \TT)}
			\,\to \, 0,
\end{align} 
$\check{\PP}$-almost surely.} By \cite[Eq. (4.24)]{dareiotis2021nonnegative} the sequence $\check{u}_{\delta,\epsilon}$ is uniformly $\epsilon$ bounded in $L^q(\check{\Omega}, C([0,T]\times \TT))$ for all $q\in (1,\infty)$, and we obtain by Vitali's convergence theorem that
\[
\check{\EE}\Bigl[
\|\check{u}_{\delta,\epsilon}^-\|_{C([0,T]\times \TT)}^p\Bigr]^\frac{1}{2}
\,+\,\check{\EE}\Bigl[
\|\check{u}_{\delta,\epsilon}^-\|_{C([0,T]\times \TT)}^{(4-n)\frac{p}{2}}\,+\,
\|\check{u}_{\delta,\epsilon}^-\|_{C([0,T]\times \TT)}^{(4-\nu)\frac{p}{2}} \Bigr]\,\to\, 0
\]
as $\epsilon\searrow 0$. Since $\check{u}_\delta$ is non-negative and preserves mass, \eqref{Eq115} follows.
\end{proof}

\begin{lemma}[{\cO $\delta$-Uniform $\log$-Entropy Estimate}]\label{Lemma_log_entropy_est}
Let $p> n+2$, $u_0 \in L^\infty(\Omega, \mathfrak{F}_0{\cO;} H^1(\TT))$ such that $u_0\ge \delta$. Then 
any weak martingale solution
\[
\bigl\{
(\check{\Omega}, \check{\mathfrak{A}},\check{\mathfrak{F}},\check{\PP} ), \,(\check{\beta}^{(k)})_{k\in \ZZ},\, \udc
\bigr\}
\]
to {\cO\eqref{Eq107b} in the sense of Definition \ref{defi_sol_approx}} with initial value $u_0$ constructed in Lemma \ref{Lemma_Ex_ud} satisfies
	\begin{align}\begin{split}
		&
		\check{\EE}	\biggl[
		\sup_{t\in [0,T]} 
		\biggl(\int_{\TT}L_\delta(\check{u}_\delta(t)) \, dx\biggr)^\frac{p}{2}
		\biggr]\, \lesssim_{n,\nu, \sigma, p,T}
		\, \EE\biggl[{\color{OliveGreen}\biggl|\int_{\TT} u_0\, dx\biggr|^{(n-2)\frac{p}{2}}}
		\,+\, \biggl(
		\int_{\TT} (u_0-1)-\log(u_0)\, dx\biggr)^\frac{p}{2}\biggr].
	\end{split}
\end{align}
\end{lemma}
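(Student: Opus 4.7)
The proof follows the template of Lemma \ref{Lemma_EE} but is considerably simpler, since the approximate $\log$-entropy estimate \eqref{Eq111} from Lemma \ref{Lemma_log_Entr_Approx_level} directly produces the quantity we wish to control, bypassing the need for a separate It\^o expansion. The plan is to apply \eqref{Eq111} to the martingale solutions $\uderh$ to \eqref{Eq108} constructed in Lemma \ref{Lemma_Ex_uder}, dropping the two non-negative terms on the left-hand side involving $J_{\delta,\epsilon}^+(\uderh)(\partial_x^2 \uderh)^2$ and $(F_{\delta,\epsilon}''(\uderh))^2(\partial_x \uderh)^4$ so as to isolate the quantity $\hat{\EE}[\sup_{t\in [0,T]}(\int_{\TT} L_{\delta,\epsilon}^+(\uderh(t))\, dx)^{p/2}]$ on the left-hand side. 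One then passes to the limit $R\to\infty$ as in the proof of \cite[Proposition 4.7]{dareiotis2021nonnegative} invoked in Lemma \ref{Lemma_Ex_ud}, which preserves the estimate.

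To take $\epsilon\searrow 0$, we follow the end of the proof of Lemma \ref{Lemma_EE}. Combining the $\check{\PP}$-almost sure convergence $\check{u}_{\delta,\epsilon}\to \udc$ in $C([0,T]\times \TT)$ with the uniform bound \cite[Eq. (4.24)]{dareiotis2021nonnegative} and Vitali's convergence theorem, the non-negativity of $\udc$ yields $\check{\EE}[\|\check{u}_{\delta,\epsilon}^-\|_{C([0,T]\times \TT)}^\alpha]\to 0$ for every positive exponent $\alpha$. Consequently, all terms on the right-hand side of \eqref{Eq111} carrying a $\delta$-dependent prefactor multiplied by a positive power of $\|\uderh^-\|_{C([0,T]\times \TT)}$ vanish in the limit; here the assumption $u_0\ge \delta$ guarantees that $\EE[\|G_{\delta,\epsilon}(u_0)\|_{L^1(\TT)}^p]$ stays bounded uniformly in $\epsilon$, rendering the associated $\delta$-dependent prefactor harmless. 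The mass term $\hat{\EE}[\sup_t \|\uderh^+(t)\|_{L^1(\TT)}^{(n-2)p/2}]$ converts via conservation of mass for \eqref{Eq107b} into $\EE[|\int_{\TT} u_0\, dx|^{(n-2)p/2}]$, while the initial-entropy term persists unchanged.

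For the left-hand side, we observe that $L_\delta''= J_\delta/F_\delta^2\ge 0$ with $L_\delta(1)=L_\delta'(1)=0$, so $L_\delta$ is convex and non-negative on $(0,\infty)$, while for each $r>0$ one has $L_{\delta,\epsilon}(r)\to L_\delta(r)$ as $\epsilon\searrow 0$. Combined with the $\check{\PP}$-almost sure pointwise convergence $\check{u}_{\delta,\epsilon}\to \udc$ and the almost everywhere positivity $\udc>0$ from Lemma \ref{Lemma_Ex_ud}, Fatou's lemma applied successively in the spatial integral, the supremum over time, and the expectation yields
\[
\check{\EE}\bigl[\textstyle\sup_{t\in [0,T]}(\int_{\TT} L_\delta(\udc(t))\, dx)^{p/2}\bigr]\,\le\,\liminf_{\epsilon\searrow 0}\check{\EE}\bigl[\textstyle\sup_{t\in [0,T]}(\int_{\TT} L_{\delta,\epsilon}^+(\check{u}_{\delta,\epsilon}(t))\, dx)^{p/2}\bigr],
\]
whence inserting the right-hand side estimate derived above completes the argument.

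The main technical obstacle is precisely this passage of the left-hand side through the $\epsilon$-limit. Since $L_\delta(r)\to\infty$ as $r\searrow 0$, while $L_{\delta,\epsilon}$ remains finite at $r=0$ (because $K_\epsilon(0)=\epsilon>0$), the pointwise convergence $L_{\delta,\epsilon}^+(\check{u}_{\delta,\epsilon})\to L_\delta(\udc)$ holds only on $\{\udc>0\}$, and the non-negativity and lower semi-continuity of $L_\delta$ must be exploited carefully to ensure that no mass of the entropy integral is lost in the limit on the null set $\{\udc=0\}$.
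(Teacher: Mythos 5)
Your overall strategy matches the paper's exactly: take $R\to\infty$ in the approximate $\log$-entropy estimate \eqref{Eq111}, pass $\epsilon\searrow 0$ using the almost sure convergence $\check{u}_{\delta,\epsilon}\to\udc$ in $C([0,T]\times\TT)$, the uniform moment bound from \cite[Eq.~(4.24)]{dareiotis2021nonnegative} and Vitali's theorem to make all terms carrying a power of $\|\check{u}_{\delta,\epsilon}^-\|_{C([0,T]\times\TT)}$ vanish, identify the mass term via conservation of mass, and finish with Fatou on the left-hand side.

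The one step you flag as "the main technical obstacle" but do not actually close is the pointwise inequality $\liminf_{\epsilon\searrow 0}L_{\delta,\epsilon}^+(\check{u}_{\delta,\epsilon})\ge L_\delta(\udc)$ needed for the spatial Fatou argument. This does \emph{not} follow merely from the separate pointwise convergences $L_{\delta,\epsilon}(r)\to L_\delta(r)$ for fixed $r>0$ and $\check{u}_{\delta,\epsilon}\to\udc$: composing a converging sequence of functions with a converging sequence of arguments requires some form of uniformity. The paper establishes the inequality by expanding
\[
L_{\delta,\epsilon}^+(\check{u}_{\delta,\epsilon})\,=\,\int_0^\infty\mathbbm{1}_{\mathcal{S}_{\check{u}_{\delta,\epsilon}}}(r')\int_0^\infty\mathbbm{1}_{\mathcal{S}_{r'}}(r'')\,\frac{J_{\delta,\epsilon}(r'')}{F_{\delta,\epsilon}^2(r'')}\,dr''\,dr'
\]
and applying Fatou's lemma twice to pull the $\liminf$ first inside the outer $r'$-integral, then inside the inner $r''$-integral, using $\liminf_\epsilon\mathbbm{1}_{\mathcal{S}_{\check{u}_{\delta,\epsilon}}}\ge\mathbbm{1}_{\mathcal{S}_{\udc}}$ (almost everywhere) and $\liminf_\epsilon J_{\delta,\epsilon}/F_{\delta,\epsilon}^2\ge J_\delta/F_\delta^2$ on $(0,\infty)$. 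Your appeal to lower semicontinuity and non-negativity of $L_\delta$ is in the right spirit — one could alternatively prove locally uniform convergence $L_{\delta,\epsilon}\to L_\delta$ on compact subsets of $(0,\infty)$ (which does hold, by dominated convergence with a uniform dominating bound via \eqref{Eq32} and $F_{\delta,\epsilon}\ge F_\delta$), but you would need to actually supply that argument rather than just name the obstacle.
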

\begin{proof}
	As in the proof of Lemma \ref{Lemma_EE} we first take $R\to\infty$ in \eqref{Eq111} and obtain that
	\begin{align}
		&
		\check{\EE}\biggl[
		\sup_{t\in [0,T]}	\biggr(
		\int_{\TT} L_{\delta,\epsilon}^+(\check{u}_{\delta,\epsilon}(t))\, dx\biggl)^\frac{p}{2}\biggr]
		\\&\quad  \lesssim_{n,\nu,p,\sigma, T} C_{\delta,n,\nu,p,T} \check{\EE}\Bigl[
		\|\check{u}_{\delta,\epsilon}^-\|_{C([0,T]\times \TT)}^p\Bigr]^\frac{1}{2}{\color{OliveGreen}\biggl(\EE\biggl[
		\|G_{\delta,\epsilon}(u_0)\|_{L^1(\TT)}^p
		\,+\,  \biggl|\int u_0\, dx\biggr|^{2p}
		\biggr]\,+\, 1\biggr)^\frac{1}{2}}
		\\&\qquad+\, \check{\EE}\biggl[
		\sup_{t\in [0,T]}\|\check{u}_{\delta,\epsilon}^+(t)\|_{L^1(\TT)}^{(n-2)\frac{p}{2}} 
		\biggr]\,+\, C_{\delta,n,\nu,p}\check{\EE}\Bigl[
		\|\check{u}_{\delta,\epsilon}^-\|_{C([0,T]\times \TT)}^{(4-n)\frac{p}{2}}\,+\,
		\|\check{u}_{\delta,\epsilon}^-\|_{C([0,T]\times \TT)}^{(4-\nu)\frac{p}{2}} \Bigr]
		\\&\qquad+\, \EE\biggl[\biggl(
		\int_{\TT} (u_0-1) -\log(u_0)\, dx\biggr)^\frac{p}{2}\biggr]
	\end{align}
	holds for the sequence $\check{u}_{\delta,\epsilon}$ converging $\check{\PP}$-almost surely to $\check{u}_\delta$ from \cite[Eq. (5.14)]{dareiotis2021nonnegative}. Continuing as in the proof of Lemma \ref{Lemma_EE}, we obtain
	\begin{align}
		&
		\check{\EE}\biggl[
		\sup_{t\in [0,T]}	\biggr(
		\int_{\TT}\liminf_{\epsilon\searrow 0} L_{\delta,\epsilon}^+(\check{u}_{\delta,\epsilon}(t))\, dx\biggl)^\frac{p}{2}\biggr]
		\\&\quad  \lesssim_{n,\nu,p,\sigma, T}  \EE\biggl[{\color{OliveGreen}\biggl|\int u_0\, dx\biggr|^{(n-2)\frac{p}{2}}}
		\,+\,\biggl(
		\int_{\TT} (u_0-1) -\log(u_0)\, dx\biggr)^\frac{p}{2}\biggr]
	\end{align}
	by letting $\epsilon\searrow 0$ and additionally employing Fatou's lemma. 
	It is left to argue that
	\begin{equation}\label{Eq129}
	\liminf_{\epsilon\searrow 0} L_{\delta,\epsilon}^+(\check{u}_{\delta,\epsilon})\, \ge \, L_\delta(\check{u}_\delta).
	\end{equation}
	To this end, we use that $\check{u}_{\delta,\epsilon}$ becomes eventually positive $\check{\PP}\otimes dt \otimes dx$-almost everywhere by Lemma \ref{Lemma_Ex_ud} and the notation
	\begin{equation}
		\mathcal{S}_r\,=\, 
		\begin{cases}
			(1,r),& r> 1,
			\\
			\emptyset, & r=1,
			\\
			(r,1),
			&r<1
		\end{cases}
	\end{equation} 
 to deduce
	\begin{align}&
			\liminf_{\epsilon\searrow 0} L_{\delta,\epsilon}^+(\check{u}_{\delta,\epsilon}(t))\,=\, 
				\liminf_{\epsilon\searrow 0} \int_0^\infty \mathbbm{1}_{\mathcal{S}_{\check{u}_{\delta,\epsilon}}} (r')  \int_{0}^\infty \mathbbm{1}_{\mathcal{S}_{r'}}(r'') 
					\frac{J_{\delta,\epsilon}(r'')}{F_{\delta,\epsilon}^2(r'')}\, dr''\, dr'
					\\&\quad
					\ge \,
					\int_0^\infty  \Bigl(	\liminf_{\epsilon\searrow 0} \mathbbm{1}_{\mathcal{S}_{\check{u}_{\delta,\epsilon}}} (r')\Bigr) \times \biggl(	\liminf_{\epsilon\searrow 0}\int_{0}^\infty \mathbbm{1}_{\mathcal{S}_{r'}}(r'') 
					\frac{J_{\delta,\epsilon}(r'')}{F_{\delta,\epsilon}^2(r'')}\, dr''\biggr)\, dr'
					\\&\quad
					\ge \, 
					\int_0^\infty   \mathbbm{1}_{\mathcal{S}_{\check{u}_{\delta}}} (r') 	\int_{0}^\infty  \mathbbm{1}_{\mathcal{S}_{r'}}(r'')\times \biggl( \liminf_{\epsilon\searrow 0}
					\frac{J_{\delta,\epsilon}(r'')}{F_{\delta,\epsilon}^2(r'')}\biggr)\, dr''\, dr'
					\\&\quad
					\overset{{\cO\eqref{Eq39}\text{--} \eqref{Eq45}}}{\ge}\, 
					\int_0^\infty   \mathbbm{1}_{\mathcal{S}_{\check{u}_{\delta}}} (r') 	\int_{0}^\infty  \mathbbm{1}_{\mathcal{S}_{r'}}(r'')
					\frac{J_{\delta}(r'')}{F_{\delta}^2(r'')}\, dr''\, dr'\,\overset{\eqref{Eq_Ld}}{=}\,
					L_\delta(\check{u}_{\delta}).
	\end{align}
	Here, we repeatedly applied  Fatou's lemma together with the the properties of the limes inferior. This shows \eqref{Eq129} and finishes the proof.
\end{proof}

{\color{OliveGreen}\subsection{The limit \texorpdfstring{$\delta\searrow 0$}{delta to zero}}\label{SS_delta_lim}
In this subsection we prove our main results Theorem \ref{Thm_Existence} and Proposition \ref{Prop_Entr_Est}, i.e., existence for the original equation \eqref{Eq102} and corresponding estimates on the solution. Since we have the}  $\delta$-uniform energy estimate {\cO from Lemma \ref{Lemma_EE}} at hand, the proof of Theorem \ref{Thm_Existence} follows along the lines of the proof of \cite[Theorem 2.2]{dareiotis2021nonnegative}. 
\begin{proof}[Proof of Theorem \ref{Thm_Existence}]
	 Let $p>n+2$ and $u_0\in L^p(\Omega, \mathfrak{F}_0,H^1(\TT))$ in accordance with \eqref{Eq124}. Moreover, for the given $n$, we fix one feasible $\nu$, {\cO so} that we can drop the $\nu$-dependence in the following estimates. We define 
	\begin{equation}\label{Eq132}
		u_{0,\delta} \,=\, \mathbbm{1}_{\bigl\{\|u_0\|_{H^1(\TT)} <e^\frac{1}{\delta}\bigr\} }u_0 \,+\,  \delta.
	\end{equation}
	In particular, Lemma \ref{Lemma_Ex_ud} is applicable and yields the existence of a weak martingale solutions $(\udc)_{\delta>0}$ 
	to {\cO\eqref{Eq107b}} in the sense of Definition \ref{defi_sol_approx} with initial value $u_{0,\delta}$. {\cO For ease} of notation, we assume {\cO these solutions} to be defined with respect to the same stochastic basis	\begin{equation}
		\bigl\{
		(\check{\Omega}, \check{\mathfrak{A}},\check{\mathfrak{F}},\check{\PP} ), \,(\check{\beta}^{(k)})_{k\in \ZZ} 
		\bigr\}.
	\end{equation} Also Lemma \ref{Lemma_EE} and Lemma \ref{Lemma_log_entropy_est} are applicable by \eqref{Eq132} and yield that the solutions satisfy the uniform estimate
	\begin{align}\begin{split}\label{Eq141}
		&
		\check{\EE}	\biggl[
		\sup_{t\in [0,T]} \|\partial_x \check{u}_\delta(t)\|_{L^2(\TT)}^p\,+\, 
		\sup_{t\in [0,T]} 
		\biggl(\int_{\TT}L_\delta(\check{u}_\delta(t)) \, dx\biggr)^\frac{p}{2}\,+\,\biggl(\int_0^{T} \int_{\TT} F_{\delta}^2(\check{u}_\delta)(\partial_x^3 \check{u}_\delta)^2\,dx\, ds\biggr)^\frac{p}{2}
		\biggr]
		\\&\quad \lesssim_{n, p, \sigma,T}
		\, {\EE}\biggl[
		\|\partial_x u_{0,\delta}\|_{L^2(\TT)}^p\,
		+\,{\color{OliveGreen}\biggl|\int_{\TT} u_{0,\delta} \, dx\biggr|^\frac{p(n+2)}{8-2n}}
		\,+\,\biggl(
		\int_{\TT} (u_{0,\delta}-1)-\log(u_{0,\delta})\, dx\biggr)^\frac{p}{2}\biggr]
		\,+\, 1.
	\end{split}
\end{align}
Proceeding as in \cite[Lemma 5.1]{dareiotis2021nonnegative}, we obtain the estimate
\begin{align}&
	\check{\EE}\Bigl[
	\|\check{u}_\delta\|_{ C^\frac{1}{4}([0,T]; L^2(\TT))}^q
	\Bigr]\,
	\lesssim_{n,p,q,\sigma, T}\,
	\check{\EE}\biggl[\sup_{t\in[0,T]} \| \partial_x \udc\|_{L^2(\TT)}^p  \,+\,{\color{OliveGreen}\biggl|\int_{\TT} \udc(0)\, dx\biggr|^p} \biggr]^\frac{(n+2)q}{2p}
	\\&\qquad
	+\, \check{\EE}\biggl[
	\biggl(
	\int_0^T \int_{\TT}
	F_\delta^2(\check{u}_\delta)(\partial_x^3 \udc)^2
	\,dx\,dt
	\biggr)^\frac{p}{n+2}
	\biggr]^\frac{(n+2)q}{2p}\,+\, 1
\end{align}
for any $q\in [1,\frac{2p}{n+2})$ from the last equation of the proof of \cite[Lemma 5.1]{dareiotis2021nonnegative}. Indeed, to achieve this one has to estimate the nonlinearities from {\cO \eqref{Eq107b}}, which satisfy the same bounds
\begin{equation}
	F_\delta(r)\,\le \, r^\frac{n}{2}, \quad r>0{\cO,}
\end{equation} 
and \eqref{Eq110} as in the case of a homogeneous mobility function.
 Inserting \eqref{Eq141}, we conclude
\begin{align}\begin{split}\label{Eq167}
		&
	\check{\EE}\Bigl[
	\|\check{u}_\delta\|_{ C^\frac{1}{4}([0,T]; L^2(\TT))}^q
	\Bigr]\,
	 \lesssim_{n,  p,q,\sigma,T}
	\, {\EE}{\color{OliveGreen}\biggl[
	\|\partial_x u_{0,\delta}\|_{L^2(\TT)}^p
\,
	+\, \biggl|\int_{\TT} u_{0,\delta}\, dx\biggr|^\frac{p(n+2)}{8-2n}
	\biggr]^\frac{(n+2)q}{2p} }
	\\&\qquad+\, \EE\biggl[\biggl(
	\int_{\TT} (u_{0,\delta}-1)-\log(u_{0,\delta})\, dx\biggr)^\frac{p}{2}\biggr]^\frac{(n+2)q}{2p}
	\,+\, 1.
	\end{split}
\end{align}
Next, we verify that
\begin{equation}\label{Eq165}
	\limsup_{\delta\searrow 0} \EE\biggl[\biggl(
	\int_{\TT} (u_{0,\delta}-1)-\log(u_{0,\delta})\, dx\biggr)^\frac{p}{2}\biggr]\,\le \, 
	\EE\biggl[\biggl(
	\int_{\TT} (u_{0}-1)-\log(u_{0})\, dx\biggr)^\frac{p}{2}\biggr]
\end{equation}
to ensure that \eqref{Eq141} and \eqref{Eq167} have a uniformly bounded right-hand side in $\delta$.
To this end, we calculate
\begin{align}\begin{split}
		\label{Eq164}&
		\limsup_{\delta\searrow 0} \EE\biggl[\biggl(
		\int_{\TT} (u_{0,\delta}-1)-\log(u_{0,\delta})\, dx\biggr)^\frac{p}{2}\biggr]
		\\&\quad  \overset{\eqref{Eq132}}{\le }\,
		\limsup_{\delta\searrow 0} \EE\biggl[  \mathbbm{1}_{\bigl\{\|u_0\|_{H^1(\TT)} <e^\frac{1}{\delta}\bigr\} }\biggl(
		\int_{\TT} (u_{0}+\delta-1)-\log(u_0+\delta)\, dx\biggr)^\frac{p}{2}\biggr]
		\\&\qquad
		+\,  
		\limsup_{\delta\searrow 0} \EE\biggl[  \mathbbm{1}_{\bigl\{\|u_0\|_{H^1(\TT)} \ge e^\frac{1}{\delta}\bigr\} }\biggl(
		\int_{\TT} (\delta-1)-\log(\delta)\, dx\biggr)^\frac{p}{2}\biggr]
		\\&\quad\le \, 
		\limsup_{\delta\searrow 0} \EE\biggl[ \biggl(
		\int_{\TT} (u_{0}+\delta-1)-\log(u_0)\, dx\biggr)^\frac{p}{2}\biggr]
		\\&\qquad+\, 
		\limsup_{\delta\searrow 0} \EE\biggl[  \mathbbm{1}_{\bigl\{\|u_0\|_{H^1(\TT)} \ge e^\frac{1}{\delta}\bigr\} }\biggl(
		\int_{\TT} \log\bigl(\tfrac{1}{\delta}\bigr)\, dx\biggr)^\frac{p}{2}\biggr]
		\\&\quad\le\,  \EE\biggl[ \biggl(
		\int_{\TT} (u_{0}-1)-\log(u_0)\, dx\biggr)^\frac{p}{2}\biggr]\,+\, 
		\limsup_{\delta\searrow 0} \EE\biggl[  \mathbbm{1}_{\bigl\{\|u_0\|_{H^1(\TT)} \ge e^\frac{1}{\delta}\bigr\} }\bigl(
		\log\bigl(\log\bigl(
		\|u_0\|_{H^1(\TT)}
		\bigr)\bigr)\bigr)^\frac{p}{2}\biggr]
	\\&\quad
	=\,  \EE\biggl[ \biggl(
	\int_{\TT} (u_{0}-1)-\log(u_0)\, dx\biggr)^\frac{p}{2}\biggr],
\end{split}
\end{align}
where in the last step we used dominated convergence together with  $u_0\in L^p(\Omega, \mathfrak{F}_0{\cO;} H^1(\TT))$. This shows \eqref{Eq165} and following \cite[Corollary 5.2, Proposition 5.4, Lemma 5.5]{dareiotis2021nonnegative}, we obtain a new filtered probability space $(\tilde{\Omega}, \tilde{\mathfrak{A}}, \tilde{\mathfrak{F}}, \tilde{\PP})$ with a family of independent $\tilde{\mathfrak{F}}$-Brownian motions $(\tilde{\beta}_k)_{k\in \ZZ}$ and an equidistributed subsequence $\tilde{u}_\delta\sim \check{u}_\delta$ converging to an $\tilde{\mathfrak{F}}$-adapted process $\tilde{u}$ in $C^{\frac{1}{8}-, \frac{1}{2}-}([0,T]\times \TT)$, $\tilde{\PP}$-almost everywhere. As in \cite[Proposition 5.6]{GessGann2020}, we conclude that
\begin{equation}
	\mathbbm{1}_{\{\tilde{u}_\delta >0 \}}F_\delta(\tilde{u}_\delta) \partial_x^3 \tilde{u}_\delta \, \rightharpoonup\, 	\mathbbm{1}_{\{\tilde{u} >0 \}}F_0(\tilde{u}) \partial_x^3 \tilde{u}
\end{equation}
in $L^2([0,T]\times \TT)$, $\tilde{\PP}$-almost surely. 
Estimate \eqref{Eq137} follows  as in \cite[Proposition 5.6]{dareiotis2021nonnegative} from \eqref{Eq141} and \eqref{Eq165} and consequently also that $\tilde{u}>0$ $\tilde{\PP}\otimes dt\otimes dx$-almost everywhere. As in \cite[Proof of Theorem 2.2]{dareiotis2021nonnegative}, we show that  equation \eqref{Eq136} holds. Lastly, the temporal regularity statement \eqref{Eq144} can be deduced in the same way as \cite[Eq. (5.15)]{dareiotis2021nonnegative}. 
\end{proof}

\begin{proof}[Proof of Proposition \ref{Prop_Entr_Est}]We remark again, that, since we choose one particular $\nu$ in the proof of Theorem \ref{Thm_Existence}, we can drop the $\nu$-dependence in the following estimates.
	Let $\uderh$ be the sequence of solutions to \eqref{Eq108} used in Lemma \ref{Lemma_Ex_ud} to construct the  $\udc$ from the proof of Theorem \ref{Thm_Existence}. By Lemma \ref{Lemma_Entropy_Est}, $\uderh$ {\cO satisfies} the estimate 
	\begin{align}\begin{split}
			\label{Eq163}
		&
		\hat{\EE}\biggl[
		\sup_{t\in [0,T]} \|G_{\delta,\epsilon}(\uderh(t))\|_{L^1(\TT)}^q
		\,+\, 
		\|\partial_x^2 \uderh\|_{L^2([0,T]\times \TT)}^{2q}
		\biggr]\\&\quad \lesssim_{n,q,\sigma, T}\, {\EE}{\color{OliveGreen}\biggl[
		\|G_{\delta,\epsilon}(u_{0,\delta})\|_{L^1(\TT)}^q
		\,+\, \biggl|\int_{\TT} u_{0,\delta}\, dx\biggr|^{2q}
		\biggr]}\,+\, 1.
		\end{split}
	\end{align}
	We use \eqref{Eq46} to estimate
	\begin{align}&
		G_{\delta,\epsilon}(r)\,\lesssim\, \int_r^\infty\int_{r'}^\infty \frac{1}{(r'')^n}\,dr''\, dr' \,\eqsim_n\, r^{2-n},\quad r\ge \delta
	\end{align}
	and since $u_{0,\delta}\ge \delta$ by \eqref{Eq132}, we can use that in \eqref{Eq163} to obtain
	\begin{align}
			&
			\hat{\EE}\biggl[
			\sup_{t\in [0,T]} \|G_{\delta,\epsilon}(\uderh(t))\|_{L^1(\TT)}^q\,+\, 
			\|\partial_x^2 \uderh\|_{L^2([0,T]\times \TT)}^{2q}
			\biggr]\\&\quad \lesssim_{n,q,\sigma, T}\, {\EE}{\color{OliveGreen}\biggl[
			\|u_{0,\delta}^{2-n}\|_{L^1(\TT)}^q
			\,+\,  \biggl|\int_{\TT} u_{0,\delta}\, dx\biggr|^{2q}
			\biggr]}\,+\, 1.
	\end{align}
	The estimate \eqref{Eq162} follows by Fatou's lemma and
	\begin{align}\limsup_{\delta\searrow 0}
		\EE \Bigl[
		\|u_{0,\delta}^{2-n}\|_{L^1(\TT)}^q
		\Bigr] \, \le \, \EE \Bigl[
		\|u_{0}^{2-n}\|_{L^1(\TT)}^q
		\Bigr],
	\end{align}
	which can be derived analogously to \eqref{Eq164}.
\end{proof}
{\color{OliveGreen}\section{Proofs omitted in the previous section}\label{Sec_remaining_proofs}
In this section, we provide the proofs of Lemma \ref{Lemma_EEeps} and Lemma \ref{Lemma_Ex_ud}, i.e., the main steps to deduce existence of solutions to the auxiliary equation \eqref{Eq107}. They were omitted in the previous section, since they are mostly analogous to the proofs of \cite[Theorem 2.2 and Lemma 4.6]{dareiotis2021nonnegative}. We start with the proof of the $(R,\epsilon)$-uniform energy estimate.
}
\begin{proof}[Proof of Lemma \ref{Lemma_EEeps}]To ease notation, we drop the hat notation and write $u$ for $\uderh $ and $\gamma_u = g_R(\|u\|_{C(\TT)})$ during this proof. Analogously to the proof of \cite[Lemma 4.6]{dareiotis2021nonnegative}, we start from {\cO the It\^o expansion of the energy functional} \eqref{Eq25} and estimate the terms on the right-hand side. {\cO In particular, we make use of the technical estimates from Lemmas \ref{LE_1}--\ref{LE_3}.}
		
		\textbf{$(\partial_x u)^4$-Term. } Using Lemma \ref{LE_1} and the interpolation inequality
		\begin{equation}\label{Eq168}
			\|\partial_x u \|_{L^\infty(\TT)}\,\lesssim\, \|\partial_x u\|_{L^2(\TT)}^\frac{1}{2}\|\partial_x^2u\|_{L^2(\TT)}^\frac{1}{2}
		\end{equation} 
		from \cite[Eq. (4.12)]{dareiotis2021nonnegative}, we see that
		\begin{align}&
			\sum_{k\in \ZZ} \int_0^t \gamma_u^2\int_{\TT}
			\sigma_k^2(F_{\delta,\epsilon}''(u))^2(\partial_xu)^4 \,dx\, ds\\&\quad
			\overset{\eqref{Eq121}}{\lesssim_{\sigma}}\, \int_0^t \int_{\TT} (F_{\delta,\epsilon}''(u))^2(\partial_xu)^4\,dx\, ds \\&\quad
			=\, -3\int_0^t \int_{\TT} \biggl(
			\int_1^u(F_{\delta,\epsilon}''(r) )^2\, dr
			\biggr) (\partial_xu)^2 (\partial_x^2 u)\, dx\, ds 
			\\&\quad\overset{\eqref{Eq161}}{
				\lesssim_{\delta,n,\nu}}\,
			\int_0^t 
			\|\partial_x u\|_{L^\infty(\TT)}^2 \|\partial_x^2 u\|_{L^2(\TT)}
			\, ds \\&\quad
			\overset{\eqref{Eq168}}{\lesssim} \,
			\int_0^t 
			\|\partial_x u\|_{L^2(\TT)} \|\partial_x^2 u\|_{L^2(\TT)}^2
			\, ds \\&\quad
			\le \sup_{s\in [0,t]} \|\partial_x u(s)\|_{L^2(\TT)} \times \|\partial_x^2 u\|_{L^2([0,t]\times \TT)}^2 \\&\quad
			\le\, \kappa \sup_{s\in [0,t]} \|\partial_x u(s)\|_{L^2(\TT)}^2\,+\, \tfrac{1}{\kappa}\|\partial_x^2 u\|_{L^2([0,T]\times \TT)}^4
		\end{align}
		for each $\kappa>0$. {\color{OliveGreen} We recall that the latter term is estimated in the entropy estimate \eqref{Eq_Entr_E} from Lemma \ref{Lemma_Entropy_Est}, which allows us to choose $\kappa$ small later on.}

		\textbf{$(\partial_x u)^3$-Term. }
		Using Lemma \ref{LE_2} we retrieve moreover that
		\begin{align}&
			\biggl|\sum_{k\in \ZZ} \int_0^t \gamma_u^2 \int_{\TT} \partial_x (\sigma_k^2)\bigl(
			(F_{\delta,\epsilon}^2)'''(u)\,+\, 4((F_{\delta,\epsilon}')^2)'(u)
			\bigr) (\partial_xu)^3\,dx\, ds\biggr|
			\\&\quad
			\overset{\eqref{Eq121},\eqref{Eq19}, \eqref{Eq20}}{\lesssim_{\delta,n,\nu,\sigma}} \,\int_0^t \int_{\TT} |\partial_xu|^3 \,dx\, ds
			\\&\quad
			\le \,\int_0^t \|\partial_x u\|_{L^\infty(\TT)}^3\, ds
			\\&\quad
			\le \,\int_0^t \|\partial_x u\|_{L^2(\TT)}^\frac{3}{2}
			\|\partial_x^2 u\|_{L^2(\TT)}^\frac{3}{2}\, ds
			\\&\quad
			\overset{\eqref{Eq168}}{\lesssim} \,\int_0^t \|\partial_x u\|_{L^2(\TT)}
			\|\partial_x^2 u\|_{L^2(\TT)}^2, ds
			\\&\quad
			\le \,\sup_{s\in [0,t]} \|\partial_x u(s)\|_{L^2(\TT)}
			\|\partial_x^2 u\|_{L^2([0,t]\times\TT)}^2
			\\&\quad
			\le \,\kappa \sup_{s\in [0,t]} \|\partial_x u(s)\|_{L^2(\TT)}^2\,+\,
			\tfrac{1}{\kappa}\|\partial_x^2 u\|_{L^2([0,T]\times\TT)}^4,
		\end{align}
		 for each $\kappa>0$.

		\textbf{$(\partial_x u)^2$-Term. } We use Lemma \ref{LE_3}, the embedding $H^1(\TT)\hookrightarrow L^\infty(\TT)$, Poincar\'e's inequality and the conservation of mass of $u$ to conclude furthermore
		\begin{align}&
			\biggl|	\sum_{k\in \ZZ} \int_0^t \gamma_u^2 \int_{\TT} \bigl(8((\partial_x\sigma_k)^2 - \sigma_k (\partial_x^2\sigma_k)) (F_{\delta,\epsilon}'(u))^2\,+\, (\partial_x^2(\sigma_k^2)) (F_{\delta,\epsilon}^2)''(u) \bigr) (\partial_x u)^2\,dx\,ds\biggr|\\&\quad
			\overset{\eqref{Eq121}}{\lesssim_\sigma}\, \int_0^t\int_{\TT}
			( (F_{\delta,\epsilon}'(u))^2 \,+\,|(F_{\delta,\epsilon}^2)''(u)|)(\partial_x u)^2
			\,dx\, ds \\&\quad
			\overset{\eqref{Eq21}, \eqref{Eq22}}{\lesssim_{n,\nu}}\,
			\int_0^t \int_{\TT} (|u|^{n-2}+1) (\partial_x u)^2\,dx\, ds \\&\quad
			\le \, \int_0^t (\|u\|_{L^\infty(\TT)}^{n-2}+1) \|\partial_x u\|_{L^2(\TT)}^2\, ds \\&\quad
			\lesssim \int_0^t {\cO\biggl(\|\partial_x u\|_{L^2(\TT)}^{n-2} \,+\, \biggl|\int_{\TT} u(0) \, dx\biggr|^{n-2}\,+\, 1 \biggr)}\|\partial_x^2 u\|_{L^2(\TT)}^2\, ds \\&\quad
			\le \,
			{\cO\biggl(\sup_{s\in [0,t]} \|\partial_x u(s)\|_{L^2(\TT)}^{n-2} \,+\, \biggl|\int_{\TT}u(0)\, dx \biggr|^{n-2}\,+\, 1 \biggr)} \|\partial_x^2 u\|_{L^2([0,t]\times \TT)}^2 \\&\quad
			\overset{\frac{n-2}{2}+\frac{4-n}{2}=1}{\lesssim_n }\, \kappa \sup_{s\in [0,t]} \|\partial_x u(s)\|_{L^2(\TT)}^2 \,+\,  \kappa {\cO\biggl|\int_{\TT}u(0)\, dx\biggr|^2} \,+\,\kappa \,+\, (\tfrac{1}{\kappa})^\frac{n-2}{4-n}\|\partial_x^2 u\|_{L^2([0,T]\times\TT)}^\frac{4}{4-n},
		\end{align}
		where again $\kappa>0$.
		
		\textbf{$(\partial_x u)^0$-Term. } 
		We use once more Lemma \ref{LE_3}, the embedding $H^1(\TT)\hookrightarrow L^\infty(\TT)$, Poincar\'e's inequality and the conservation of mass of $u$ to obtain		
		\begin{align}&\biggl|\sum_{k\in \ZZ}\int_0^t \gamma_u^2 \int_{\TT} (4\sigma_k\partial_x^4 \sigma_k -\partial_x^4(\sigma_k^2)) F_{\delta,\epsilon}^2(u)\,dx\, ds\biggr|\\&\quad\overset{\eqref{Eq121},\eqref{Eq117}}{\lesssim_\sigma}\, \int_0^t \int_{\TT} F_{\delta,\epsilon}^2(u)\,dx\, ds\\&\quad
			\overset{\eqref{Eq23}}{\lesssim_{n}} \,\int_0^t \int_{\TT}
			|u|^n\,+\, 1\,dx\, ds\\&\quad
			\le \, \int_0^t  \|u\|_{L^\infty(\TT)}^n\,+\,1 \, ds 
			\\&\quad \lesssim\int_0^t {\cO\biggl(  \|\partial_x u\|_{L^2(\TT)}\,+\, \biggl|\int_{\TT} u(0) \, dx\biggr|\biggr)^n} \,+\,1 \, ds 
			\\&\quad\lesssim_{n,T}\,   \int_0^t \|\partial_x u\|_{L^2(\TT)}^{n-2} \|\partial_x^2 u \|_{L^2(\TT)}^2\, ds\,+\,{\cO \biggl|\int_{\TT}u(0) \, dx \biggr|^n}\,+\,1\\&\quad
			\le \,  \sup_{s\in [0,t]} \|\partial_x u(s)\|_{L^2(\TT)}^{n-2}\|\partial_x^2 u \|_{L^2([0,t]\times \TT)}^2\,+\, {\cO \biggl|\int_{\TT}u(0) \, dx \biggr|^n} \,+\,1
			\\&\quad
			\overset{\frac{n-2}{2}+\frac{4-n}{2}=1}{\le }\,  \kappa\sup_{t'\in [0,t]} \|\partial_x u\|_{L^2(\TT)}^{2}\,+\, {\cO \biggl|\int_{\TT}u(0) \, dx \biggr|^n} \,+\,1\,+\, (\tfrac{1}{\kappa})^\frac{{\cO n-2}}{4-n}\|\partial_x^2 u \|_{L^2([0,T]\times \TT)}^\frac{{\cO 4}}{4-n}{\cO,}
		\end{align}
		for $\kappa>0$.
		
		\textbf{Closing the estimate.} We insert the previous estimates in the It\^o expansion \eqref{Eq25} and conclude that 
		\begin{align}&
			\tfrac{1}{2}\|\partial_x u(t)\|_{L^2(\TT)}^2\,+\, \int_0^t \int_{\TT} F_{\delta,\epsilon}^2(u)(\partial_x^3 u)^2\,dx\, ds \\&\quad
			\le \,\tfrac{1}{2}\|\partial_x u(0)\|_{L^2(\TT)}^2\,+\, |M(t)|\\&\qquad
			+\, C_{\delta, n,\nu,\sigma , T} \biggl[
			\kappa \sup_{s\in [0,t]} \|\partial_x u(s)\|_{L^2(\TT)}^2 \,+\, \tfrac{1}{\kappa} \|\partial_x^2 u \|_{L^2([0,T]\times \TT)}^4
			\,+\, (\tfrac{1}{\kappa})^\frac{n-2}{4-n} \|\partial_x^2 u \|_{L^2([0,T]\times \TT)}^\frac{4}{4-n}
			\biggr] \\&\qquad
			+\, C_{\delta, n,\nu,\sigma , T} {\cO\biggl[  \biggl|\int_{\TT} u(0)\, dx\biggr|^n \,+\, \kappa \biggl|\int_{\TT} u(0)\, dx\biggr|^2\,+\,\kappa\,+\, 1
			\biggr]},
		\end{align}
		where $M(t)$ is given by \eqref{Eq59}.
		Choosing $\kappa$ small enough, taking the supremum in $t$ until the stopping time 
		\begin{equation}\label{Eq64}
			\tau_m \,=\, \inf \biggl\{ t\in [0,T]:\, \sup_{s\in [0,t]} \| \partial_x u(s)\|_{L^2(\TT)}^2\,+\,\int_0^{t}\int_{\TT} (F_{\delta,\epsilon} (u))^2 (\partial_x^3 u)^2\, dx\, ds \,\ge \, m \biggr\}
		\end{equation} and absorbing the intermediate powers, we proceed to
		\begin{align}&
			\tfrac{1}{4}\sup_{t\in [0,\tau_m]}\|\partial_x u(t)\|_{L^2(\TT)}^2\,+\, \int_0^{\tau_m} \int_{\TT} F_{\delta,\epsilon}^2(u)(\partial_x^3 u)^2\,dx\, ds \\&\quad 
			\le \|\partial_xu(0)\|_{L^2(\TT)}^2\,+\, 2\sup_{t\in [0,\tau_m]}|M(t)| \\&\qquad +\, 
			C_{\delta, n,\nu,\sigma , T} {\cO\biggl[
			\biggl|\int_{\TT} u(0)\, dx\biggr|^n \,+\, \|\partial_x^2 u \|_{L^2([0,T]\times \TT)}^4\,+\,	1 
			\biggr]}.
		\end{align}
		We raise both sides to the power $\frac{p}{2}$ and use the {\cO Burkholder--Davis--Gundy} inequality to conclude
		\begin{align}&
			\EE\biggl[\sup_{t\in [0,\tau_m]}\|\partial_x u(t)\|_{L^2(\TT)}^p \,+\,\biggl(\int_0^{\tau_m} \int_{\TT} F_{\delta,\epsilon}^2(u)(\partial_x^3 u)^2\,dx\, ds\biggr)^\frac{p}{2}\biggr]
			\\&\quad
			\lesssim_{\delta,n,\nu,\sigma,p, T} 	\EE\Bigl[\|\partial_x u(0)\|_{L^2(\TT)}^p \,+\, 
			\langle M \rangle_{\tau_m}^\frac{p}{4}
			\Bigr]\\&\qquad
			+\, \EE{\cO\biggl[
			\biggl|\int_{\TT} u(0)\, dx\biggr|^\frac{pn}{2}\,+\, \|\partial_x^2 u \|_{L^2([0,T]\times \TT)}^{2p}
			\biggr]}	+\, 1.
		\end{align}
		From \eqref{Eq166}, we deduce that
		\begin{align} &
			\EE \Bigl[
			\langle M \rangle_{\tau_m}^\frac{p}{4}
			\Bigr] \,\lesssim_{\sigma,p} \, \EE\biggl[\biggl(
			\int_0^{\tau_m} \int_{\TT} (F_{\delta,\epsilon}(u))^2 (\partial_x^3 u)^2  \,dx \, ds\biggr)^\frac{p}{4}
			\biggr] \\&\quad \le\, \kappa \EE\biggl[\biggl(
			\int_0^{\tau_m} \int_{\TT} (F_{\delta,\epsilon}(u))^2 (\partial_x^3 u)^2  \,dx \, ds\biggr)^\frac{p}{2}
			\biggr]\,+\, \frac{1}{\kappa}
		\end{align}
		and therefore
		\begin{align}&
			\EE\biggl[\sup_{t\in [0,\tau_m]}\|\partial_x u(t)\|_{L^2(\TT)}^p \,+\, \biggl(\int_0^{\tau_m} \int_{\TT} F_{\delta,\epsilon}^2(u)(\partial_x^3 u)^2\,dx\, ds\biggr)^\frac{p}{2}\biggr]
			\\&\quad
			\lesssim_{\delta,n,\nu,\sigma,p, T} 	\EE{\cO\biggl[\|\partial_x u(0)\|_{L^2(\TT)}^p
			+\, 
			\biggl|\int_{\TT} u(0)\, dx\biggr|^\frac{pn}{2}\,+\, \|\partial_x^2 u \|_{L^2([0,T]\times \TT)}^{2p}
			\biggr]}	+\, 1
		\end{align}
		by choosing $\kappa>0$ again small.
		Letting $m\to \infty$, applying Fatou's lemma and using that $u(0)\sim u_0$, we obtain \eqref{Eq_En_E1}.
\end{proof}{\cO
As a second step to prove existence for \eqref{Eq107}, we take the limits $R\to \infty$ and $\epsilon\searrow 0$ in \eqref{Eq108} by following \cite[Proposition 4.7, Section 5]{dareiotis2021nonnegative}.}
\begin{proof}[Proof of Lemma \ref{Lemma_Ex_ud}]
	By Lemma \ref{Lemma_Ex_uder}, there exists for each $R$ and $\epsilon$ a  martingale solution 
	\[
	\bigl\{
	(\hat{\Omega}, \hat{\mathfrak{A}},\hat{\mathfrak{F}},\hat{\PP} ), \,(\hat{\beta}^{(k)})_{k\in \ZZ},\, \uderh
	\bigr\}
	\]
	to \eqref{Eq108} in the sense of Definition \ref{defi_sol_non_degenerate} with initial value $u_0$. 
	Due {\cO to} Lemma \ref{Lemma_Entropy_Est} and Lemma \ref{Lemma_EEeps}, $\uderh$ satisfies the estimate
	\begin{align}\begin{split}\label{Eq169}
			&
			\hat{\EE}\biggl[
			\sup_{t\in [0,T]} \|\partial_x\uderh(t))\|_{L^2(\TT)}^p\,+\, 
			\sup_{t\in [0,T]} \|G_{\delta,\epsilon}(\uderh(t))\|_{L^1(\TT)}^p
			\biggr]
			\\&\qquad +\, \hat{\EE} \biggl[\biggl(
			\int_0^T\int_{\TT}
			F_{\delta,\epsilon}^2(\uderh)(\partial_x^3 \uderh)^2 
			\,dx\, dt\biggr)^\frac{p}{2}\,+\, 
			\|\partial_x^2 \uderh\|_{L^2([0,T]\times \TT)}^{2p}
			\biggr]\\&\quad \lesssim_{\delta,n,\nu,\sigma,p, T}\,{\EE}{\cO\biggl[
			\|\partial_x u_0\|_{L^2(\TT)}^p\,+\, \biggl|\int_{\TT} u_0\, dx\biggr|^{2p}\,+\,
			\|G_{\delta,\epsilon}(u_0)\|_{L^1(\TT)}^p
			\biggr]}\,+\, 1,
		\end{split}
	\end{align}
	which is uniform in $R$ and $\epsilon$ by \cite[Remark 4.8]{dareiotis2021nonnegative}.
	The limiting procedures $R\to \infty$ and $\epsilon\searrow 0$ follow along the lines of \cite[Proposition 4.7, Section 5]{dareiotis2021nonnegative}. Indeed, while our mobility function $F_\delta$ is not homogeneous, the approximations $F_{\delta,\epsilon}$ still satisfy the growth bounds \eqref{Eq113} and \eqref{Eq116}, which is sufficient to estimate the nonlinear terms appearing in \eqref{Eq108} and to identify their limits. 
	
	{\color{OliveGreen}Finally,}  estimate \eqref{Eq170} follows by using lower semi-continuity of the norm with respect to weak-* convergence and Fatou's lemma in \eqref{Eq169}.
	{\color{OliveGreen} Indeed, we use that
		$F_{\delta,\epsilon}(\check{u}_{\delta,\epsilon}) \partial_x^3\check{u}_{\delta,\epsilon} \rightharpoonup \mathbbm{1}_{\{\udc> 0\}} F_\delta(\check{u}_\delta) \partial_x^3 \check{u}_{\delta} $, as $\epsilon\searrow 0$  for an appropriate subsequence  in $L^p (\check{\Omega} ; L^2([0,T]\times \TT))$ by \cite[Proposition 5.6]{dareiotis2021nonnegative}. Thus, it follows
		\begin{align}
			& \check{\EE}\Bigl[\|
			\mathbbm{1}_{\{\udc> 0\}}F_{\delta}(\udc)\partial_x^3(  \udc)
			\|_{L^2([0,T]\times \TT)}^p \Bigr] \,\le \, \liminf_{\epsilon\searrow 0} 
			\check{\EE}\Bigl[\|
			F_{\delta}(\check{u}_{\delta,\epsilon})\partial_x^3(  \check{u}_{\delta,\epsilon})
			\|_{L^2([0,T]\times \TT)}^p \Bigr] ,
		\end{align}
		and the right-hand side can be estimated using the same argument as $R\to \infty$ in \eqref{Eq169}. Analogously, we bound
		\begin{align}
			\check{\EE}\biggl[
			\sup_{t\in [0,T]} \|\partial_x\udc(t)\|_{L^2(\TT)}^p \biggr] \quad \text{ and }
			\quad 
			\check{\EE}\Bigl[
			\|\partial_x^2 \udc\|_{L^2([0,T]\times \TT)}^{2p}
			\Bigr],
		\end{align}
		based on the convergence $\check{u}_{\delta,\epsilon}\rightharpoonup^* \check{u}_\delta$ in $L^p(\check{\Omega}; L^\infty(0,T;H^1(\TT)))$ and $L^{2p}(\check{\Omega}; L^2([0,T]\times \TT))$, see again \cite[Proposition 5.6]{dareiotis2021nonnegative}. For the remaining term involving $G_{\delta}(\udc)$ one has to argue slightly differently. Namely, we use $\check{u}_{\delta,\epsilon}\to \udc$ in $C([0,T]\times \TT)$, almost surely, by \cite[Proposition 5.4]{dareiotis2021nonnegative} together with Fatou's lemma and definitions \eqref{Eq_Gd} and \eqref{Eq_Gde}, to deduce that
		\begin{align}&
			\liminf_{\epsilon\searrow 0} G_{\delta,\epsilon}(\check{u}_{\delta,\epsilon})\,=\, 
			\liminf_{\epsilon\searrow 0} \int_{\RR} \mathbbm{1}_{(\check{u}_{\delta,\epsilon},\infty)}(r') \int_{\RR} 
			\frac{\mathbbm{1}_{(r',\infty)}(r'')}{F_{\delta,\epsilon}^2(r'')}
			\,dr''\, dr'
			\\&\quad \ge \, 
			\int_{\RR} \biggl(\liminf_{\epsilon\searrow 0} \mathbbm{1}_{(\check{u}_{\delta,\epsilon},\infty)}(r')\biggr) \biggl(\liminf_{\epsilon\searrow 0} \int_{\RR} 
			\frac{\mathbbm{1}_{(r',\infty)}(r'')}{F_{\delta,\epsilon}^2(r'')}
			\,dr''\biggr)\, dr'
			\\&\quad \ge \, 
			\int_{\RR}  \mathbbm{1}_{(\check{u}_{\delta},\infty)}(r') \int_0^\infty
			\frac{\mathbbm{1}_{(r',\infty)}(r'')}{F_{\delta}^2(r'')}
			\,dr''\, dr' \,=\, G_\delta(\check{u}_\delta).
		\end{align}
		It follows that
		\begin{align}&
			\check{\EE}\biggl[\sup_{t\in[0,T]}\int_{\TT} G_{\delta}(\check{u}_{\delta}(t))\,dx\biggr] \, \le \,
			\check{\EE}\biggl[ \sup_{t\in[0,T]} \int_{\TT}
			\liminf_{\epsilon\searrow 0} G_{\delta,\epsilon}(\check{u}_{\delta,\epsilon}(t))\,dx\biggr]
			\\&
			\le \, 
			\check{\EE}\biggl[ \sup_{t\in[0,T]}
			\liminf_{\epsilon\searrow 0} \int_{\TT} G_{\delta,\epsilon}(\check{u}_{\delta,\epsilon}(t))\,dx\biggr] \,\le \,
			\liminf_{\epsilon\searrow 0}	\check{\EE}\biggl[	 \sup_{t\in[0,T]}
			\int_{\TT} G_{\delta,\epsilon}(\check{u}_{\delta,\epsilon}(t))\,dx\biggr] ,
		\end{align}
		again by Fatou's lemma. An estimate on the right-hand side can be obtained by letting $R\to \infty$ in \eqref{Eq169} using the same line of arguments.
	}
\end{proof}
\appendix
\section{Technical estimates on the approximate mobilities and functionals}\label{App_A}
{\color{OliveGreen}In this appendix we collect the vital estimates on the approximating mobilities and the correspondingly defined functionals, as defined in Section \ref{Sec_approx_functions}. They are at the heart of the various applications of the stochastic compactness method, since they result in bounds on the approximate solutions to the stochastic thin-film equation, which are uniform in $(R,\epsilon)$ and $\delta$, respectively. We focus first on the estimates, which are employed in the proof of the approximate $\log$-entropy estimate Lemma \ref{Lemma_log_Entr_Approx_level}.}

\begin{lemma}\label{Lemma_prop_Jd}It holds that
	\begin{align}&\label{Eq32}
		J_{\delta,\epsilon}^+(r)\,\lesssim_{n,\nu} \, r_+^{n-2},
		\\& \label{Eq37}
		J_{\delta,\epsilon}^-(r)\,\lesssim_{\delta,n,\nu} \, r_-,
		\\& \label{Eq36}
		|I_{\delta,\epsilon}(r)| \,\lesssim_{n,\nu} \, r_+^{n-2}\,+\, C_{\delta,n,\nu} r_-.
	\end{align}
\end{lemma}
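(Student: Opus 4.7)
The plan is to handle the three estimates separately, in each case reducing \eqref{Eq_Jde} and \eqref{Eq127} to single integrals that can be bounded pointwise using the explicit form \eqref{Eq_Fd} of $F_\delta$ together with the growth bound \eqref{Eq123}.

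For \eqref{Eq32}, first observe that the integrand defining $J_{\delta,\epsilon}$ is non-negative, so $J_{\delta,\epsilon}^+(r)=J_{\delta,\epsilon}(r)$ for $r\geq 0$ and $J_{\delta,\epsilon}^+(r)=0$ for $r<0$. Swapping the order of integration for $r\geq 0$ yields
\[
J_{\delta,\epsilon}(r)=\int_0^\infty \min(r,r'')(F_{\delta,\epsilon}''(r''))^2\,dr''.
\]
Combining \eqref{Eq123} with $K_\epsilon(r'')\geq r''$ on $[0,\infty)$ and $n-4<0$ gives $(F_{\delta,\epsilon}''(r''))^2\lesssim_{n,\nu}(r'')^{n-4}$ for $r''\geq 0$. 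On $[0,r]$ the resulting integrand is $(r'')^{n-3}$ (integrable at $0$ since $n-3>-1$) and on $[r,\infty)$ it is $r(r'')^{n-4}$ (integrable at $\infty$ since $n-4<-1$); both pieces integrate to an $n$-dependent multiple of $r^{n-2}$.

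For \eqref{Eq37} I would exploit that $K_\epsilon$ is even, so $F_{\delta,\epsilon}$ is even and $(F_{\delta,\epsilon}'')^2$ is even. For $r<0$, an elementary calculation based on this evenness gives
\[
J_{\delta,\epsilon}^-(r)=-J_{\delta,\epsilon}(r)\leq 2|r|\int_0^\infty (F_{\delta,\epsilon}''(r''))^2\,dr''.
\]
The remaining step is to bound the last integral by a constant depending only on $\delta,n,\nu$, uniformly in $\epsilon$; a direct application of \eqref{Eq123} is too coarse and yields a divergent factor $\epsilon^{n-3}$. Instead I would use the sharper pointwise estimate $|F_\delta''(s)|\lesssim_{n,\nu}s^{\nu/2-2}/\delta^l$ on $\{s\leq\delta\}$, which follows from directly differentiating \eqref{Eq_Fd} and using $l=(\nu-n)/2$. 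Splitting $[0,\infty)$ according to whether $K_\epsilon(r'')\leq\delta$ or $K_\epsilon(r'')\geq\delta$: on the former, the substitution $r''=\epsilon t$ together with $\nu-4>-1$ yields a contribution $\lesssim\epsilon^{\nu-3}/\delta^{2l}\leq\delta^{\nu-3-2l}=\delta^{n-3}$ uniformly in $\epsilon$, while on the latter \eqref{Eq123} combined with $K_\epsilon(r'')\geq\max(r'',\delta)/\sqrt{2}$ yields a contribution of the same order.

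For \eqref{Eq36}, the relation $L_{\delta,\epsilon}''(r')=J_{\delta,\epsilon}(r')/F_{\delta,\epsilon}^2(r')$ from \eqref{Eq_Lde} inserted into \eqref{Eq127} simplifies $I_{\delta,\epsilon}$ to
\[
I_{\delta,\epsilon}(r)=\int_0^r \frac{J_{\delta,\epsilon}(r') F_{\delta,\epsilon}'(r')}{F_{\delta,\epsilon}(r')}\,dr'.
\]
Differentiating $\log F_\delta(s)=(\nu/2)\log s-\log(s^l+\delta^l)$ gives $|F_\delta'(s)/F_\delta(s)|\leq (\nu/2+l)/s$, and combined with $|K_\epsilon'(r')|\leq 1$ this yields the logarithmic derivative bound $|F_{\delta,\epsilon}'(r')/F_{\delta,\epsilon}(r')|\lesssim_{n,\nu}1/K_\epsilon(r')\leq 1/|r'|$. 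Splitting the interval of integration into its non-negative and non-positive parts, on $[0,r_+]$ the bound \eqref{Eq32} produces an integrand of size $(r')^{n-3}$ integrating to $r_+^{n-2}$, while on $[-r_-,0]$ the bound \eqref{Eq37} produces a bounded integrand $\lesssim_{\delta,n,\nu}1$ integrating to $C_{\delta,n,\nu}r_-$, giving the claimed sum. The hardest part will be the uniform-in-$\epsilon$ bound on $\int_0^\infty (F_{\delta,\epsilon}''(r''))^2\,dr''$ needed for \eqref{Eq37}: the naive estimate from \eqref{Eq123} is insufficient, and one must revisit the explicit form of $F_\delta$ to exploit the improved decay of $F_\delta''$ on $\{s\leq\delta\}$, where the regularization is effective.
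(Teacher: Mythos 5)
Your proof takes essentially the same route as the paper's: the same pointwise bounds $(F_{\delta,\epsilon}'')^2\lesssim_{n,\nu} K_\epsilon^{n-4}$ globally and $(F_{\delta,\epsilon}'')^2\lesssim_{n,\nu} \delta^{-2l}K_\epsilon^{\nu-4}$ near the origin for the $J_{\delta,\epsilon}$ bounds, and the same logarithmic-derivative bound $F_{\delta,\epsilon}'/F_{\delta,\epsilon}\lesssim_{n,\nu} 1/K_\epsilon$ for the $I_{\delta,\epsilon}$ bound. One small correction to your scaling argument for \eqref{Eq37}: since $\nu>3$, the integral $\int_0^\infty(1+t^2)^{(\nu-4)/2}\,dt$ is \emph{divergent} (the condition $\nu-4>-1$ controls integrability at $0$, not at infinity), so the finite upper cutoff $t\approx\delta/\epsilon$ from the region $\{K_\epsilon\le\delta\}$ matters and the contribution is of order $(\epsilon^{\nu-3}+\delta^{\nu-3})/\delta^{2l}$ rather than $\epsilon^{\nu-3}/\delta^{2l}$; moreover your step $\epsilon^{\nu-3}/\delta^{2l}\le\delta^{n-3}$ would require $\epsilon\le\delta$, which is not assumed. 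Neither issue affects the conclusion, because only a $\delta$-dependent bound is needed and $\epsilon\in(0,1)$ with $\nu>3$ give $\epsilon^{\nu-3}\le 1$; the paper avoids both points by bounding directly $\int_0^{\delta/2}K_\epsilon^{\nu-4}(r)\,dr\le\int_0^{\delta/2}r^{\nu-4}\,dr\eqsim_\nu(\delta/2)^{\nu-3}$.
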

\begin{proof}
	Ad \eqref{Eq32}. From \eqref{Eq_Jde} and \eqref{Eq123}, we  conclude that
	\begin{align}\label{Eq1}
		{\color{OliveGreen}0\,\le \,} J_{\delta,\epsilon}(r)\,\lesssim_{n,\nu}\, \int_0^r \int_{r'}^\infty (K_\epsilon(r''))^{n-4}\, dr'' \, dr'
		\,\le \,
		\int_0^r \int_{r'}^\infty (r'')^{n-4}\, dr'' \, dr' \, \eqsim_{n}\, r^{n-2}, \quad r\ge 0.
	\end{align}
	{\color{OliveGreen}At the same time we see from \eqref{Eq_Jde} that $J_{\delta,\epsilon}(r)\le 0$ for $r<0$.}

	Ad \eqref{Eq37}. We observe that 
	\begin{align}
		0\, \ge \, J_{\delta,\epsilon}(r)\, \ge  \,  - \int_r^0 \int_{-\infty}^\infty (F_{\delta,\epsilon}''(r''))^2\, dr''\, dr'\,\gtrsim_{\delta,n,\nu}\, r,\quad r\le  0
	\end{align}
	{\cO follows},
	as soon as we can verify 
	\begin{equation}\label{Eq40}
		\int_{-\infty}^\infty (F_{\delta,\epsilon}''(r))^2\, dr\,\lesssim_{\delta,n,\nu}\, 1.
	\end{equation}
	To this end, we use that  $F_{\delta,\epsilon}$ is an even function {\cO to} obtain
	\begin{equation}\label{Eq41}
		\int_{-\infty}^\infty (F_{\delta,\epsilon}''(r))^2\, dr\,\eqsim\, \int_0^\infty (F_{\delta,\epsilon}''(r))^2\, dr\,=\, 
		\int_0^\frac{\delta}{2} (F_{\delta,\epsilon}''(r))^2\, dr\,+\, \int_\frac{\delta}{2}^\infty (F_{\delta,\epsilon}''(r))^2\, dr.
	\end{equation}
	Using \eqref{Eq123}, we can estimate the latter integral by
	\begin{align}\label{Eq42}
		\int_\frac{\delta}{2}^\infty (F_{\delta,\epsilon}''(r))^2\, dr\,\lesssim_{n,\nu} \, \int_\frac{\delta}{2}^\infty (K_\epsilon(r))^{n-4}\, dr\,\le\, 
		\int_\frac{\delta}{2}^\infty r^{n-4}\, dr\, \eqsim \, \bigl(\tfrac{\delta}{2}\bigr)^{n-3}.
	\end{align}
	From \eqref{Eq34} and \eqref{Eq33}, we conclude furthermore that
	\begin{align}&\label{Eq130}
		{\cO|}F_{\delta}'(r){\cO|}\, \lesssim_{n,\nu} \frac{r^{\frac{\nu}{2}-1}}{\delta^l },\quad r>0,
		\\&\label{Eq131}
		|F_{\delta}''(r)|\, \lesssim_{n,\nu} \frac{r^{\frac{\nu}{2}-2}}{\delta^l }, \quad r>0.
	\end{align}
	Using {\cO\eqref{Eq39}--\eqref{Eq45}}, we proceed to
	\begin{align}\label{Eq133}
		F_{\delta,\epsilon}''(r)\,\lesssim_{n,\nu}\,
		\delta^{-l}\biggl(
		(K_\epsilon(r))^{\frac{\nu}{2}-2}\,+\, \frac{(K_\epsilon(r))^{\frac{\nu}{2}-1}}{K_\epsilon(r)}
		\biggr)\, \lesssim_{\delta,n,\nu}\,
		(K_\epsilon(r))^{\frac{\nu}{2}-2}.
	\end{align}
	Using this estimate, we derive that
	\begin{equation}\label{Eq125}
		\int_0^{\frac{\delta}{2}} (F_{\delta,\epsilon}''(r))^2\, dr\,\lesssim_{\delta,n,\nu}\, \int_0^{\frac{\delta}{2}} (K_\epsilon(r))^{\nu-4}\, dr\, \le \,
		\int_0^{\frac{\delta}{2}} r^{\nu-4}\, dr
		\,\eqsim_\nu\, \bigl(\tfrac{\delta}{2}\bigr)^{\nu-3}.
	\end{equation}
	Combining this with \eqref{Eq41} and \eqref{Eq42}, we conclude \eqref{Eq40}. {\color{OliveGreen}The claimed estimate \eqref{Eq37} follows by using that $J_{\delta,\epsilon}(r)\ge 0$ for $r>0$, see \eqref{Eq1}.}

	Ad \eqref{Eq36}. By \eqref{Eq_Lde} and {\cO the definition \eqref{Eq127} of $I_{\delta,\epsilon}$} it holds
	\[
	I_{\delta,\epsilon}(r)\,=\, 
	\int_0^r
	J_{\delta,\epsilon}(r') \frac{F_{\delta,\epsilon}'(r')}{ F_{\delta,\epsilon}(r')}
	\, dr',
	\]
	where we can express the fraction as
	\begin{align}
		\frac{F_{\delta,\epsilon}'(r)}{ F_{\delta,\epsilon}(r)}\,=\,  \frac{F_\delta'(K_\epsilon(r))}{F_\delta(K_\epsilon(r))}  K_\epsilon'(r){\cO,}
	\end{align}
	using the chain rule.
	From \eqref{Eq_Fd} and \eqref{Eq34} we deduce 
	\[
	\frac{F_\delta'(r)}{F_\delta(r)}\,\lesssim_{n,\nu}\, r^{-1}, \quad r>0,
	\]
	{\cO so} that
	\begin{align}
		\frac{F_{\delta,\epsilon}'(r)}{ F_{\delta,\epsilon}(r)}\,\lesssim_{n,\nu}\,  \frac{1}{K_\epsilon(r)}
	\end{align}
	by  \eqref{Eq44}.
	Using \eqref{Eq32}, we observe that
	\begin{align}&
		|I_{\delta,\epsilon}(r)|\,\lesssim_{n,\nu}
		\int_0^r 
		\frac{J^+_{\delta,\epsilon}(r')}{K_\epsilon(r')}
		\, dr'\,\lesssim_{n,\nu}\, 
		\int_0^r 
		\frac{(r')^{n-2}}{K_\epsilon(r')}
		\, dr'
		\\&\quad 
		\le \, 
		\int_0^r 
		{(r')^{n-3}}
		\, dr'\, \eqsim_{n} \, r^{n-2}, \quad r\ge 0.
	\end{align}
	For $r<0$, we use instead \eqref{Eq37} to conclude
	\begin{align}
		|I_{\delta,\epsilon}(r)|\, \lesssim_{n,\nu}\,
		\int_r^0 \frac{J_{\delta,\epsilon}^-(r')}{K_\epsilon(r')}\, dr'
		\, \lesssim_{\delta, n,\nu} \, 
		\int_r^0 \frac{r_-'}{K_\epsilon(r')}\, dr'\, \le\, r_-, \quad r\,<\, 0.
	\end{align}
\end{proof}

\begin{lemma}\label{Lemma_prop_Int}
	It holds that
	\begin{align}
		\label{Eq43}
		\biggl|\int_0^r
		L_{\delta,\epsilon}''(r')F_{\delta,\epsilon}(r')\, dr'\biggr|\, \lesssim_{n,\nu} \, r_+^{\frac{n}{2}-1}\,+\, C_{\delta,n,\nu} \Bigl(r_-^{2-\frac{n}{2}}\,+\, r_-^{2-\frac{\nu}{2}}\Bigr).
	\end{align}
\end{lemma}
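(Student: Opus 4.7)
The plan is to rewrite $L_{\delta,\epsilon}''(r')F_{\delta,\epsilon}(r')=J_{\delta,\epsilon}(r')/F_{\delta,\epsilon}(r')$ using the definition \eqref{Eq_Lde}, so that the task reduces to bounding $\int_0^r J_{\delta,\epsilon}(r')/F_{\delta,\epsilon}(r')\,dr'$, and then to treat the cases $r\geq 0$ and $r<0$ separately. A unifying pointwise ingredient is the lower bound
\[
F_{\delta,\epsilon}(r')=F_\delta(K_\epsilon(r'))\,\geq\, F_\delta(|r'|)\,\geq\, \tfrac{1}{2}\min\bigl(|r'|^{n/2},\,|r'|^{\nu/2}\delta^{-l}\bigr),
\]
which follows from \eqref{Eq_Fd} by bounding its denominator by twice the maximum of its two summands; this splits the analysis at the threshold $|r'|=\delta$.

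For $r\geq 0$ I would use the $\delta$-uniform bound $J_{\delta,\epsilon}^+(r')\lesssim_{n,\nu}(r')^{n-2}$ from \eqref{Eq32} on the range $r'\in[\delta,r]$ (nonempty only if $r\geq\delta$): there the integrand is $\lesssim_{n,\nu}(r')^{n/2-2}$ and integrates, since $n>2$, to $\lesssim_{n,\nu}r^{n/2-1}$. On the range $r'\in[0,\min(r,\delta)]$ the naive combination with the small-$r'$ branch of the lower bound yields $\lesssim\delta^l(r')^{n-2-\nu/2}$, which is \emph{not} integrable at zero for $n$ close to $2$. I would fix this by establishing the sharper estimate $J_{\delta,\epsilon}(r')\lesssim_{n,\nu}r'\delta^{n-3}$ for $r'\leq\delta$, obtained by using \eqref{Eq133} on $s\in[r',\delta]$ and \eqref{Eq123} on $s\in[\delta,\infty)$ in the computation of $\int_{r'}^\infty(F_{\delta,\epsilon}''(s))^2\,ds$; the dominant contribution is $\eqsim\delta^{n-3}$ (the $\delta^{-2l}$ from \eqref{Eq133} combines with the $\delta^{\nu-3}$ produced by integration in $s$, using $2l=\nu-n$). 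Combined with $F_{\delta,\epsilon}(r')\gtrsim r'^{\nu/2}/\delta^l$, the integrand becomes $\lesssim_{n,\nu}\delta^{(n+\nu)/2-3}(r')^{1-\nu/2}$ and integration yields $\lesssim_{n,\nu}\delta^{(n+\nu)/2-3}\min(r,\delta)^{2-\nu/2}$. The compatibility assumption \eqref{Eq135} that $\nu<6-n$ ensures $3-(n+\nu)/2>0$, which is exactly what makes this last expression $\lesssim_{n,\nu}r^{n/2-1}$ with a $\delta$-independent constant, completing the positive case.

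For $r<0$ the strategy is analogous but more straightforward: using $J_{\delta,\epsilon}^-(r')\lesssim_{\delta,n,\nu}r'_-$ from \eqref{Eq37} together with the same split lower bounds on $F_{\delta,\epsilon}$, the integral over $[r,0]$ splits at $|r'|=\delta$ and produces $\lesssim_{\delta,n,\nu}r_-^{2-n/2}$ from the regime $|r'|\geq\delta$ and $\lesssim_{\delta,n,\nu}r_-^{2-\nu/2}$ from $|r'|\leq\delta$. Since both the $\delta$-dependent constant in \eqref{Eq37} and the $\delta^l$ from the small-$r'$ lower bound on $F_{\delta,\epsilon}$ are freely absorbed into a unified $C_{\delta,n,\nu}$, this matches the stated form.

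The main obstacle will be the small-$r'$ regime in the positive case: the pointwise product of the bounds directly available from Lemma \ref{Lemma_prop_Jd} is too crude to yield the $\delta$-independent constant in front of $r_+^{n/2-1}$ demanded by the statement. Proving the refined estimate $J_{\delta,\epsilon}(r')\lesssim r'\delta^{n-3}$ near zero, and then verifying that the assumption $\nu<6-n$ precisely cancels the residual powers of $\delta$, is the crux of the argument, and it is here that the structural condition \eqref{Eq135} enters decisively.
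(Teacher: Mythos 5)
Your proposal is correct and follows essentially the same route as the paper: rewrite $L_{\delta,\epsilon}''F_{\delta,\epsilon}=J_{\delta,\epsilon}/F_{\delta,\epsilon}$, split at $|r'|=\delta$, treat the signs separately, and observe that for small positive $r'$ a sharper bound on $J_{\delta,\epsilon}$ is needed, with \eqref{Eq135} providing the crucial cancellation. The one cosmetic difference is that the paper derives the pointwise estimate $J_{\delta,\epsilon}(r')\lesssim_{n,\nu}\delta^{-l}(r')^{(\nu+n)/2-2}$ on $(0,\delta)$ directly by multiplying \eqref{Eq133} and \eqref{Eq134} inside the double integral (using \eqref{Eq135} to ensure convergence of the inner integral at infinity), so that the ratio $J_{\delta,\epsilon}/F_{\delta,\epsilon}\lesssim_{n,\nu}(r')^{n/2-2}$ is already $\delta$-free before integration, whereas you get the alternate bound $J_{\delta,\epsilon}(r')\lesssim r'\delta^{n-3}$ by splitting $\int_{r'}^\infty(F_{\delta,\epsilon}'')^2$ at $\delta$ and only invoke \eqref{Eq135} at the very end to absorb the residual power $\delta^{(n+\nu)/2-3}$; both variants are valid and lead to the same result.
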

\begin{proof}
	We notice that
	\begin{align}\label{Eq140}
		L_{\delta,\epsilon}''(r)F_{\delta,\epsilon}(r)\,=\, \frac{J_{\delta,\epsilon}(r)}{F_{\delta,\epsilon}(r)}
	\end{align}
	by \eqref{Eq_Lde} and estimate this term by distinguishing different cases of $r$. 
	
	For $r\ge \delta$ we have that
	\begin{equation}\label{Eq46}
		F_{\delta,\epsilon}(r)\, \ge\,
		F_{\delta}(r)\,\ge \,\frac{r^\frac{n+\nu}{2}}{2 r^\frac{\nu}{2}}\,\eqsim\, r^\frac{n}{2}, \quad r\ge \delta,
	\end{equation}
	by \eqref{Eq_Fd}.
	Hence, using \eqref{Eq32}, we conclude that
	\begin{equation}\label{Eq139}
		\frac{J_{\delta,\epsilon}(r)}{F_{\delta,\epsilon}(r)}\, \lesssim_{n,\nu} \, \frac{r^{n-2}}{r^\frac{n}{2}}
		\, \le \, r^{\frac{n}{2}-2}, \quad r\ge \delta.
	\end{equation}
	
	For $r\in (0,\delta)$, we provide another estimate on $J_{\delta,\epsilon}(r)${\color{OliveGreen}. Namely, we recall that in \eqref{Eq133} we proved that
	\[
	F_{\delta,\epsilon}''(r)\,\lesssim_{n,\nu} \, \frac{(K_\epsilon(r))^{\frac{\nu}{2}-2}}{\delta^l} 
	\]
	for any $r\in \RR$. }
	Hence, using \eqref{Eq135}, \eqref{Eq_Jde} and \eqref{Eq134}, we compute that
	\begin{align}\begin{split}\label{Eq10}
			&
		J_{\delta,\epsilon}(r)\,\lesssim_{n,\nu} \,\delta^{-l}\int_0^r \int_{r'}^\infty
		(K_\epsilon(r''))^{\frac{\nu+n}{2}-4}
		\, dr'' \, dr' \\&\quad \le \, 
		\delta^{-l}\int_0^r \int_{r'}^\infty
		(r'')^{\frac{\nu+n}{2}-4}
		\, dr'' \, dr'
		\,
		\eqsim_{n,\nu}
		\,   \delta^{-l}
		r^{\frac{\nu+n}{2}-2},\quad r\ge 0.
		\end{split}
	\end{align}
	We moreover have that
	\begin{equation}\label{Eq47}
		F_{\delta,\epsilon}(r) \ge F_\delta(r)\, \ge \, \frac{r^\frac{n+\nu}{2}}{2 \delta^l r^\frac{n}{2}}\,\eqsim\, \delta^{-l} r^\frac{\nu}{2},\quad  r \in (0,\delta).
	\end{equation}
	Consequently, we arrive also in this case at
	\begin{equation}\label{Eq138}
		\frac{J_{\delta,\epsilon}(r)}{F_{\delta,\epsilon}(r)}\, \lesssim_{n,\nu} \, \frac{ \delta^{-l}r^{\frac{\nu+n}{2}-2}}{\delta^{-l}r^\frac{\nu}{2}}
		\, \le \, r^{\frac{n}{2}-2},\quad r\in (0,\delta).
	\end{equation}
	Hence, using \eqref{Eq140}, \eqref{Eq139} and \eqref{Eq138}, we deduce that 
	\begin{align}
		\biggl|\int_0^r
		L_{\delta,\epsilon}''(r')F_{\delta,\epsilon}(r')\, dr'\biggr|\, \lesssim_{n,\nu} \, \int_0^r
		(r')^{\frac{n}{2}-2}\, dr'\, \eqsim_{n}\, r^{\frac{n}{2}-1}, \quad r\ge 0.
	\end{align}
	
	For $r<0$, we use that $F_{\delta,\epsilon}$ is an even function to conclude that
	\begin{align}&\label{Eq150}
		F_{\delta,\epsilon}(r)\, \gtrsim \, r_-^\frac{n}{2},\quad r\le-\delta,
		\\&\label{Eq151}
		F_{\delta,\epsilon}(r)\, \gtrsim \, \delta^{-l} r_-^\frac{\nu}{2},\quad r\in (-\delta,0) 
	\end{align}
	from \eqref{Eq46} and \eqref{Eq47}.
	Invoking \eqref{Eq37}, we arrive at 
	\begin{equation}\label{Eq189}
		\frac{|J_{\delta,\epsilon}(r)|}{F_{\delta,\epsilon}(r)}\,\lesssim_{\delta,n,\nu} \, r_-^{1-\frac{n}{2}}\,+\, r_-^{1-\frac{\nu}{2}} ,\quad r<0.
	\end{equation}
	Hence, integration together with \eqref{Eq140} yields
	\[
	\biggl|\int_r^0
	L_{\delta,\epsilon}''(r')F_{\delta,\epsilon}(r')\, dr'\biggr|\, \lesssim_{\delta, n,\nu} \, \int_r^0
	(r_-')^{1-\frac{n}{2}}\,+\, (r_-')^{1-\frac{\nu}{2}}\, dr'\,\eqsim_{n,\nu}\,
	r_-^{2-\frac{n}{2}}\,+\, r_-^{2-\frac{\nu}{2}},\quad r\,<\, 0.
	\]
\end{proof}
\begin{lemma}\label{Lemma_Lde}
	It holds that
	\begin{align}&
		\label{Eq142}
		L_{\delta,\epsilon}^+(r)\,\le\, (r-1)\,-\, \log(r), \quad r\ge \delta,
		\\& \label{Eq143}
		L_{\delta,\epsilon}^{-}(r)\, \lesssim_{\delta,n,\nu} \bigl(G_{\delta,\epsilon}(r)\,+\, 1\bigr)r_-.
	\end{align}
\end{lemma}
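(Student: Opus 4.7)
The proof splits into the two claims \eqref{Eq142} and \eqref{Eq143}. Both rely on the Taylor-type representation
\[
L_{\delta,\epsilon}(r) \,=\, \int_1^r (r - r')\, \frac{J_{\delta,\epsilon}(r')}{F_{\delta,\epsilon}^2(r')}\, dr',
\]
which follows from \eqref{Eq_Lde} together with the boundary conditions $L_{\delta,\epsilon}(1) = L_{\delta,\epsilon}'(1) = 0$ and the identity $L_{\delta,\epsilon}'' = J_{\delta,\epsilon}/F_{\delta,\epsilon}^2$. From \eqref{Eq_Jde} I record the sign structure $J_{\delta,\epsilon} \ge 0$ on $[0,\infty)$ and $J_{\delta,\epsilon} \le 0$ on $(-\infty,0]$, which is exploited throughout.

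For \eqref{Eq142}, my plan is to establish the pointwise bound $J_{\delta,\epsilon}(r')/F_{\delta,\epsilon}^2(r') \lesssim_{n,\nu} (r')^{-2}$ for $r' \ge \delta$. This comes from combining the upper estimate $J_{\delta,\epsilon}(r') \lesssim_{n,\nu} (r')^{n-2}$ from \eqref{Eq32} with the lower bound $F_{\delta,\epsilon}^2(r') \ge F_\delta^2(r') \ge (r')^n/4$ derived from \eqref{Eq46}. Inserting this pointwise inequality into the Taylor representation and using the direct identity $(r-1) - \log(r) = \int_1^r (r-r')(r')^{-2}\, dr'$ gives $L_{\delta,\epsilon}(r) \lesssim_{n,\nu} (r-1) - \log(r)$ for all $r \ge \delta$. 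Since $J_{\delta,\epsilon} \ge 0$ on $[0,\infty)$ forces $L_{\delta,\epsilon} \ge 0$ on the same interval, the estimate transfers to $L_{\delta,\epsilon}^+$ and yields \eqref{Eq142}.

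For \eqref{Eq143}, the case $r \ge 0$ is immediate because $L_{\delta,\epsilon}(r) \ge 0$ forces $L_{\delta,\epsilon}^-(r) = 0$. For $r < 0$, I split the Taylor representation at the origin:
\[
L_{\delta,\epsilon}(r) \,=\, \underbrace{\int_r^0 (r'-r)\, \frac{J_{\delta,\epsilon}(r')}{F_{\delta,\epsilon}^2(r')}\, dr'}_{=:\, I_1} \,+\, \underbrace{\int_0^1 (r'-r)\, \frac{J_{\delta,\epsilon}(r')}{F_{\delta,\epsilon}^2(r')}\, dr'}_{=:\, I_2}.
\]
The sign structure of $J_{\delta,\epsilon}$ gives $I_1 \le 0$ and $I_2 \ge 0$, whence $L_{\delta,\epsilon}^-(r) \le |I_1|$. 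Using $(r'-r) \le r_-$ on $(r,0)$, the bound $|J_{\delta,\epsilon}(r')| \lesssim_{\delta,n,\nu} r'_-$ from \eqref{Eq37}, and $r'_- \le r_-$, I obtain $|I_1| \lesssim_{\delta,n,\nu} r_-^2 \int_r^0 F_{\delta,\epsilon}^{-2}(r')\, dr'$.

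The main obstacle is then to control this integral uniformly in $\epsilon$, since $F_{\delta,\epsilon}^{-2}(0)$ blows up as $\epsilon \searrow 0$; this is precisely where the $G_{\delta,\epsilon}$-term on the right-hand side of \eqref{Eq143} earns its keep. My plan is to use that $F_{\delta,\epsilon}$ is even in $r'$ (inherited from $K_\epsilon$), giving $\int_r^0 F_{\delta,\epsilon}^{-2}(r')\, dr' = \int_0^{r_-} F_{\delta,\epsilon}^{-2}(s)\, ds \le \int_0^\infty F_{\delta,\epsilon}^{-2}(s)\, ds$. Applying Fubini to \eqref{Eq_Gde} yields $G_{\delta,\epsilon}(r) = \int_r^\infty (r'-r)\, F_{\delta,\epsilon}^{-2}(r')\, dr'$; restricting the integration to $r' \ge 0$ and discarding the non-negative term $\int_0^\infty r'\, F_{\delta,\epsilon}^{-2}(r')\, dr'$ produces $G_{\delta,\epsilon}(r) \ge r_- \int_0^\infty F_{\delta,\epsilon}^{-2}(r')\, dr'$. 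Chaining these inequalities gives $|I_1| \lesssim_{\delta,n,\nu} r_-\, G_{\delta,\epsilon}(r) \le r_-(G_{\delta,\epsilon}(r) + 1)$, which establishes \eqref{Eq143}.
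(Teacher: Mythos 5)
Your proof is correct. For \eqref{Eq142}, the argument coincides in substance with the paper's: both reduce to the pointwise inequality $J_{\delta,\epsilon}(r')/F_{\delta,\epsilon}^2(r')\lesssim_{n,\nu}(r')^{-2}$ for $r'\ge\delta$, obtained from the upper bound \eqref{Eq32} on $J_{\delta,\epsilon}$ and the lower bound \eqref{Eq46} on $F_{\delta,\epsilon}$, and then integrate twice between $1$ and $r$; whether one uses the iterated integral $\int_1^r\int_1^{r'}(r'')^{-2}\,dr''\,dr'$ (as in the paper) or its Taylor form $\int_1^r(r-r')(r')^{-2}\,dr'$ (your version) is a cosmetic difference.

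For \eqref{Eq143} you take a genuinely different route after the common first step $L_{\delta,\epsilon}^-(r)\le|I_1|$. The paper bounds the relevant double integral by $r_-\int_{-\infty}^0 J_{\delta,\epsilon}^-(r')F_{\delta,\epsilon}^{-2}(r')\,dr'$, splits at $-\delta$, works out the explicit $\epsilon$-asymptotics of the near-zero piece (showing it is $\lesssim_{\delta,n,\nu}\epsilon^{2-\nu}$), and then performs a separate computation to show that $G_{\delta,\epsilon}(r)\ge G_{\delta,\epsilon}(0)\gtrsim_{\delta,n,\nu}\epsilon^{2-\nu}$, so the blow-up can be absorbed into the $G_{\delta,\epsilon}$-term. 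You instead keep the integral over $(r,0)$, extract a factor $r_-^2$ from the numerator (using $(r'-r)\le r_-$ and $J_{\delta,\epsilon}^-(r')\lesssim_{\delta,n,\nu}r_-'\le r_-$ from \eqref{Eq37}), pass by evenness of $F_{\delta,\epsilon}$ to $\int_0^\infty F_{\delta,\epsilon}^{-2}$, and then read off the inequality $\int_0^\infty F_{\delta,\epsilon}^{-2}\le G_{\delta,\epsilon}(r)/r_-$ directly from Fubini applied to \eqref{Eq_Gde}, namely $G_{\delta,\epsilon}(r)=\int_r^\infty(r''-r)F_{\delta,\epsilon}^{-2}(r'')\,dr''\ge r_-\int_0^\infty F_{\delta,\epsilon}^{-2}$ for $r<0$. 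This sidesteps both explicit $\epsilon^{2-\nu}$ computations, is shorter, and in fact yields the marginally stronger conclusion $L_{\delta,\epsilon}^-(r)\lesssim_{\delta,n,\nu}r_-\,G_{\delta,\epsilon}(r)$ with no additional $+1$ needed.
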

\begin{proof}
	Ad \eqref{Eq142}.
	We let $r\ge \delta$ and use \eqref{Eq32}  and \eqref{Eq46} to conclude
	\[L_{\delta,\epsilon}(r)\,=\,
	\int_1^r\int_1^{r'} \frac{J_{\delta,\epsilon}(r'')}{F_{\delta,\epsilon}^2(r'')}\, dr''\, dr'\, \lesssim_{n,\nu} 
	\int_1^r\int_1^{r'} (r'')^{-2}\, dr''\, dr'\,=\, (r-1)\,-\,\log(r),\quad r\ge \delta.
	\]
	
	Ad \eqref{Eq143}. We point out that $L_{\delta,\epsilon}(r)\ge 0$ for $r\ge 0$, since it is convex on $(0,\infty)$ by {\cO\eqref{Eq_Jde} and} \eqref{Eq_Lde} with a minimum at $r=1$. For $r<0$, we use that
	\begin{align}&
		L_{\delta,\epsilon}(r)\,=\, 
		\int_{r}^{1}\int_{r'}^1
		\frac{J_{\delta,\epsilon}(r'')}{F_{\delta,\epsilon}^2(r'')}
		\, dr''\, dr'
		\, \ge\, -
		\int_r^1 \int_{r'}^1 \frac{J_{\delta,\epsilon}^-(r'')}{F_{\delta,\epsilon}^2(r'')}\, dr''\, dr'\\&\quad 
		=\, -\, 	\int_r^0 \int_{r'}^0 \frac{J_{\delta,\epsilon}^-(r'')}{F_{\delta,\epsilon}^2(r'')}\, dr''\, dr'\,
		\ge \,r \int_{-\infty}^0 \frac{J_{\delta,\epsilon}^-(r'')}{F_{\delta,\epsilon}^2(r'')}\,  dr'',\quad  r<0{\cO,}
	\end{align}
	and consequently
	\begin{align}\label{Eq158}
		L_{\delta,\epsilon}^-(r)\, \le \, 
		r_- \int_{-\infty}^0 \frac{J_{\delta,\epsilon}^-(r')}{F_{\delta,\epsilon}^2(r')}\,  dr'
		\,=\, r_- \biggl(
		\int_{-\infty}^{-\delta} \frac{J_{\delta,\epsilon}^-(r')}{F_{\delta,\epsilon}^2(r')}\,  dr'\,+\, 
		\int_{-\delta}^0 \frac{J_{\delta,\epsilon}^-(r')}{F_{\delta,\epsilon}^2(r')}\,  dr'\biggr),\quad r<0
		.
	\end{align}
	We use \eqref{Eq37} and \eqref{Eq150} to estimate 
	\begin{align}\label{Eq155}
		\int_{-\infty}^{-\delta} \frac{J_{\delta,\epsilon}^-(r')}{F_{\delta,\epsilon}^2(r')}\,  dr'
		\lesssim_{\delta,n,\nu} \, \int_{-\infty}^{-\delta} \frac{r_-'}{(r_-')^n}\,  dr'
		\,\eqsim_{\delta,n}\,1 
	\end{align}
	For the remaining part, we deduce form \eqref{Eq47} and the fact that $F_{\delta,\epsilon}$ is even
	\[
	F_{\delta,\epsilon}(r)\,\gtrsim\, \delta^{-l} (K_\epsilon(r))^\frac{\nu}{2} ,\quad r\in (-\delta,0).
	\]
	Thus, using {\color{OliveGreen}
	\eqref{Eq145}}
	and again \eqref{Eq37}, we obtain
	\begin{align}
		\int_{-\delta}^0 \frac{J_{\delta,\epsilon}^-(r')}{F_{\delta,\epsilon}^2(r')}\,  dr'\,
		\lesssim_{\delta,n,\nu} \, \int_{-\delta}^0 \frac{r_-'}{(K_\epsilon(r'))^{\nu}}\,  dr'\,
		\le \, \int_0^{\delta} {(K_\epsilon(r'))^{1-\nu}}\,  dr'\,\eqsim_\nu\,\int_{\epsilon}^{\delta+\epsilon} (r')^{1-\nu}
		\,dr'\,\lesssim_\nu\,\epsilon^{2-\nu}.
	\end{align}
	Combining this with \eqref{Eq155} we obtain that
	\begin{equation}\label{Eq159}
		\int_{-\infty}^0 \frac{J_{\delta,\epsilon}^-(r')}{F_{\delta,\epsilon}^2(r')}\,  dr'\, \lesssim_{\delta,n,\nu}
		\, 1+\epsilon^{2-\nu}.
	\end{equation}
	Next, from \eqref{Eq_Fd}, we deduce that
	\begin{equation}\label{Eq157}
		F_{\delta,\epsilon}(r)\,\le \, \delta^{-l} (K_\epsilon(r))^\frac{\nu}{2},
	\end{equation}
	which together with
	\eqref{Eq_Gde} and \eqref{Eq145} leads to 
	\begin{align}
		G_{\delta,\epsilon}(0)\,\ge\, 
		\delta^{2l}\int_0^\infty \int_{r'}^\infty  
		(K_\epsilon(r))^{-\nu}
		\,dr''\,dr'\,\eqsim_\nu\, 
		\int_0^\infty \int_{r'}^\infty  
		\delta^{2l} (r''+\epsilon)^{-\nu}
		\,dr''\,dr'\,\eqsim_{\delta,n,\nu}\, \epsilon^{2-\nu}.
	\end{align} 
	Since $G_{\delta,\epsilon}(r)$ is decreasing, we conclude
	\[
	G_{\delta,\epsilon}(r)
	\,\ge \, G_{\delta,\epsilon}(0)\,\gtrsim_{\delta,n,\nu} \, \epsilon^{2-\nu}, \quad r\le 0.
	\]
	In combination with \eqref{Eq158} and \eqref{Eq159}, we arrive at \eqref{Eq143}.
\end{proof}
{\cO Lemmas \ref{Lemma_prop_Jd}--\ref{Lemma_Lde} are used to prove the approximate $\log$-entropy estimate Lemma \ref{Lemma_log_Entr_Approx_level}, which, as laid out in Subsection \ref{Sec_strategy},  results in an energy estimate. However, while it is straightforward to bound the suppressed terms in \eqref{Eq9} using the displayed leading order term, conservation of mass and the surface energy itself, the lower order terms on the approximate level are more delicate to estimate. More precisely, in the proof of the $\delta$-uniform energy estimate Lemma \ref{Lemma_EE} we also use the following versions of the trivial inequalities}
\begin{align}&
	u^{n-3}\,\le \, (u^\frac{n}{2})^\frac{1}{2}(u^{\frac{n}{2}-2})^\frac{3}{2},
	\\&
	u^{n-2}\,\le \, u^\frac{n}{2} u^{\frac{n}{2}-2},
\end{align}
{\cO on the regularized level.}
\begin{lemma}\label{Lemma_F_de_prop} 
	It holds that
	\begin{align}&\label{Eq53}
		|((F_{\delta,\epsilon}')^2)'(r)|\,
		\lesssim_{n,\nu} \, (F_{\delta,\epsilon}(r))^\frac{1}{2} (F_{\delta,\epsilon}''(r))^\frac{3}{2},\\&
		\label{Eq54}
		|(F_{\delta,\epsilon}^2)'''(r)|
		\lesssim_{n,\nu} \, (F_{\delta,\epsilon}(r))^\frac{1}{2}
		(F_{\delta,\epsilon}''(r))^\frac{3}{2},
		\\& \label{Eq57}
		(F_{\delta,\epsilon}'(r))^2\, \lesssim_{n,\nu} \, F_{\delta,\epsilon}(r) F_{\delta,\epsilon}''(r),
		\\& \label{Eq58}
		\bigl|(F_{\delta,\epsilon}^2(r))''\bigr|\, \lesssim_{n,\nu} \, F_{\delta,\epsilon}(r) F_{\delta,\epsilon}''(r).
	\end{align}
\end{lemma}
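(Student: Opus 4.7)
The plan is to derive all four inequalities from a single set of pointwise comparabilities for $F_{\delta,\epsilon}$ and its first three derivatives against powers of $K_\epsilon$.

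\textbf{Step 1: Scaling estimates for $F_\delta$.} First I would establish, for all $s>0$,
\begin{equation*}
s F_\delta'(s)\,\eqsim_{n,\nu}\, F_\delta(s),\qquad s^2 F_\delta''(s)\,\eqsim_{n,\nu}\, F_\delta(s),\qquad s^3 |F_\delta'''(s)|\,\lesssim_{n,\nu}\, F_\delta(s).
\end{equation*}
The two-sided bound on $F_\delta'$ is immediate from \eqref{Eq_Fd} and \eqref{Eq34} monomial-by-monomial, and the corresponding bound on $F_\delta''$ follows from \eqref{Eq33}. The decisive use of condition \eqref{Eq106} appears in the \emph{lower} bound on $F_\delta''$: the three monomials $\delta^{2l}\nu(\nu-2) s^n$, $-\delta^l(\nu^2+\nu(2-4n)+n(n+2)) s^{(n+\nu)/2}$ and $n(n-2) s^\nu$ in the numerator of \eqref{Eq33} all have non-negative coefficients under \eqref{Eq106}, so the numerator is comparable to its $s^n$- plus $s^\nu$-summands without cancellation. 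For $|F_\delta'''|$ I would exploit the scaling $F_\delta(r) = \delta^{n/2} F_1(r/\delta)$ to reduce the desired inequality to boundedness of $t^3 F_1'''(t)/F_1(t)$ on $(0,\infty)$; this ratio is continuous and tends to the finite limits $(n/2)(n/2-1)(n/2-2)$ as $t\to\infty$ and $(\nu/2)(\nu/2-1)(\nu/2-2)$ as $t\to 0$, hence is uniformly bounded.

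\textbf{Step 2: Lifting to $F_{\delta,\epsilon}$.} Using $F_{\delta,\epsilon}=F_\delta\circ K_\epsilon$ together with $|K_\epsilon'|\le 1$, $K_\epsilon''\ge 0$, $|K_\epsilon'''|\lesssim K_\epsilon^{-2}$, and the identity $(K_\epsilon'(r))^2 + K_\epsilon(r) K_\epsilon''(r) = 1$ (obtained by differentiating $K_\epsilon^2 = r^2+\epsilon^2$ twice and dividing by $K_\epsilon^2$), Step 1 lifts to
\begin{equation*}
|F_{\delta,\epsilon}'(r)|\,\lesssim_{n,\nu}\,\tfrac{F_{\delta,\epsilon}(r)}{K_\epsilon(r)},\qquad F_{\delta,\epsilon}''(r)\,\eqsim_{n,\nu}\,\tfrac{F_{\delta,\epsilon}(r)}{K_\epsilon(r)^2},\qquad |F_{\delta,\epsilon}'''(r)|\,\lesssim_{n,\nu}\,\tfrac{F_{\delta,\epsilon}(r)}{K_\epsilon(r)^3}.
\end{equation*}
The lower bound on $F_{\delta,\epsilon}''$ is where the identity $(K_\epsilon')^2 + K_\epsilon K_\epsilon''=1$ is essential: decomposing $F_{\delta,\epsilon}''(r)=F_\delta''(K_\epsilon)(K_\epsilon')^2 + F_\delta'(K_\epsilon)K_\epsilon''$ and applying Step 1 bounds the two summands below by $c_1 F_{\delta,\epsilon}/K_\epsilon^2\cdot(K_\epsilon')^2$ and $c_2 F_{\delta,\epsilon}/K_\epsilon^2\cdot K_\epsilon K_\epsilon''$ respectively, whose sum is at least $\min(c_1,c_2) F_{\delta,\epsilon}/K_\epsilon^2$.

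\textbf{Step 3: Deducing \eqref{Eq53}--\eqref{Eq58}.} With Step 2 in hand, each of the four inequalities becomes a matter of matching exponents. For \eqref{Eq57} and \eqref{Eq58}: $F_{\delta,\epsilon}F_{\delta,\epsilon}''\eqsim_{n,\nu}F_{\delta,\epsilon}^2/K_\epsilon^2$, while $(F_{\delta,\epsilon}')^2\lesssim F_{\delta,\epsilon}^2/K_\epsilon^2$, and $|(F_{\delta,\epsilon}^2)''|=2(F_{\delta,\epsilon}')^2+2F_{\delta,\epsilon}F_{\delta,\epsilon}''$ with both summands non-negative by the convexity of $F_{\delta,\epsilon}$. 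For \eqref{Eq53} and \eqref{Eq54}: $F_{\delta,\epsilon}^{1/2}(F_{\delta,\epsilon}'')^{3/2}\eqsim_{n,\nu}F_{\delta,\epsilon}^2/K_\epsilon^3$, while $|((F_{\delta,\epsilon}')^2)'|=2|F_{\delta,\epsilon}'|F_{\delta,\epsilon}''\lesssim F_{\delta,\epsilon}^2/K_\epsilon^3$ and $|(F_{\delta,\epsilon}^2)'''|\le 6|F_{\delta,\epsilon}'|F_{\delta,\epsilon}''+2F_{\delta,\epsilon}|F_{\delta,\epsilon}'''|\lesssim F_{\delta,\epsilon}^2/K_\epsilon^3$.

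The hard part will be the uniform-in-$(\delta,\epsilon)$ lower bound on $F_{\delta,\epsilon}''$. Without it, only the one-sided estimates with $F_{\delta,\epsilon}''$ on the left could be obtained directly from the chain rule; however, \eqref{Eq53} and \eqref{Eq54} carry the three-halves power of $F_{\delta,\epsilon}''$ on the \emph{right}, so a genuine lower bound is needed to trade it for the scale $F_{\delta,\epsilon}/K_\epsilon^2$. This is precisely the role played by condition \eqref{Eq106} in Step 1 and of the identity $(K_\epsilon')^2+K_\epsilon K_\epsilon''=1$ in Step 2.
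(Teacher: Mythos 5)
Your proof is correct, but it takes a genuinely different route from the paper's. Both hinge on condition \eqref{Eq106} to guarantee that the numerator of $F_\delta''$ in \eqref{Eq33} has non-negative coefficients, which is what gives the decisive lower bound. From there, the two arguments diverge. The paper does not establish your two-sided comparison $F_{\delta,\epsilon}''\eqsim F_{\delta,\epsilon}/K_\epsilon^2$; it proves only the scalar inequality $(F_\delta')^2\lesssim F_\delta F_\delta''$ (its \eqref{Eq51}) and then, for \eqref{Eq53} and \eqref{Eq57}, exploits the exact chain-rule factorization $((F_{\delta,\epsilon}')^2)'=2F_\delta'(K_\epsilon)K_\epsilon'\,F_{\delta,\epsilon}''$ together with the cheap one-sided inequality $F_\delta''(K_\epsilon)(K_\epsilon')^2\le F_{\delta,\epsilon}''$; for \eqref{Eq54} it decomposes $(F_{\delta,\epsilon}^2)'''$ into five explicit terms $T_1,\dots,T_5$, bounds $T_2,T_3,T_5$ by $|T_1|$ or $|T_4|$ via the explicit (rather long) formula for $F_\delta'''$, and uses a sign observation ($\sgn T_1=\sgn T_4$) to avoid any lower bound on $F_{\delta,\epsilon}''$. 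Your approach replaces all of this with the unified statement $|F_{\delta,\epsilon}^{(k)}|\lesssim F_{\delta,\epsilon}/K_\epsilon^k$ for $k=1,2,3$ plus the matching lower bound at $k=2$, so that every inequality in the lemma reduces to comparing the single quantity $F_{\delta,\epsilon}^2/K_\epsilon^j$; the price is that you genuinely need the lower bound on $F_{\delta,\epsilon}''$, which you obtain slickly from the identity $(K_\epsilon')^2+K_\epsilon K_\epsilon''=1$. Your scaling reduction $F_\delta(r)=\delta^{n/2}F_1(r/\delta)$ for the $F_\delta'''$ bound is a tidy alternative to the paper's explicit computation. Net assessment: the paper's route is more ad hoc but gets away with less (only one-sided estimates on $F_{\delta,\epsilon}''$), while yours is systematically scale-homogeneous and would generalize more easily, at the cost of requiring the two-sided comparability and the extra identity for $K_\epsilon$.
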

\begin{proof}
	Ad \eqref{Eq53}.
	We have that
	\begin{equation}\label{Eq126}(	F_{\delta,\epsilon}'(r))^2\,=\, (F_\delta'(K_\epsilon(r)))^2(K_{\epsilon}'(r))^2
	\end{equation}
	by the chain rule
	and hence
	\begin{align}\begin{split}\label{Eq52}
			&
			((	F_{\delta,\epsilon}')^2)'(r)\,=\, 2 (F_\delta'(K_\epsilon(r)))^2 K_\epsilon'(r) K_\epsilon''(r)\,+\, 2 F_\delta'(K_\epsilon(r)) F_\delta''(K_\epsilon(r)) (K_\epsilon'(r))^3
			\\&\quad=\, 2F_\delta'(K_\epsilon(r)) K_\epsilon'(r)
			F_{\delta,\epsilon}''(r)
		\end{split}
	\end{align}
	by \eqref{Eq39}.
	Next, we convince ourselves that 
	\begin{equation}\label{Eq51}
		|F_\delta'(r)|^2\, \lesssim_{n,\nu}\,
		F_{\delta}(r) F_\delta''(r), \quad r>0.
	\end{equation}
	Using \eqref{Eq_Fd}, \eqref{Eq34} and \eqref{Eq33} we observe that
	\begin{align}\begin{split}\label{Eq55}
			&
			(F_\delta'(r))^2 \,=\,
			r^{{n+\nu}-2}\tfrac{(nr^\frac{\nu}{2}\,+\, \delta^{l}  \nu r^\frac{n}{2})^2}{4(r^\frac{\nu}{2}+\delta^l  r^\frac{n}{2})^4}\,\lesssim\, 
			r^{{n+\nu}-2}\tfrac{n^2r^\nu\,+\, \delta^{2l}  \nu^2 r^n}{(r^\frac{\nu}{2}+\delta^l  r^\frac{n}{2})^4}, \quad r>0,
			\\& 
			F_\delta(r)F_\delta''(r)\,=\, r^{{n+\nu}-2}\tfrac{\delta^{2l}(\nu-2)\nu r^n \,-\, \delta^l  (\nu^2+\nu(2-4n) +n(n+2)) r^\frac{n+\nu}{2}\,+\, n(n-2)r^\nu}{4(r^\frac{\nu}{2}+\delta^l  r^\frac{n}{2})^4}, \quad r>0,
		\end{split}
	\end{align}
	{\cO and hence} it suffices to show that
	\begin{equation}\label{Eq56}n^2r^\nu\,+\, \delta^{2l}  \nu^2 r^n
		\,\lesssim_{n,\nu}\, \delta^{2l}(\nu-2)\nu r^n \,-\, \delta^l  (\nu^2+\nu(2-4n) +n(n+2)) r^\frac{n+\nu}{2}\,+\, n(n-2)r^\nu, \quad r>0.
	\end{equation}
	However, this follows from the assumption \eqref{Eq106}, {\cO so} that \eqref{Eq51} indeed holds. Together with  \eqref{Eq52} we proceed to estimate
	\begin{align}
		&
		|((	F_{\delta,\epsilon}')^2)'(r)|\, \lesssim_{n,\nu}
		\,(F_{\delta}(K_\epsilon(r)) F_\delta''(K_\epsilon(r)))^\frac{1}{2}  K_\epsilon'(r)
		F_{\delta,\epsilon}''(r)
		\\&\quad =\, (F_{\delta,\epsilon}(r) )^\frac{1}{2}  
		F_{\delta,\epsilon}''(r) \bigl(F_{\delta}''(K_\epsilon(r))(K_\epsilon'(r))^2\bigr)^\frac{1}{2}.
	\end{align}
	By combining \eqref{Eq34}, \eqref{Eq39} and \eqref{Eq45}, we see that
	\begin{equation}\label{Eq128}F_{\delta}''(K_\epsilon(r))(K_\epsilon'(r))^2\,
		\le \,
		F_{\delta,\epsilon}''(r)
	\end{equation}
	and \eqref{Eq53} follows.
	
	Ad \eqref{Eq54}. An algebraic computation shows that
	\begin{align}&
		(F_{\delta,\epsilon}^2)'''(r)\,=\,
		(F_{\delta}^2)'''(K_\epsilon(r))(K_\epsilon'(r))^3\,+\, 3 (F_{\delta}^2)''(K_\epsilon(r))K_\epsilon'(r)K_\epsilon''(r)\,+\, (F_{\delta}^2)'(K_\epsilon(r))K_\epsilon'''(r)
		\\&\quad
		=\, 6 (F_\delta'F_\delta'')(K_\epsilon(r))(K_\epsilon'(r))^3
		\,+\, 
		2(F_\delta F_\delta''')(K_\epsilon(r)) (K_\epsilon'(r))^3
		\,+\, 6 (F_\delta F_\delta'') (K_\epsilon(r)) K_\epsilon'(r)K_\epsilon''(r)
		\\&\qquad+\, 6 ( F_\delta')^2 (K_\epsilon(r)) K_\epsilon'(r)K_\epsilon''(r)
		\,+\, 2(F_\delta'F_\delta)(K_\epsilon(r)) K_\epsilon'''(r)
		\\&\quad
		=\, T_1\,+\,\dots\,+\,  T_5.
	\end{align}
	We notice that
	\begin{equation}\label{Eq119}
		|T_1\,+\, T_4|\,=\, 3| ((	F_{\delta,\epsilon}')^2)'(r)|
		\, 	\lesssim_{n,\nu} \, (F_{\delta,\epsilon}(r))^\frac{1}{2} (F_{\delta,\epsilon}''(r))^\frac{3}{2}
	\end{equation}
	by \eqref{Eq53} and \eqref{Eq52}. For $T_3$, we use
	\eqref{Eq55} to conclude that
	\[
	|F_\delta(r)F_\delta''(r)|\,\lesssim_{n,\nu}\,
	(F_\delta'(r))^2, \quad r>0
	\]
	and thus $|T_3|\lesssim_{n,\nu} |T_4|$. By \eqref{Eq34}, \eqref{Eq33}, \eqref{Eq44}, \eqref{Eq45} and \eqref{Eq56}, we have that $\sgn(T_1) =\sgn(T_4) = \sgn(r)$ {\cO and therefore}
	\begin{equation}\label{Eq122}
		|T_1| \,+\, |T_4|\,=\, |T_1+T_4|,
	\end{equation}
	which was estimated appropriately in \eqref{Eq119}. For $T_2$, we use that
	\begin{align}&
		F_{\delta}'''(r)\,=\, \tfrac{r^{\frac{n+\nu}{2}-3}}{8(r^\frac{\nu}{2}+\delta^l r^\frac{n}{2})^4} \times \Bigl( 
		\delta^{3l} \nu(\nu^2-6\nu+8)r^\frac{3n}{2}\\&\qquad-\, \delta^{2l}(4\nu^3+n(-12\nu^2-n^2 -6n -8) +2\nu(3n^2 +12n-8) )r^\frac{2n+\nu}{2}
		\\&\qquad+\,\delta^l (\nu^3-6\nu^2(n-1)+4\nu(3n^2-6n+2) -4n(n^2-4))r^\frac{n+2\nu}{2} \\&\qquad+\, n(n^2-6n+8)r^\frac{3\nu}{2} \Bigr)
	\end{align}
	{\cO so} that
	\[
	|F_\delta(r)F_\delta'''(r)|
	\,\lesssim_{n,\nu}
	\,r^{{n+\nu}-3} \tfrac{\delta^{3l}r^\frac{3n}{2} \,+\, r^\frac{3\nu}{2}}{(r^\frac{\nu}{2}+\delta^l r^\frac{n}{2})^5},\quad r>0,
	\]
	by Young's inequality. Using \eqref{Eq34},\eqref{Eq33} and \eqref{Eq56} we observe that 
	\begin{align}&F_\delta'(r)F_\delta''(r)\, \gtrsim_{n,\nu}\,
		r^{n+\nu-3}\tfrac{(nr^\frac{\nu}{2}\,+\, \delta^l  \nu r^\frac{n}{2})(n^2r^\nu\,+\, \delta^{2l}  \nu^2 r^n)}{8(r^\frac{\nu}{2} +\delta^l r^\frac{n}{2})^5}\\&\quad \gtrsim_{n,\nu}\,		
		\,r^{{n+\nu}-3} \tfrac{\delta^{3l}r^\frac{3n}{2} \,+\, r^\frac{3\nu}{2}}{(r^\frac{\nu}{2}+\delta^l r^\frac{n}{2})^5},\quad r>0.
	\end{align}
	The bound $|T_2|\lesssim_{n,\nu} |T_1|$ follows, which again is bounded by \eqref{Eq119} and \eqref{Eq122}. It is left to bound $T_5$ and to this end we compute 
	\begin{align}
		K_\epsilon'''(r)\,=\, \frac{-3\epsilon^2 r }{(\epsilon^2+r^2)^\frac{5}{2}}\,=\, -3 \frac{K_\epsilon''(r)K_\epsilon'(r)}{K_\epsilon(r)}
	\end{align}
	using \eqref{Eq44} and \eqref{Eq45}. Hence,
	\[
	|T_5|\,=\, |2(F_\delta'F_\delta)(K_\epsilon(r)) K_\epsilon'''(r)|\,
	\lesssim\,
	\Bigl(\frac{F_\delta'F_\delta}{(\cdot)}\Bigr)(K_\epsilon(r)) K_\epsilon''(r)|K_\epsilon'(r)|
	\]
	and we obtain the estimate $|T_5|\lesssim |T_4|$ as soon as we can show that
	\[
	\frac{F_{\delta}(r)}{r}\,\lesssim_{n,\nu} \, F_{\delta}'(r),\quad r>0.
	\]
	But this follows by
	\begin{align}
		\frac{F_{\delta}(r)}{rF_{\delta}'(r)}\,\overset{\eqref{Eq_Fd}, \eqref{Eq34}}{=} \, 
		\tfrac{2(r^\frac{\nu}{2}+ \delta^l r^\frac{n}{2})^2}{(r^\frac{\nu}{2}+ \delta^l r^\frac{n}{2})(nr^\frac{\nu}{2}+\delta^l \nu r^\frac{n}{2})}\,\lesssim_{n,\nu}\, 1,\quad r>0,
	\end{align}
	{\cO so} that the \eqref{Eq54} is a consequence of \eqref{Eq119} and \eqref{Eq122}.
	
	Ad \eqref{Eq57}. From \eqref{Eq126} and \eqref{Eq51}, we conclude that
	\[
	(F_{\delta,\epsilon}'(r))^2\,\lesssim_{n,\nu}\,
	F_\delta(K_\epsilon(r))F_\delta''(K_\epsilon(r))(K_\epsilon'(r))^2
	\]
	Inserting \eqref{Eq128}, we obtain
	\[
	(F_{\delta,\epsilon}'(r))^2\,\lesssim_{n,\nu}\, F_{\delta,\epsilon}(r)F_{\delta,\epsilon}''(r)
	\]
	as desired.
	
	Ad \eqref{Eq58}. We have that
	\[
	(F_{\delta,\epsilon}^2)''(r)\,=\, 2(F_{\delta,\epsilon}'(r))^2\,+\, 2F_{\delta,\epsilon}(r)F_{\delta,\epsilon}''(r),
	\]
	{\cO and thus} \eqref{Eq58} is a consequence of \eqref{Eq57}.
\end{proof}
{\cO After having provided the technical parts of the $\delta$-uniform $\log$-entropy and energy estimate, we next prove the ingredients of the $(R,\epsilon,\delta)$-uniform entropy estimate Lemma \ref{Lemma_Entropy_Est}. They can be seen as generalizations of \cite[Lemmas 4.1, 4.2]{dareiotis2021nonnegative} to the case of an inhomogeneous mobility function.}

\begin{lemma}\label{Lemma_Hde}
	It holds that
	\begin{equation}\label{Eq149}
		H_{\delta,\epsilon}^2(r)\,\lesssim_{n,\nu}\, G_{\delta,\epsilon}(r).
	\end{equation}
\end{lemma}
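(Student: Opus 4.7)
The plan is to derive matching scaling estimates $H_{\delta,\epsilon}(r)\lesssim_n r/F_{\delta,\epsilon}(r)$ and $G_{\delta,\epsilon}(r)\gtrsim_\nu r^2/F_{\delta,\epsilon}^2(r)$ for $r\ge \epsilon$, and then to reduce the remaining range $r<\epsilon$ to a bounded neighbourhood of the origin, where $F_{\delta,\epsilon}$ is comparable to $F_{\delta}(\epsilon)$. Squaring the upper bound on $H_{\delta,\epsilon}$ and comparing with the lower bound on $G_{\delta,\epsilon}$ will then give \eqref{Eq149}.

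First, I would rewrite $G_{\delta,\epsilon}$ by Fubini as
\[
G_{\delta,\epsilon}(r)\,=\, \int_r^\infty \frac{r''-r}{F_{\delta,\epsilon}^2(r'')}\, dr''.
\]
The key observation is that the defining formula \eqref{Eq_Fd} gives
\[
\frac{F_\delta(r)}{r^{n/2}}\,=\, \frac{1}{1+\delta^l r^{(n-\nu)/2}},\qquad \frac{F_\delta(r)}{r^{\nu/2}}\,=\, \frac{1}{r^{(\nu-n)/2}+\delta^l},\qquad r>0,
\]
which are respectively nondecreasing and nonincreasing on $(0,\infty)$ because $\nu>n$. Hence for $0<r\le s$,
\[
\Bigl(\frac{s}{r}\Bigr)^{n/2}\,\le\, \frac{F_\delta(s)}{F_\delta(r)}\,\le\, \Bigl(\frac{s}{r}\Bigr)^{\nu/2},
\]
and the same two-sided scaling, up to a universal constant depending only on $n$ and $\nu$, transfers to $F_{\delta,\epsilon}=F_\delta\circ K_\epsilon$ whenever $r\ge \epsilon$, since then $K_\epsilon(r)\in [r,2r]$ by \eqref{Eq145} and $K_\epsilon(r')\in [r',r'+\epsilon]\subseteq [r',2r']$ for any $r'\ge \epsilon$.

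Substituting these bounds into the definitions of $H_{\delta,\epsilon}$ and $G_{\delta,\epsilon}$ and integrating the resulting power functions gives, for $r\ge \epsilon$,
\[
H_{\delta,\epsilon}(r)\,\lesssim_n\, \frac{1}{F_{\delta,\epsilon}(r)}\int_r^\infty \Bigl(\frac{r}{r'}\Bigr)^{n/2}\,dr'\,\eqsim_n\, \frac{r}{F_{\delta,\epsilon}(r)}
\]
(valid since $n>2$) and
\[
G_{\delta,\epsilon}(r)\,\gtrsim_\nu\, \frac{1}{F_{\delta,\epsilon}^2(r)}\int_r^\infty (r'-r)\Bigl(\frac{r}{r'}\Bigr)^{\nu}\,dr'\,\eqsim_\nu\, \frac{r^2}{F_{\delta,\epsilon}^2(r)}
\]
(valid since $\nu>2$). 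Squaring the first and combining proves \eqref{Eq149} on $[\epsilon,\infty)$.

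For $r\in (-\infty,\epsilon)$ the scaling argument no longer applies, but on any neighbourhood of $0$ of bounded size the function $F_{\delta,\epsilon}$ stays comparable to $F_\delta(\epsilon)$ by \eqref{Eq145}. I would split the defining integrals at $r=\epsilon$ (and additionally at $r=0$ when $r<0$), estimate the compact piece near the origin directly in terms of $F_\delta(\epsilon)$, and combine the result with the bound already established on $[\epsilon,\infty)$. I expect the main obstacle to be bookkeeping for $r<-\epsilon$, where the interval of integration over negative values of $r'$ is long; there one has to verify by a computation analogous to the positive case, using that $F_{\delta,\epsilon}(r')\eqsim |r'|^{n/2}$ once $|r'|\ge \epsilon$, that the contribution of this region to $G_{\delta,\epsilon}(r)$ grows sufficiently fast in $|r|$ to dominate $H_{\delta,\epsilon}^2(r)$, which remains uniformly bounded in $|r|$ thanks to $n>2$.
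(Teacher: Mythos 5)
Your proof is essentially correct in its main thrust, but it takes a genuinely different route from the paper's, and the treatment of the negative range is left more sketchy than necessary.

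The paper's argument is more direct: it uses the explicit additive decomposition
\[
\frac{1}{F_\delta(r)}\,=\, r^{-n/2}\,+\,\delta^l r^{-\nu/2},
\]
which follows immediately from \eqref{Eq_Fd}, together with \eqref{Eq145}. Integrating each power separately yields $H_{\delta,\epsilon}(r)\lesssim_{n,\nu}(r+\epsilon)^{1-n/2}+\delta^l(r+\epsilon)^{1-\nu/2}$ for $r\ge 0$, while the pointwise bounds $F_\delta(r)\le r^{n/2}$ and $F_\delta(r)\le\delta^{-l}r^{\nu/2}$ from \eqref{Eq146}--\eqref{Eq147} give $G_{\delta,\epsilon}(r)\gtrsim_n (r+\epsilon)^{2-n}$ and $G_{\delta,\epsilon}(r)\gtrsim_\nu\delta^{2l}(r+\epsilon)^{2-\nu}$. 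Squaring the first and expanding gives \eqref{Eq149} on $[0,\infty)$ without any case split at $r=\epsilon$. Your approach instead proves the Potter-type bounds $(s/r)^{n/2}\le F_\delta(s)/F_\delta(r)\le(s/r)^{\nu/2}$ and integrates ratios; this is a valid and somewhat more conceptual route, showing the stronger (and unused) pointwise estimates $H_{\delta,\epsilon}(r)\lesssim_n r/F_{\delta,\epsilon}(r)$ and $G_{\delta,\epsilon}(r)\gtrsim_\nu r^2/F_{\delta,\epsilon}^2(r)$ for $r\ge\epsilon$, at the cost of having to patch the regions $[0,\epsilon)$ and $(-\infty,0)$ separately because $K_\epsilon$ only scales like the identity when $|r|\ge\epsilon$.

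For $r<0$ your plan would work after some bookkeeping, but you have missed the shortcut the paper uses: since $F_{\delta,\epsilon}$ is even, one immediately gets $H_{\delta,\epsilon}(r)\le 2H_{\delta,\epsilon}(0)$ for all $r<0$, and since $G_{\delta,\epsilon}$ is non-increasing, $G_{\delta,\epsilon}(r)\ge G_{\delta,\epsilon}(0)$; combining with the $r\ge 0$ case applied at $r=0$ yields \eqref{Eq149} for all $r<0$ in one line. There is no need to quantify how $G_{\delta,\epsilon}(r)$ grows as $r\to-\infty$ or to integrate over the long interval of negative $r'$, so the ``bookkeeping'' you anticipated is avoidable.
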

\begin{proof}
	We use that $F_\delta$ is increasing on $(0,\infty)$ together with \eqref{Eq145}
	to conclude that
	\begin{align}\begin{split}\label{Eq148}
			&
			H_{\delta,\epsilon}(r)\,\overset{\eqref{Eq_Hde},\eqref{Eq145}}{\le} \, \int_r^\infty \frac{1}{F_\delta\bigl(\tfrac{1}{\sqrt{2}}(r'+\epsilon)\bigr)}\, dr'
			\,\overset{\eqref{Eq_Fd}}{=}\, 
			\int_r^\infty 2^\frac{n}{4}(r'+\epsilon)^{-\frac{n}{2}}\,+\,  2^\frac{\nu}{4}\delta^l (r'+\epsilon)^{-\frac{\nu}{2}}\, dr'
			\\&\quad
			\eqsim_{n,\nu} (r+\epsilon)^{1-\frac{n}{2}}
			\,+\, \delta^l (r+\epsilon)^{1-\frac{\nu}{2}}, \quad r\ge 0.
		\end{split}
	\end{align}
	Next, we use that
	\begin{align}&\label{Eq146}
		F_\delta(r)\,\le \, r^\frac{n}{2},\quad r> 0,\\&
		\label{Eq147}
		F_\delta(r)\, \le \, \delta^{-l}r^\frac{\nu}{2}, \quad r> 0{\cO,}
	\end{align}
	by \eqref{Eq_Fd} to derive
	\begin{align}\begin{split}
			\label{Eq154}&
			G_{\delta,\epsilon}(r)\,\overset{\eqref{Eq_Gde}, \eqref{Eq145}}{\ge} \, \int_r^\infty \int_{r'}^\infty \frac{1}{F_\delta^2(r''+\epsilon)}\, dr''\, dr'
			\\&\quad \overset{\eqref{Eq146}}{\ge} \, \int_r^\infty \int_{r'}^\infty \frac{1}{(r''+\epsilon)^n}\, dr''\, dr'\, \eqsim_{n}\, (r+\epsilon)^{2-n}, \quad r\ge 0{\cO,}
		\end{split}
	\end{align}
	and
	\begin{equation}
		G_{\delta,\epsilon}(r)\, \gtrsim_{\nu}\, \delta^{2l}(r+\epsilon)^{2-\nu}, \quad r\ge 0
	\end{equation}
	using \eqref{Eq147} instead. Inserting these estimates in \eqref{Eq148}, we arrive at \eqref{Eq149} for $r\ge 0$. For $r<0$, we use that $F_{\delta,\epsilon}$ is an even function together with $G_{\delta,\epsilon}$ being decreasing to conclude that
	\[
	H_{\delta,\epsilon}^2(r)\, \overset{\eqref{Eq_Hde}}{\le} \, (2H_{\delta,\epsilon}(0))^2\,
	\lesssim_{n,\nu} \, G_{\delta,\epsilon}(0)\, \le \, G_{\delta,\epsilon}(r) 
	,\quad r<0,
	\]
	finishing the proof.
\end{proof}
\begin{lemma}\label{Lemma_logFde}
	It holds that
	\begin{equation}\label{Eq156}
		|\log(F_{\delta,\epsilon}(r))|\, \lesssim_{{\cO n,\nu}} \, G_{\delta,\epsilon}(r)\,+\, |r|\,+\, 1.
	\end{equation}
\end{lemma}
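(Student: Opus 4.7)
The plan is to first reduce the claim to a bound involving only $|\log K_\epsilon(r)|$, and then to split according to whether $K_\epsilon(r) \ge 1$ or $K_\epsilon(r) < 1$, absorbing the latter into $G_{\delta,\epsilon}(r)$ via the lower bound already established in the proof of Lemma \ref{Lemma_Hde}.

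First, I would use the explicit form \eqref{Eq_Fd} of $F_\delta$ together with $F_{\delta,\epsilon}(r) = F_\delta(K_\epsilon(r))$ and analyze three regimes: (a) $K_\epsilon(r) \ge 1$, (b) $\delta \le K_\epsilon(r) < 1$, and (c) $K_\epsilon(r) < \delta$. In regimes (a) and (b) the condition $K_\epsilon(r)^{(\nu-n)/2} \ge \delta^l$ forces $K_\epsilon(r)^{\nu/2}$ to dominate $\delta^l K_\epsilon(r)^{n/2}$ up to a factor of two in the denominator $K_\epsilon(r)^{\nu/2} + \delta^l K_\epsilon(r)^{n/2}$, so $F_{\delta,\epsilon}(r) \eqsim K_\epsilon(r)^{n/2}$ and hence $|\log F_{\delta,\epsilon}(r)| \lesssim_n |\log K_\epsilon(r)| + 1$. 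In regime (c), $\delta^l K_\epsilon(r)^{n/2}$ instead dominates, so $F_{\delta,\epsilon}(r) \eqsim \delta^{-l} K_\epsilon(r)^{\nu/2}$; since $K_\epsilon(r) < \delta < 1$ forces $\log(1/\delta) \le \log(1/K_\epsilon(r))$, the same conclusion $|\log F_{\delta,\epsilon}(r)| \lesssim_{n,\nu} |\log K_\epsilon(r)| + 1$ results.

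Next I would estimate $|\log K_\epsilon(r)|$ in two sub-cases. If $K_\epsilon(r) \ge 1$, then $|\log K_\epsilon(r)| = \log K_\epsilon(r) \le \log(|r|+\epsilon) \le \log(|r|+1) \le |r|$. If instead $K_\epsilon(r) < 1$, I would invoke the elementary inequality $\log(1/x) \le \tfrac{1}{n-2}\, x^{2-n}$, valid for $x \in (0,1]$ since $n > 2$, to get $|\log K_\epsilon(r)| \lesssim_n K_\epsilon(r)^{2-n}$.

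Finally, to absorb $K_\epsilon(r)^{2-n}$ into $G_{\delta,\epsilon}(r)$ for $r \ge 0$, I would reuse the lower bound $G_{\delta,\epsilon}(r) \gtrsim_n (r+\epsilon)^{2-n}$ already derived as \eqref{Eq154} inside the proof of Lemma \ref{Lemma_Hde}, together with $r + \epsilon \le \sqrt{2}\, K_\epsilon(r)$ and $2 - n < 0$, which give $G_{\delta,\epsilon}(r) \gtrsim_n K_\epsilon(r)^{2-n}$. For $r < 0$, the evenness of $F_{\delta,\epsilon}$ makes $G_{\delta,\epsilon}$ decreasing on $\RR$, so $G_{\delta,\epsilon}(r) \ge G_{\delta,\epsilon}(|r|) \gtrsim_n K_\epsilon(|r|)^{2-n} = K_\epsilon(r)^{2-n}$. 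Combining the three steps yields \eqref{Eq156}. The only mildly delicate point is the three-regime analysis of the denominator in the first step, but this is essentially mechanical once the pivot values $K_\epsilon(r) = \delta$ and $K_\epsilon(r) = 1$ are identified, so I do not anticipate any serious obstacle.
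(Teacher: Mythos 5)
Your proposal is correct and follows essentially the same approach as the paper: reduce to bounds on $|\log F_{\delta,\epsilon}(r)|$ in terms of $|\log K_\epsilon(r)|$ (the paper uses $|\log(r+\epsilon)|$, equivalent via \eqref{Eq145}) by splitting at $K_\epsilon(r)=\delta$, then absorb the log via $G_{\delta,\epsilon}(r) \gtrsim (r+\epsilon)^{2-n}$. The only cosmetic difference is that you split the log bound into two explicit sub-cases ($K_\epsilon(r)\gtrless 1$) while the paper writes a single combined inequality $|\log(r+\epsilon)|\lesssim_n (r+\epsilon)^{2-n}+(r+\epsilon)+1$, and your third regime (a) collapses into (b) anyway.
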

\begin{proof}
	We distinguish several cases of $r$. Firstly, we assume that $r\ge 0$ and $K_\epsilon(r)\ge \delta${\cO, in which case } we deduce from \eqref{Eq145}, \eqref{Eq46}  and {\cO\eqref{Eq146}} that 
	\begin{align}
		\tfrac{1}{{\cO 2^\frac{n+4}{4}}}(r+\epsilon)^\frac{n}{2}\,\le \, \tfrac{1}{2}K_\epsilon^\frac{n}{2}(r)\, \le \, 
		F_{\delta,\epsilon}(r)\,\le \, K_\epsilon^{\frac{n}{2}}(r) \, \le \, (r+\epsilon)^\frac{n}{2}
		, \quad  r\ge 0\,\wedge\,  K_\epsilon(r)\ge \delta.
	\end{align}
	Thus, we have
	\begin{align}
		\tfrac{n}{2}\log(r+\epsilon) \,-\, {\cO\tfrac{n+4}{4}}\log(2) \, \le \,\log(F_{\delta,\epsilon}(r))\, \le \,  	\tfrac{n}{2}\log(r+\epsilon) 
		, \quad  r\ge 0\,\wedge\,  K_\epsilon(r)\ge \delta
	\end{align}
	and therefore
	\begin{align}\label{Eq153}
		|\log(F_{\delta,\epsilon}(r))|\, \lesssim_n \,|\log(r+\epsilon)| \,+\,1, \quad  r\ge 0\,\wedge\,  K_\epsilon(r)\ge \delta.
	\end{align}
	
	Next, we consider the case that $r\ge 0$ and $K_\epsilon(r)< \delta$ and use 
	\eqref{Eq152}, \eqref{Eq145}, \eqref{Eq47} and {\cO\eqref{Eq146}} to estimate
	\begin{align}\tfrac{1}{{\cO 2^\frac{\nu+4}{4}}}(r+\epsilon)^\frac{\nu}{2}\, \le \, \tfrac{1}{2\delta^l}K_\epsilon^\frac{\nu}{2}(r)\, \le \, F_{\delta,\epsilon}(r)\, \le \,  K_\epsilon^\frac{n}{2}(r)\, \le \,1
		,\quad  r\ge 0\,\wedge\,  K_\epsilon(r)< \delta
	\end{align}
	Hence,
	\begin{align}
		\tfrac{\nu}{2} \log(r+\epsilon)\,-\,{\cO\tfrac{\nu+4}{4}}\log(2)
		\,\le\, 
		\log(F_{\delta,\epsilon}(r))\,\le \, 0 ,\quad  r\ge 0\,\wedge\,  K_\epsilon(r)< \delta.
	\end{align}
	Combining this with \eqref{Eq152}, \eqref{Eq154} and \eqref{Eq153}, we obtain
	\begin{align}&
		|\log(F_{\delta,\epsilon}(r))|\, \lesssim_{n,\nu}\,|\log(r+\epsilon)|\,+\, 1
		\\&\lesssim_n \, (r+\epsilon)^{2-n}\,+\, (r+\epsilon)\,+\, 1\,
		\lesssim_n\, G_{\delta,\epsilon}(r) \,+\, r\,+\, 1
		,\quad r\ge 0.
	\end{align}
	For $r<0$, \eqref{Eq156} follows as well, because $F_{\delta,\epsilon}$ is even and $G_{\delta,\epsilon}$ is decreasing.
\end{proof}
{\cO Finally, we provide the technical ingredients  of the $(R,\epsilon)$-uniform energy estimate Lemma \ref{Lemma_EEeps}, which is proved in Section \ref{Sec_remaining_proofs}. They can be seen as generalizations of \cite[Lemmas 4.4, 4.5]{dareiotis2021nonnegative} to the case of an inhomogeneous mobility function.}
	\begin{lemma}\label{LE_1} It holds that
	\begin{align}&\label{Eq161}
		\int_{-\infty}^{\infty} (F_{\delta,\epsilon}''(r))^2\, dr \,\lesssim_{\delta,n,\nu}\, 1.	
	\end{align}
\end{lemma}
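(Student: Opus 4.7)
The plan is to reduce the integral to the half-line using the evenness of $F_{\delta,\epsilon}$, split it into a neighborhood of the origin and its complement at the threshold $\delta/2$, and control each piece separately with the two different bounds on $F_{\delta,\epsilon}''$ already available from Section \ref{Sec_approx_functions}. This is essentially the argument carried out inside the proof of Lemma \ref{Lemma_prop_Jd} when verifying \eqref{Eq40}, which I would now state as a standalone lemma.

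More concretely, since $F_{\delta,\epsilon}$ is an even function (as $K_\epsilon$ is even and $F_\delta$ is applied to it), $(F_{\delta,\epsilon}'')^2$ is also even, so
\[
\int_{-\infty}^{\infty}(F_{\delta,\epsilon}''(r))^2\,dr\,\eqsim\,
\int_0^{\delta/2}(F_{\delta,\epsilon}''(r))^2\,dr\,+\,\int_{\delta/2}^\infty(F_{\delta,\epsilon}''(r))^2\,dr.
\]
For the tail piece $r\ge \delta/2$, I would invoke the $\delta$-independent pointwise bound \eqref{Eq123}, namely $(F_{\delta,\epsilon}''(r))^2\lesssim_{n,\nu} K_\epsilon^{n-4}(r)\le r^{n-4}$, and integrate; since $n<3$, the integral evaluates to $\eqsim_n (\delta/2)^{n-3}\lesssim_{\delta,n} 1$.

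For the piece near the origin, I would use the stronger bound $|F_{\delta,\epsilon}''(r)|\lesssim_{\delta,n,\nu}(K_\epsilon(r))^{\nu/2-2}$ from \eqref{Eq133} (obtained via the chain rule from the bounds \eqref{Eq130}, \eqref{Eq131} on $F_\delta'$ and $F_\delta''$ and from \eqref{Eq44}, \eqref{Eq45} on $K_\epsilon$). Squaring gives $(F_{\delta,\epsilon}''(r))^2\lesssim_{\delta,n,\nu}(K_\epsilon(r))^{\nu-4}\le r^{\nu-4}$, which is integrable on $(0,\delta/2]$ since $\nu>3$; the integral evaluates to $\eqsim_\nu(\delta/2)^{\nu-3}\lesssim_{\delta,\nu}1$. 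Adding the two contributions yields \eqref{Eq161}.

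The main obstacle, if any, is simply keeping the case distinction clean: one cannot use the homogeneous bound $r^{n-4}$ near the origin because $n<3$ makes it non-integrable there, which is precisely why the auxiliary regularization $F_\delta$ with $\nu>3$ was introduced. The two scaling regimes of $F_\delta$ (namely $F_\delta(r)\sim\delta^{-l}r^{\nu/2}$ near $0$ and $F_\delta(r)\sim r^{n/2}$ for large $r$) correspond exactly to the two integrability bounds used, and the crossover naturally occurs near $r=\delta$, which motivates the choice of splitting threshold. No new ingredients beyond \eqref{Eq123} and \eqref{Eq133} are needed.
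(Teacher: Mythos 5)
Your proposal is correct and takes essentially the same approach as the paper's own proof of Lemma~\ref{LE_1}, which likewise uses evenness of $F_{\delta,\epsilon}$ and splits the half-line integral to apply the near-origin bound $(F_{\delta,\epsilon}''(r))^2\lesssim_{\delta,n,\nu}K_\epsilon^{\nu-4}(r)$ from \eqref{Eq133} and the tail bound $(F_{\delta,\epsilon}''(r))^2\lesssim_{n,\nu}K_\epsilon^{n-4}(r)$ from \eqref{Eq123}; the only cosmetic difference is that the paper splits at $1$ rather than $\delta/2$, which is immaterial since the implicit constant may depend on $\delta$. You are also right that this is precisely the calculation already carried out to verify \eqref{Eq40} inside the proof of Lemma~\ref{Lemma_prop_Jd}.
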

\begin{proof}
	We use that $F_{\delta,\epsilon}$ is an even function together with \eqref{Eq134} and \eqref{Eq133}, to estimate
	\begin{align}&
		\int_{-\infty}^{\infty} (F_{\delta,\epsilon}''(r))^2\, dr\,=\, 2\int_0^\infty 
		(F_{\delta,\epsilon}''(r))^2
		\, dr
		\\&\quad\lesssim_{\delta,n,\nu}\,
		\int_0^1 K_\epsilon^{\nu-4}(r)\, dr\,+\, 
		\int_1^\infty K_\epsilon^{n-4}(r)\, dr
		\\&\quad
		\le \, \int_0^1 r^{\nu-4}\, dr\,+\, 
		\int_1^\infty r^{n-4}\, dr\,\lesssim_{n,\nu}\, 1.
	\end{align}
\end{proof}
\begin{lemma}\label{LE_2}It holds that
	\begin{align}&\label{Eq19}
		|(F_{\delta,\epsilon}^2)'''(r)|\,\lesssim_{\delta,n,\nu}\, 1,
		\\&\label{Eq20}
		|((F_{\delta,\epsilon}')^2)'(r)|\,\lesssim_{\delta,n,\nu}\, 1.
	\end{align}
\end{lemma}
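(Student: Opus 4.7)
The plan is to combine Lemma \ref{Lemma_F_de_prop}, specifically \eqref{Eq53} and \eqref{Eq54}, which together reduce both inequalities of the lemma to the single pointwise estimate
\[
(F_{\delta,\epsilon}(r))^{\frac{1}{2}}(F_{\delta,\epsilon}''(r))^{\frac{3}{2}}\,\lesssim_{\delta,n,\nu}\, 1,\qquad r\in\RR.
\]
Once this is shown, \eqref{Eq19} and \eqref{Eq20} both follow immediately.

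To verify the displayed estimate I intend to split into two regimes according to whether $K_\epsilon(r)\ge \delta$ or $K_\epsilon(r)<\delta$. In the large regime $K_\epsilon(r)\ge \delta$ I would invoke \eqref{Eq113}, giving $F_{\delta,\epsilon}(r)\le K_\epsilon^{\frac{n}{2}}(r)$, together with \eqref{Eq123}, which yields $|F_{\delta,\epsilon}''(r)|\lesssim_{n,\nu}K_\epsilon^{\frac{n}{2}-2}(r)$. Combining these with the indicated powers collapses the product to $K_\epsilon^{n-3}(r)$. Since $n-3<0$ under the assumption $n\in(2,3)$, this is further bounded by $\delta^{n-3}$.

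In the small regime $K_\epsilon(r)<\delta$ the homogeneous bounds just used would blow up as $K_\epsilon\searrow 0$, so I instead rely on the sharper estimates \eqref{Eq157} and the intermediate step of \eqref{Eq133}, namely $F_{\delta,\epsilon}(r)\le \delta^{-l}K_\epsilon^{\frac{\nu}{2}}(r)$ and $|F_{\delta,\epsilon}''(r)|\lesssim_{n,\nu}\delta^{-l}K_\epsilon^{\frac{\nu}{2}-2}(r)$. Raised to the respective powers the product becomes $\delta^{-2l}K_\epsilon^{\nu-3}(r)$, and since $\nu-3>0$ and $K_\epsilon(r)<\delta$, it is dominated by $\delta^{\nu-3-2l}$. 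The defining relation $2l=\nu-n$ from Section \ref{Sec_approx_functions} then brings this back to $\delta^{n-3}$, matching the bound from the large regime.

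I do not expect a genuine obstacle here: the whole argument is arithmetic with exponents once Lemma \ref{Lemma_F_de_prop} and the pointwise bounds from Section \ref{Sec_approx_functions} are in place. The only subtle feature is the exact cancellation of the $\delta$-powers in the small-argument regime, which is precisely the reason $l$ was chosen so as to make the two asymptotic regimes of $F_\delta$ compatible. Together the two cases provide the uniform bound $\lesssim_{n,\nu}\delta^{n-3}$, which is $\lesssim_{\delta,n,\nu}1$ as $\delta$ is fixed, closing the proof.
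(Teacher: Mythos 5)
Your proof is correct and follows the same route as the paper: both reduce \eqref{Eq19}–\eqref{Eq20} to the pointwise bound $(F_{\delta,\epsilon})^{1/2}(F_{\delta,\epsilon}'')^{3/2}\lesssim_{\delta,n,\nu}1$ via \eqref{Eq53}–\eqref{Eq54}, and then apply the same pair of estimates on $F_{\delta,\epsilon}$ and $F_{\delta,\epsilon}''$ in two regimes. The only cosmetic difference is the threshold for the case split ($K_\epsilon(r)\gtrless\delta$ in your argument versus $|r|\gtrless 1$ in the paper) and your explicit tracking of the cancellation $2l=\nu-n$ to exhibit the uniform constant $\delta^{n-3}$, which is sharper bookkeeping but not needed for the stated inequality.
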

\begin{proof}
	We observe that by \eqref{Eq53} and \eqref{Eq54}, it suffices to show 
	\begin{align}\label{Eq160}
		(F_{\delta,\epsilon}(r))^\frac{1}{2} (F_{\delta,\epsilon}''(r))^\frac{3}{2}\,\lesssim_{\delta,n,\nu}\, 1
	\end{align}
	to conclude \eqref{Eq19} and \eqref{Eq20}. To this end, we combine \eqref{Eq134} and \eqref{Eq113} to conclude that
	\begin{align}
		(F_{\delta,\epsilon}(r))^\frac{1}{2} (F_{\delta,\epsilon}''(r))^\frac{3}{2}\,\lesssim_{n,\nu} \, K_\epsilon^{n-3}(r)\,\le\, |r|^{n-3}\,\le \, 1,\quad |r|\ge 1.
	\end{align}
	On the other hand, \eqref{Eq133} and \eqref{Eq157}   yield that 
	\begin{align}
		(F_{\delta,\epsilon}(r))^\frac{1}{2} (F_{\delta,\epsilon}''(r))^\frac{3}{2}\,\lesssim_{\delta,n,\nu}\, K_\epsilon^{\nu-3}(r)\,\overset{\eqref{Eq152}}{\le } \,(1^2\,+\, 1^2)^\frac{\nu-3}{2}\,\lesssim_{\nu}\,1,\quad |r|< 1. 
	\end{align}
	Consequently, \eqref{Eq160} holds and the proof is complete.
\end{proof}
\begin{lemma}\label{LE_3}It holds that
	\begin{align}&
		|F_{\delta,\epsilon}'(r)|^2\,\lesssim_{n,\nu} \, |r|^{n-2}\,+\,1, \label{Eq21}\\&
		|(F_{\delta,\epsilon}^2)'' (r)|\,\lesssim_{n,\nu} \, |r|^{n-2}\,+\,1, \label{Eq22} \\&
		|(F_{\delta,\epsilon}^2(r))|\,\lesssim_n\, |r|^n\,+\,1. \label{Eq23}
	\end{align}
\end{lemma}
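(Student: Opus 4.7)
The three inequalities are pointwise growth bounds that should follow quickly from the chain rule, the growth estimates on $F_\delta$ already collected in \eqref{Eq110}, \eqref{Eq113} and \eqref{Eq134}, and the elementary bound $K_\epsilon(r)\le |r|+\epsilon\le |r|+1$ provided by \eqref{Eq145} together with the evenness of $K_\epsilon$ in $r$ and $\epsilon<1$ from \eqref{Eq152}. The main point is that all the estimates invoked are uniform in $\delta$ and $\epsilon$, so no $\delta$- or $\epsilon$-dependent constants will appear in the final bounds.

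For \eqref{Eq21} the plan is to differentiate via the chain rule, $F_{\delta,\epsilon}'(r)=F_\delta'(K_\epsilon(r))K_\epsilon'(r)$, apply $|K_\epsilon'|\le 1$ from \eqref{Eq44} and the bound \eqref{Eq110} to obtain $|F_{\delta,\epsilon}'(r)|^2\lesssim_{n,\nu} K_\epsilon^{n-2}(r)$. Since $n-2\in(0,1)$, the subadditivity of $t\mapsto t^{n-2}$ on $[0,\infty)$ then yields $K_\epsilon^{n-2}(r)\le(|r|+1)^{n-2}\le|r|^{n-2}+1$. For \eqref{Eq22} I will use the identity $(F_{\delta,\epsilon}^2)''(r)=2(F_{\delta,\epsilon}'(r))^2+2F_{\delta,\epsilon}(r)F_{\delta,\epsilon}''(r)$. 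The first summand is already handled by \eqref{Eq21}. For the second, combining \eqref{Eq113}, i.e.\ $F_{\delta,\epsilon}(r)\le K_\epsilon^{n/2}(r)$, with \eqref{Eq134}, i.e.\ $|F_{\delta,\epsilon}''(r)|\lesssim_{n,\nu}K_\epsilon^{n/2-2}(r)$, gives $|F_{\delta,\epsilon}(r)F_{\delta,\epsilon}''(r)|\lesssim_{n,\nu}K_\epsilon^{n-2}(r)$, and the same subadditivity argument closes the estimate.

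Finally, \eqref{Eq23} is the most immediate: squaring the first inequality in \eqref{Eq113} gives $F_{\delta,\epsilon}^2(r)\le K_\epsilon^n(r)\le(|r|+\epsilon)^n$, and since $n>2$ the elementary inequality $(a+b)^n\lesssim_n a^n+b^n$ for $a,b\ge 0$ combined with $\epsilon<1$ yields $F_{\delta,\epsilon}^2(r)\lesssim_n|r|^n+1$. I do not anticipate a genuine obstacle here: the entire lemma reduces to composing previously established $\delta$-independent growth bounds on $F_\delta$ and $F_{\delta,\epsilon}$ with the comparison $K_\epsilon(r)\eqsim |r|+\epsilon$, and the only mild subtlety is the sub/superadditivity of $t\mapsto t^{n-2}$ versus $t\mapsto t^n$ caused by the exponent crossing $1$, which is resolved by splitting into the two regimes $n-2\in(0,1)$ and $n\in(2,3)\subset(1,\infty)$ as indicated above.
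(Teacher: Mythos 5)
Your proof is correct, and the final bound $K_\epsilon^{n-2}(r)\lesssim |r|^{n-2}+1$ (and its analogue with exponent $n$) is reached in essentially the same way as in the paper. The one place you diverge is in the intermediate step: the paper first invokes \eqref{Eq57} and \eqref{Eq58} from Lemma \ref{Lemma_F_de_prop} to reduce both \eqref{Eq21} and \eqref{Eq22} to the single product estimate $F_{\delta,\epsilon}(r)F_{\delta,\epsilon}''(r)\lesssim_{n,\nu} K_\epsilon^{n-2}(r)$, whereas you estimate $(F_{\delta,\epsilon}')^2$ directly via the chain rule, $|K_\epsilon'|\le 1$ and the elementary growth bound \eqref{Eq110}, and then handle \eqref{Eq22} by expanding $(F_{\delta,\epsilon}^2)''=2(F_{\delta,\epsilon}')^2+2F_{\delta,\epsilon}F_{\delta,\epsilon}''$. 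Your route is marginally more self-contained: \eqref{Eq57} rests on the non-trivial sign condition \eqref{Eq106} on $\nu$, while \eqref{Eq110} is an immediate consequence of the explicit formula \eqref{Eq34}, so you avoid importing the structural hypothesis for a bound that does not actually need it. The paper's route is more economical within its own flow, since \eqref{Eq57} and \eqref{Eq58} have already been established and are needed elsewhere (Lemma \ref{Lemma_EE}), so reusing them here costs nothing. Both are valid; either derivation could be substituted for the other.
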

\begin{proof}As a consequence of \eqref{Eq57} and \eqref{Eq58}, it suffices to verify
	\begin{equation}
		F_{\delta,\epsilon}(r) F_{\delta,\epsilon}''(r)\,\lesssim_{n,\nu}\, |r|^{n-2}\,+\, 1
	\end{equation}
	to conclude \eqref{Eq21} and \eqref{Eq22}.
	Combining \eqref{Eq134} and \eqref{Eq113}, we estimate 
	\begin{align}
		F_{\delta,\epsilon}(r) F_{\delta,\epsilon}''(r)\,\lesssim_n\, K_\epsilon^{n-2}(r)\,\le\, 
		|r|^{n-2}\,+\, \epsilon^{n-2}
		\,\overset{\eqref{Eq152}}{\le}\,|r|^{n-2}\,+\, 1,
	\end{align}
	as desired.
	The remaining \eqref{Eq23} follows from \eqref{Eq113}.
\end{proof}
{\color{OliveGreen}\section{It\^o expansion of the energy functional}\label{App_B}
In this second appendix we recall the  It\^o expansion of the energy of a martingale solution to \eqref{Eq108} in the sense of Definition \ref{defi_sol_non_degenerate} and its proof, as provided in \cite[Lemma 4.6]{dareiotis2021nonnegative}, with some additional details.
\begin{lemma}\label{Lemma_App_B}
	Any martingale solution
	\[
	\bigl\{
	(\hat{\Omega}, \hat{\mathfrak{A}}, \hat{\mathfrak{F}}, \hat{\PP}), \,(\hat{\beta}^{(k)})_{k\in \ZZ}, \uderh 
	\bigr\}
	\]
	to \eqref{Eq108} in the sense of Definition \ref{defi_sol_non_degenerate}  satisfies
	\begin{align}\begin{split}\label{Eq25b}
			&
			\tfrac{1}{2}\|\partial_x \uderh(t)\|_{L^2(\TT)}^2\,=\, 
			\tfrac{1}{2}\|\partial_x \uderh(0)\|_{L^2(\TT)}^2\,-\, \int_0^t \int_{\TT} F_{\delta,\epsilon}^2(\uderh)(\partial_x^3 \uderh)^2\,dx\, ds
			\\&\qquad +\, \tfrac{1}{2} \sum_{k\in \ZZ}\int_0^t \gamma_{\uderh}^2\int_{\TT}
			\sigma_k^2 (F_{\delta,\epsilon}''(\uderh))^2 (\partial_x \uderh)^4\, dx\, ds\\&\qquad
			+\,\tfrac{1}{16}\sum_{k\in \ZZ} \int_0^t \gamma_{\uderh}^2 \int_{\TT} (\partial_x (\sigma_k^2)) \bigl(
			(F_{\delta,\epsilon}^2)'''(\uderh)\,+\, 4 ((F_{\delta,\epsilon}')^2)'(\uderh)
			\bigr) (\partial_x \uderh)^3\,dx\,ds \\&\qquad+\,
			\tfrac{3}{2}
			\sum_{k\in \ZZ} \int_0^t \gamma_{\uderh}^2 \int_{\TT} ((\partial_x\sigma_k)^2 - \sigma_k (\partial_x^2\sigma_k)) (F_{\delta,\epsilon}'(\uderh))^2  (\partial_x \uderh)^2\,dx\,ds
			\\&\qquad+\,  			
			\tfrac{3}{16}\sum_{k\in \ZZ} \int_0^t \gamma_{\uderh}^2 \int_{\TT}
			(\partial_x^2(\sigma_k^2)) (F_{\delta,\epsilon}^2)''(\uderh)(\partial_x \uderh)^2
			\,dx\,ds
			\\&\qquad+\,
			\tfrac{1}{8}\sum_{k\in \ZZ}\int_0^t  \gamma_{\uderh}^2 \int_{\TT} (4\sigma_k\partial_x^4 \sigma_k -\partial_x^4(\sigma_k^2)) F_{\delta,\epsilon}^2(\uderh)\,dx\, ds \\&\qquad+\, \sum_{k\in \ZZ}\int_0^t  \gamma_{\uderh}
			\int_{\TT} 
			\sigma_k F_{\delta,\epsilon}(\uderh)\partial_x^3 \uderh
			\,dx
			\, d\hat{\beta}^{(k)},
		\end{split}
	\end{align}
	for all $t\in [0,T]$ with $\gamma_{\uderh} = g_R(\|\uderh\|_{C(\TT)})$.
\end{lemma}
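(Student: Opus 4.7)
The plan is to apply Itô's formula to the quadratic functional $\phi(u)=\tfrac{1}{2}\|\partial_x u\|_{L^2(\TT)}^2 = -\tfrac{1}{2}(u,\partial_x^2 u)_{L^2(\TT)}$, for instance using the version \cite[Proposition A.1]{DHV_16}. Its hypotheses are met thanks to Definition \ref{defi_sol_non_degenerate}: the solution lies in $L^2(\hat{\Omega}; C([0,T]; H^1(\TT)))$, $\hat{\EE}[\|F_{\delta,\epsilon}(\uderh)\partial_x^3\uderh\|_{L^2([0,T]\times \TT)}^2]<\infty$, and the polynomial growth bounds $F_{\delta,\epsilon}(r)\lesssim |r|^{n/2}+1$, $|F_{\delta,\epsilon}'(r)|\lesssim |r|^{n/2-1}+1$ ensure that the drift and diffusion maps from \eqref{Eq108} are well defined on the appropriate Gelfand triple and their duality pairings with $-\partial_x^2\uderh$ are integrable; the assumption \eqref{Eq120} lets one exchange the infinite sum over $k\in\ZZ$ with integration. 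Since $\phi$ is quadratic with constant Hessian $D^2\phi(u)(v,w) = (\partial_x v,\partial_x w)_{L^2(\TT)}$, Itô's formula reduces to
\[
d\phi(\uderh)\,=\, (-\partial_x^2\uderh,\,d\uderh)_{L^2(\TT)}\,+\, \tfrac{1}{2}\sum_{k\in\ZZ}\|\partial_x\Phi_k(\uderh)\|_{L^2(\TT)}^2\,dt,
\]
where $\Phi_k(u)=g_R(\|u\|_{C(\TT)})\partial_x(\sigma_k F_{\delta,\epsilon}(u))$ is the $k$-th noise coefficient.

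The four terms on the right-hand side of \eqref{Eq108} produce four contributions. The deterministic drift, after integrating by parts twice in $\int(-\partial_x^2\uderh)\cdot(-\partial_x(F_{\delta,\epsilon}^2(\uderh)\partial_x^3\uderh))\,dx$, gives exactly $-\int F_{\delta,\epsilon}^2(\uderh)(\partial_x^3\uderh)^2\,dx$. The Stratonovich correction drift, after a single integration by parts, yields
\[
I_1\,=\,\tfrac{1}{2}\sum_{k\in\ZZ}\gamma_{\uderh}^2\int_{\TT}\partial_x^3\uderh\cdot\sigma_k F_{\delta,\epsilon}'(\uderh)\partial_x(\sigma_k F_{\delta,\epsilon}(\uderh))\,dx.
\]
The quadratic-variation correction is directly
\[
I_2\,=\,\tfrac{1}{2}\sum_{k\in\ZZ}\gamma_{\uderh}^2\int_{\TT}\bigl(\partial_x^2(\sigma_k F_{\delta,\epsilon}(\uderh))\bigr)^2\,dx.
\]
Finally, the stochastic integral term becomes, again after one integration by parts, the martingale $\sum_k\int_0^t\gamma_{\uderh}\int_{\TT}\sigma_k F_{\delta,\epsilon}(\uderh)\partial_x^3\uderh\,dx\,d\hat{\beta}^{(k)}$, matching the last line of \eqref{Eq25b}.

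The crux is to show that $I_1+I_2$ equals the sum of the four deterministic lines of \eqref{Eq25b} indexed by the powers $(\partial_x\uderh)^4,(\partial_x\uderh)^3,(\partial_x\uderh)^2,(\partial_x\uderh)^0$. I would expand both via Leibniz, using
\begin{align}
&\partial_x(\sigma_k F_{\delta,\epsilon}(u))\,=\,(\partial_x\sigma_k)F_{\delta,\epsilon}(u)+\sigma_k F_{\delta,\epsilon}'(u)\partial_x u,\\
&\partial_x^2(\sigma_k F_{\delta,\epsilon}(u))\,=\,(\partial_x^2\sigma_k)F_{\delta,\epsilon}(u)+2(\partial_x\sigma_k)F_{\delta,\epsilon}'(u)\partial_x u+\sigma_k F_{\delta,\epsilon}''(u)(\partial_x u)^2+\sigma_k F_{\delta,\epsilon}'(u)\partial_x^2 u,
\end{align}
and then sort monomials by the power of $\partial_x\uderh$. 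The essential reduction is the elimination of the $\partial_x^3\uderh$ in $I_1$ by one further integration by parts: the factor $\partial_x^3\uderh\cdot\sigma_k F_{\delta,\epsilon}'(\uderh)\sigma_k F_{\delta,\epsilon}'(\uderh)\partial_x\uderh=\tfrac{1}{2}\partial_x^3\uderh\cdot\partial_x(\sigma_k^2)\cdot F_{\delta,\epsilon}(\uderh)F_{\delta,\epsilon}'(\uderh)$ style terms are transferred back onto the $\sigma_k^2$, producing $\partial_x^2(\sigma_k^2)$, $\partial_x^4(\sigma_k^2)$ and $\sigma_k\partial_x^4\sigma_k$ factors, while the chain rule turns $\partial_x^2\uderh$ times smooth$(\uderh)$-derivatives into $(\partial_x\uderh)^4$ via $F_{\delta,\epsilon}''$. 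Terms odd in $\partial_x\sigma_k$ are absorbed via $\partial_x(\sigma_k^2)=2\sigma_k\partial_x\sigma_k$; the $(F_{\delta,\epsilon}^2)''$ and $((F_{\delta,\epsilon}')^2)'$ combinations arise from pairing $F'F'$ and $F F''$ contributions through $(F^2)''=2(F')^2+2FF''$.

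The main obstacle is not conceptual but purely bookkeeping: tracking every cross term and verifying that the resulting rational coefficients are precisely $\tfrac{1}{2},\tfrac{1}{16},\tfrac{3}{2},\tfrac{3}{16},\tfrac{1}{8}$. Each integration by parts is justified by the $H^1$-continuity of $\uderh$ combined with the smoothness of $F_{\delta,\epsilon}$ and of the $\sigma_k$, and the infinite sums over $k\in\ZZ$ converge term-by-term under \eqref{Eq120}, which validates the exchanges of sum and integral used throughout. No analytic input beyond Itô's formula, the chain rule for $F_{\delta,\epsilon}$, and repeated integration by parts on $\TT$ (no boundary terms) is required.
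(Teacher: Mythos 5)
Your strategy coincides with the paper's: apply an infinite-dimensional Itô formula to $\tfrac12\|\partial_x\cdot\|_{L^2(\TT)}^2$, split the resulting expansion into the drift term $-\int F_{\delta,\epsilon}^2(\partial_x^3\uderh)^2\,dx$, the Stratonovich-correction term, the quadratic-variation term and the martingale, then expand the two correction terms by Leibniz and integration by parts, sorting by powers of $\partial_x\uderh$ and using $\partial_x(\sigma_k^2)=2\sigma_k\partial_x\sigma_k$ and $(\Fde^2)''=2(\Fde')^2+2\Fde\Fde''$ exactly as the paper (following \cite[Lemma 3.1]{dareiotis2021nonnegative}) does. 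The one inaccuracy is the reference for the Itô formula: \cite[Proposition A.1]{DHV_16} applies to functionals of the form $\int L(u)\,dx$ with scalar $L$, not to the $H^1$-seminorm; the paper instead justifies the step via \cite[Theorem 4.2.5]{liurock} on the Gelfand triple $\dot H^3(\TT)\subset\dot H^1(\TT)\subset\dot H^{-1}(\TT)$ (cf.\ \cite[Appendix C]{Sauerbrey_2021}), which is the framework your remark about "the appropriate Gelfand triple" is implicitly gesturing at.
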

\begin{proof}
	As pointed out at the beginning of this appendix, we argue as in \cite[Lemma 4.6]{dareiotis2021nonnegative}.
	It\^o's formula for the functional $\|\partial_x \cdot\|_{L^2(\TT)}^2$ can be justified, e.g., by applying \cite[Theorem 4.2.5]{liurock} on the Gelfand triple of homogeneous Sobolev spaces $\dot{H}^3(\TT)\subset \dot{H}^1(\TT)\subset \dot{H}^{-1}(\TT)$ as elaborated in \cite[Appendix C]{Sauerbrey_2021}. The required regularity requirements follow from  
	\begin{align}
		&
		\uderh \in L^2(\hat{\Omega}; C([0,T]; H^1(\TT))) \cap 
		 L^2(\hat{\Omega}\times [0,T]; H^3(\TT)),
	\end{align}
	as stated in Definition \ref{defi_sol_non_degenerate} together with a stopping time argument. Thus, employing  the conventions $u=\uderh$ and $\gamma_{u} = g_R(\|\uderh\|_{C(\TT)})$, we obtain that
	\begin{align}\begin{split}
			\label{Eq4}
		\frac{1}{2} \|\partial_x u(t)\|_{L^2(\TT)}^2 \,=&\, 
	\frac{1}{2} \|\partial_x u(0)\|_{L^2(\TT)}^2 \,-\, 
	\int_0^t   
	\int_{\TT} 
	F_{\delta,\epsilon}^2(u) (\partial_x^3 u)^2
	\, dx
	\, ds
	\\& +\, \frac{1}{2} \sum_{k\in \ZZ} \int_0^t \gamma_{u}^2 \int_{\TT} 
	(\partial_x u) \partial_x^2 (\sigma_k F_{\delta,\epsilon}'(u) \partial_x( \sigma_k F_{\delta,\epsilon}(u)))
	\,
	dx\, ds
	\\&+\,
	\frac{1}{2} \sum_{k\in \ZZ} \int_0^t \gamma_{u}^2 \int_{\TT} 
\bigl( \partial_x^2( \sigma_k F_{\delta,\epsilon}(u))\bigr)^2
	\,
	dx\, ds
	\\&
	+\sum_{k\in \ZZ} \int_0^t \gamma_{u}
	\int_{\TT} (\partial_x u ) \partial_x^2(\sigma_k F_{\delta,\epsilon}(u)) \, dx\, d\hat{\beta}^{(k)} \,=\, \mathcal{I}_1\,+\, \dots\,+\, \mathcal{I}_5.
		\end{split}
	\end{align}
	We already observe by integrating by parts two times in  $\mathcal{I}_5$, that $\mathcal{I}_1+\mathcal{I}_2+\mathcal{I}_5$ appears on the right-hand side of \eqref{Eq25b}. Hence, it suffices to show that $\mathcal{I}_3 +\mathcal{I}_4$ equals the remaining terms from \eqref{Eq25b} for which we follow \cite[Lemma 3.1]{dareiotis2021nonnegative}. 
	
	Firstly, we observe that
	\begin{align}&
		\int_{\TT} 
		(\partial_x u) \partial_x^2 (\sigma_k F_{\delta,\epsilon}'(u) \partial_x( \sigma_k F_{\delta,\epsilon}(u)))
		\,
		dx
		\\&\quad =\, -
		\int_{\TT} 
		(\partial_x^2 u) \partial_x \Bigl(\sigma_k^2 (\partial_x u)\bigl(F_{\delta,\epsilon}'(u)\bigr)^2   \,+\,
		\sigma_k (\partial_x \sigma_k) \bigl(F_{\delta,\epsilon}'(u) F_{\delta,\epsilon}(u)\bigr)
		\Bigr)
		\,
		dx
		\\& \quad =\,  -\int_{\TT} 
		\sigma_k^2 (\partial_x^2 u)^2
		(F_{\delta,\epsilon}')^2(u)\,+\, \sigma_k^2 (\partial_x^2 u) (\partial_x u)^2 \bigl((F_{\delta,\epsilon}')^2\bigr)'(u)
		\,+\, (\partial_x (\sigma_k^2)) (\partial_x^2 u) (\partial_x u )
		(F_{\delta,\epsilon}')^2(u)
		\, dx
		\\&
		\qquad -
		\frac{1}{4}\int_{\TT}
		( \partial_x (\sigma_k)^2) (\partial_x^2 u )(\partial_x u)
		(\Fde^2)'' (u) \,+\, (\partial_x^2 (\sigma_k^2) ) (\partial_x^2 u )
		(\Fde^2)' (u)
		\, dx
		\\&\quad =\,
		- \,\int_{\TT} \sigma_k^2 (\partial_x^2 u)^2
		(F_{\delta,\epsilon}')^2(u)\, dx \,+\, \frac{1}{3}\int_{\TT}  \sigma_k^2 (\partial_x u)^4 \bigl((F_{\delta,\epsilon}')^2\bigr)''(u)
		\,+\,  (\partial_x (\sigma_k)^2)  (\partial_x u)^3 \bigl((F_{\delta,\epsilon}')^2\bigr)'(u)
		\, dx
		\\&\qquad +\frac{1}{2}\int_{\TT} 
		(\partial_x (\sigma_k^2))(\partial_x u)^3 \bigl((F_{\delta,\epsilon}')^2\bigr)'(u) \,+\, (\partial_x^2 (\sigma_k^2))(\partial_x u)^2 (F_{\delta,\epsilon}')^2(u)
		\,
		dx
			\\&\qquad +\frac{1}{8} \int_{\TT} 
		(\partial_x (\sigma_k^2))(\partial_x u)^3 (F_{\delta,\epsilon}^2)'''(u) \,+\, (\partial_x^2 (\sigma_k^2))(\partial_x u)^2 (F_{\delta,\epsilon}^2)''(u)
		\,
		dx
		\\& \qquad +  \frac{1}{4}\int_{\TT} (\partial_x ^2(\sigma_k^2)) (\partial_x u)^2 (\Fde^2)''(u) \, dx
		\,-\, \frac{1}{4} \int_{\TT} 
		(\partial_x^4 (\sigma_k^2)) F_{\delta,\epsilon}^2(u) 
		\, dx,
	\end{align}
	by applying the product rule and integrating by parts.	The latter is justified $\check{\PP}\otimes dt$-almost everywhere, where $u\in H^3(\TT)$, since $F_{\delta,\epsilon}\colon \RR\to \RR$ is smooth. After rearranging all these terms, we arrive at
	\begin{align}\begin{split}\label{Eq2}
			&
			\int_{\TT} 
		(\partial_x u) \partial_x^2 (\sigma_k F_{\delta,\epsilon}'(u) \partial_x( \sigma_k F_{\delta,\epsilon}(u)))
		\,
		dx
		\\&\qquad =
		\int_{\TT} 
		\sigma_k^2 \Bigl(
		 - 
		(F_{\delta,\epsilon}')^2(u)(\partial_x^2 u)^2\,+\, \tfrac{1}{3}  \bigl((F_{\delta,\epsilon}')^2\bigr)''(u)(\partial_x u)^4
		\Bigr)
		\, dx
		\\&\qquad+\int_{\TT}
		(\partial_x (\sigma_k^2)) \Bigl(
		\tfrac{1}{8} (F_{\delta,\epsilon}^2)'''(u)\,+\, \tfrac{5}{6}\bigl((F_{\delta,\epsilon}')^2\bigr)'(u)
		\Bigr) (\partial_x u)^3
		\,dx
		\\&\qquad+\int_{\TT}(
		\partial_x^2(\sigma_k^2)) \Bigl(
		\tfrac{1}{2} (F_{\delta,\epsilon}')^2(u)\,+\, \tfrac{3}{8}  (\Fde^2)''(u)
		\Bigr)(\partial_x u)^2
		dx
		\\&\qquad
		-\frac{1}{4}\int_{\TT}
		(\partial_x^4 (\sigma_k^2)) F_{\delta,\epsilon}^2(u)\,
		dx.
		\end{split}
	\end{align}

	Secondly, we use that 
	\begin{align}\begin{split}\label{Eq5}&
		\bigl(\partial_x^2 (\sigma_k \Fde (u))\bigr)^2 
		\\&\quad  =\, 
		\bigl(
		(\partial_x^2 \sigma_k) F_{\delta,\epsilon}(u) \,+\, 2(\partial_x \sigma_k)(\partial_x u)\Fde'(u)
		\,+\, \sigma_k  (\partial_x u)^2 \Fde''(u)
		\,+\, \sigma_k (\partial_x^2 u)\Fde'(u)
		\bigr)^2
		\\&\quad =\, 
		(\partial_x^2 \sigma_k)^2\Fde^2(u) \,+\, 4(\partial_x\sigma_k)^2(\partial_x u)^2 (\Fde')^2(u) 
		\,+\, \sigma_k^2 (\partial_xu)^4(\Fde''(u))^2 
		\,+\, \sigma_k^2 (\partial_x^2 u)^2(\Fde'(u))^2 
		\\&\qquad +\, 2\Bigl(
		\tfrac{1}{2}(\partial_x (\partial_x \sigma_k)^2) (\partial_x u)(\Fde^2)'(u)\,+\, (\partial_x^2 \sigma_k)\sigma_k  (\partial_x u)^2 (\Fde\Fde'')(u)
		\,+\, \tfrac{1}{2} (\partial_x^2 \sigma_k)\sigma_k (\partial_x^2 u)(\Fde^2)'(u)
		\Bigr)
		\\&\qquad +\, 2\Bigl(
		\tfrac{1}{2}(\partial_x (\sigma_k^2)) (\partial_x u)^3\bigl((\Fde')^2\bigr)'(u)\,+\, \tfrac{1}{2} (\partial_x (\sigma_k^2)) (\partial_x (\partial_x u)^2 )(\Fde')^2(u)
		\Bigr)
		\\&\qquad +\, 2 \Bigl(\tfrac{1}{6}\sigma_k^2
		 \partial_x (\partial_x u)^3 \bigl((\Fde')^2\bigr)'(u)
		 \Bigr) . 
		 \end{split}
	\end{align}
	Integrating the first line of the right-hand side yields
	\begin{align}
		\int_{\TT} (\partial_x^2\sigma_k)^2 \Fde^2(u) \,+\, 4(\partial_x\sigma_k)^2(\partial_x u)^2 (\Fde')^2(u) 
		\,+\, \sigma_k^2 (\partial_xu)^4(\Fde''(u))^2 
		\,+\, \sigma_k^2 (\partial_x^2 u)^2(\Fde'(u))^2 \, dx.
	\end{align}
	For the second line of the right-hand side of \eqref{Eq5}, we deduce 
		\begin{align}&
		\int_{\TT}
		2\Bigl(
		\tfrac{1}{2}(\partial_x (\partial_x \sigma_k)^2) (\partial_x u)(\Fde^2)'(u)\,+\, (\partial_x^2 \sigma_k)\sigma_k  (\partial_x u)^2 (\Fde\Fde'')(u)
		\,+\, \tfrac{1}{2} (\partial_x^2 \sigma_k)\sigma_k (\partial_x^2 u)(\Fde^2)'(u)
		\Bigr)
		\, dx
		\\&\quad =\, \int_{\TT} 
		\bigl(\tfrac{1}{2}\partial_x (\partial_x \sigma_k)^2\, -\,( \partial_x^3 \sigma_k )\sigma_k\bigr) (\partial_x u)(\Fde^2)'(u)
	 \,-\, 
		(\partial_x^2 \sigma_k)\sigma_k (\partial_x u)^2 (\Fde^2)''(u)
		\, dx
		\\&\qquad  +\, \int_{\TT} 2 (\partial_x^2 \sigma_k)\sigma_k  (\partial_x u)^2 (\Fde\Fde'')(u)  \, dx
		\\&\quad =\, -\int_{\TT}\bigl((\partial_x^2 \sigma_k )^2\,-\, (\partial_x^4 \sigma_k) \sigma_k\bigr)   \Fde^2(u)\, dx - 2\int_{\TT} 
		(\partial_x^2 \sigma_k)\sigma_k (\partial_x u)^2 (\Fde')^2(u)
		\, dx,
	\end{align}
	by integration by parts.
	For the third line of the right-hand side of \eqref{Eq5}, we obtain instead that
	\begin{align}&
		\int_{\TT}  2\Bigl(
		\tfrac{1}{2}(\partial_x (\sigma_k^2)) (\partial_x u)^3\bigl((\Fde')^2\bigr)'(u)\,+\, \tfrac{1}{2} (\partial_x (\sigma_k^2)) (\partial_x (\partial_x u)^2 )(\Fde')^2(u)
		\Bigr)\, dx
		\\& \quad =\, -\int_{\TT}
		(\partial_x^2 (\sigma_k^2)) (\partial_x u)^2 (\Fde')^2(u)
		\, dx
		\\&
		\quad =\,-\,2\int_{\TT}
		\bigl((\partial_x \sigma_k)^2 \,+\,  (\partial_x^2 \sigma_k)\sigma_k \bigr) (\partial_x u)^2 (\Fde')^2(u)
		\, dx.
	\end{align}
	Integration by parts in the last line of \eqref{Eq5} yields
	\begin{align}&
		\int_{\TT}
		2 \Bigl(\tfrac{1}{6}\sigma_k^2
		\partial_x (\partial_x u)^3 \bigl((\Fde')^2\bigr)'(u)
		\Bigr) \, dx \\&\quad =\, - \frac{1}{3}\int_{\TT} \sigma_k^2 (\partial_x u)^4 \bigl((\Fde')^2\bigr)''(u)\,+\, 
		 (\partial_x (\sigma_k^2)) (\partial_x u)^3 \bigl((\Fde')^2\bigr)'(u)
		\, dx.
	\end{align}
	Summing everything up and rearranging, results in 
		\begin{align}\begin{split}\label{Eq3}
				&
		\int_{\TT}
		\bigl(\partial_x^2 (\sigma_k \Fde (u))\bigr)^2 
		\, dx
		\\&\quad  =\, \int_{\TT}
		\sigma_k^2 \Bigl(  (\Fde'(u))^2(\partial_x^2 u)^2  \,+\, 
		(\Fde''(u))^2(\partial_xu)^4 
		\,-\, \tfrac{1}{3} \bigl((\Fde')^2\bigr)''(u)(\partial_x u)^4
		\Bigr)
		\, dx
		\\&\qquad 
		-\, \frac{1}{3}\int_{\TT} (\partial_x (\sigma_k^2))  \bigl((\Fde')^2\bigr)'(u)(\partial_x u)^3\, dx
		\\&\qquad +2 \int_{\TT} \bigl((\partial_x \sigma_k )^2 -2(\partial_x^2 \sigma_k)\sigma_k \bigr)
		 (\Fde')^2(u)(\partial_x u)^2
		\, dx
		\\&\qquad 
		+\, \int_{\TT} (\partial_x^4 \sigma_k)  \sigma_k \Fde^2(u)\, dx.
			\end{split}
	\end{align}
	Finally, we obtain the desired identity \eqref{Eq25b} by inserting \eqref{Eq2} and \eqref{Eq3} in \eqref{Eq4}.
\end{proof}

}

	\section*{Acknowledgements}The author thanks the anonymous referees for their detailed suggestions to improve this manuscript.
	He thanks his doctoral advisor Manuel V. Gnann for regular discussions on the
	subject as well as a careful reading of this manuscript. He also thanks Konstantinos Dareiotis and Benjamin Gess for discussing the main idea of this article.

\end{document}